\title[Unique Ergodicity for  Fractionally Dissipative 2D Euler]{Unique Ergodicity for Fractionally Dissipated, Stochastically Forced 2D Euler Equations \footnote{ {\em Date:} \today} }
\author{Peter Constantin}
\address{Department of Mathematics, Princeton University, Princeton, NJ 08544}
\email{const@math.princeton.edu}
\author{Nathan Glatt-Holtz}
\address{Institute for Mathematics and Applications, University of Minnesota, Minneapolis, MN 55455}
\email{negh@ima.umn.edu}
\author{Vlad Vicol}
\address{Department of Mathematics, Princeton University, Princeton, NJ 08544}
\email{vvicol@math.princeton.edu}
\theoremstyle{plain}
\newtheorem{theorem}{Theorem}[section]
\newtheorem*{theorem*}{Theorem}
\newtheorem{definition}[theorem]{Definition}
\newtheorem{lemma}[theorem]{Lemma}
\newtheorem{proposition}[theorem]{Proposition}
\theoremstyle{definition}
\newtheorem{remark}[theorem]{Remark}
\newcommand{\LinI}{\mathcal{J}}
\newcommand{\Kon}{\mathcal{K}}
\newcommand{\LinN}{\mathcal{A}}
\newcommand{\DM}{\mathfrak{D}}
\newcommand{\DDM}{\mathbb{D}}
\newcommand{\ddt}{\frac{d}{dt}}
\renewcommand{\tilde}{\widetilde}
\def\BB{{\mathcal B}}
\def\PP{{\mathcal P}}
\def\ZZ{\mathbb{Z}}
\def\RR{\mathbb R}
\def\LL{{\mathbb L}}
\def\HH{{\mathbb H}}
\newcommand{\pd}[1]{\partial_{#1}}
\newcommand{\indFn}[1]{1 \! \! 1_{#1}}
\newcommand{\E}{\mathbb{E}}
\newcommand{\Prb}{\mathbb{P}}
\def\ZZF{ {\mathcal Z}}
\def\intint{\int\!\!\!\int}
\newcommand{\Vort}{\omega}
\newcommand{\bfU}{\boldsymbol{u}}
\newcommand{\TT}{\mathbb{T}^2}
\newcommand{\II}{\mathcal{I}}
\renewcommand{\phi}{\varphi}
\newcommand\eps{\varepsilon}
\newcommand\poly{ {\mathcal P} }
\newcommand\expo{ {\mathcal E} }
\begin{document}

%%%%%%%%%%%%%%%%%%%%%%%%% THE ABSTRACT %%%%%%%%%%%%%%%%%%%%%%%%%%%%%%%%%%%

\begin{abstract}
We establish the existence and uniqueness of an ergodic invariant measure for 
2D fractionally dissipated stochastic Euler equations on the periodic box, for \emph{any} power of the dissipation term.
\end{abstract}

%%%%%%%%%%%%%%%%%%%%%%%% Classification and Keywords %%%%%%%%%%%%%%%%%%%%
\subjclass[2010]{}
\keywords{}

\maketitle

%%%%%%%%%%%%%%%%%%%%%%%%% The Main Part %%%%%%%%%%%%%%%%%%%%%%%%%%%%%%%%%%
\section{Introduction}
\label{sec:Intro}

Because of the combined effects of rapid rotation and small aspect ratio,
much of large scale atmospheric turbulence is dominated by two dimensional
dynamics. In this setting, the role of molecular dissipation is negligible, but other forms of dissipation do exist~\cite{HeldSQG95,HeldEtAll00,PauluisHeld02,HeldEtAll02}. Two dimensional turbulence has been extensively studied theoretically~\cite{Kraichnan67, Batchelor69,RobertSommeria92,Falkovich94,Constantin97,FoiasJollyManleyRosa02}, experimentally~\cite{Tabeling97,Tabeling98,Bodenschatz02,Swinney02} and numerically~\cite{PumirEtAl00,CastiglionePumir01}. See also the reviews~\cite{Frisch95,Tabeling02} and references therein. In such forced dissipative systems a common approach, both numerically and theoretically, is to use a frequency-localized stationary gaussian (white in time) stochastic process as a proxy for ``generic'' energy injection, see e.g.~\cite{Novikov1965,FrischSulem84,Eyink96,CIME08,BouchetSimonet09,KuksinShirikian12}.

The simplest form of dissipation, wave-number independent friction, leads to the damped-driven Euler equations~\cite{Benard00,ConstantinRamos07}. 
Unfortunately, the ergodic theory for the stochastically forced damped-driven Euler equations seems to be far from reach at the moment: this is in part due to the lack of compactness or continuous dependence in a suitable Polish space.  The natural space for 2D Euler, $L^{\infty} \cap L^1$ vorticity, is ill-suited to study the ergodic theory for SPDEs in the Markovian framework with the existing tools.
 
Weak wave-number dependence in the dissipation can be viewed as remedy for the difficulties encountered with damped and driven Euler equations. Our goal here is to address the question of what is the lowest power of dissipation in the fractionally dissipated Euler equations that allows the development of a rigorous ergodic theory. In the case of very weak wave-number dependence this question turns out to be quite non-trivial. Recently, the use of fractional dissipation as a regularizing term in models arising in fluid mechanics has become quite common, see e.g.~\cite{ConstantinCordobaWu01,Wu02b,KiselevNazarovVolberg07,CastroCordobaGancedoOrive09,CaffarelliVasseur10,Kiselev10,CaffarelliChanVasseur11,HKR11,ConstantinVicol12,ChaeWu12} and references therein. 

In this work we establish the existence and uniqueness of an ergodic invariant measure for the fractionally dissipative 2D Euler equation in vorticity form
\begin{align}
  &d \Vort + (\Lambda^\gamma \Vort + \bfU \cdot \nabla \Vort)dt = \sigma dW, \notag\\
  &\bfU = K \ast \Vort, \label{eq:frac:SNSE}\\
  & \Vort(0) = \Vort_0,\notag
\end{align}
where $\Lambda^\gamma = (- \Delta)^{\gamma/2}$ is the fractional Laplacian, and $\gamma$ is allowed to take {\em any} value in $(0,2]$.
Here $K$ is the Biot-Savart kernel, so that $\nabla^\perp \cdot \bfU = \Vort$ and $\nabla \cdot \bfU = 0$. The equations evolve on the periodic box $\TT = [-\pi , \pi]^2$.  The noise is white in time, colored in space, and degenerate, in the sense that it is supported on only finitely many Fourier modes.
This work is part of a larger goal to understand inviscid limits for weakly dissipated, stochastically forced Euler equations and related systems, in the class of invariant measures.  See also \cite{EKMS00, Kuksin2004, ConstantinRamos07, BouchetSimonet09, Kupiainen10, KuksinShirikian12, GlattHoltzSverakVicol2013} and containing references.

There exists a fairly well-developed ergodic theory of the stochastic Navier-Stokes equations in two dimensions.  As far as we know, the study of stochastic Navier-Stokes goes back to the 1960's~\cite{Novikov1965}, with the rigorous mathematical framework initially developed by~\cite{BensoussanTemam1972,VishikKomechFusikov1979,Cruzeiro1}. The ergodic theory for 2D stochastic NSE, and other nonlinear SPDEs, was initiated by \cite{FlandoliMaslowski1, ZabczykDaPrato1996} around the Doob-Khasminskii theorem.  
 This setting requires finite time smoothing of the Markov semigroup (the Strong Feller property) and a strong form of irreducibility.
 As such, these initial works required a very non-degenerate noise structure, that is stochastic forcing in all Fourier modes.
Following these pioneering works,  a number of authors have addressed the case of increasingly degenerate stochastic forcing~\cite{Ferrario99, Mattingly1,Mattingly2, BricmontKupiainenLefevere2001, EMattinglySinai2001, EchmannHairer01, KuksinShirikyan1,KuksinShirikyan2, MasmoudiYoung02,Mattingly03}. These authors realized the essential role played by Foias-Prodi-type estimates (determining modes)~\cite{FoaisProdi67} for obtaining ergodicity in nonlinear SPDEs. More recently, in a series of works \cite{MattinglyPardoux1, HairerMattingly06, HairerMattingly2008,HairerMattingly2011}
the unique ergodicity and mixing properties of the stochastic Navier-Stokes equations have been established for a class of very
degenerate (hypoelliptic) stochastic forcings.  In particular these authors introduced a notion of time asymptotic smoothing for 
the Markov semigroup and connected this property with unique ergodicity.   We will make central use of this ``asymptotic strong
Feller'' property here. For further recent developments and background on the ergodic theory of nonlinear SPDEs we refer the reader
to \cite{Kupiainen10, Debussche2011a, KuksinShirikian12} and references therein. 
The time asymptotic and statistically stationary behavior of the damped stochastic Euler has been studied 
in e.g. \cite{BessaihFlandoli2000,BessaihFerrario12, GlattHoltzSverakVicol2013}, but only in the context of weak solutions both in the PDE and probabilistic sense, which is far from the Markovian framework used here.

If the dissipation's wave number dependence is strong enough, i.e. for 
$\gamma \in (1,2)$, the argument in \cite{HairerMattingly06} appears to go through without major new ideas. The reason is that $\Lambda^\gamma$ is smoothing by $\gamma$ derivatives, while $ u \cdot \nabla \Vort  =  \nabla \cdot (u \Vort)$ has a one derivative loss.  Moreover, at the technical level, when $\gamma \in (1,2)$ one can simply work with the phase space $L^2$, where we have existence, uniqueness, and continuous dependence on data for the SPDE. Therefore, one can show the Markovian semigroup is Feller, obtain the needed exponential moment bounds, and the asymptotic strong Feller property, all in the $L^2$ phase space.

On the other hand, the case $\gamma \in (0,1]$ is hard for the following reasons: It appears from the above naive derivative counting that the case $\gamma \in (0,1]$ requires a new idea in order to appeal to Foias-Prodi-type arguments. No continuous dependence on data in the $L^2$ phase space is available, and even uniqueness might fail in $L^2$ for \eqref{eq:frac:SNSE}, so the Markovian framework breaks down in this space. To make sure we have uniqueness in the SPDE, we work in the phase space $H^r$ with $r>2$. The essential challenge now is that there is no cancellation property for the nonlinear term in $H^r$, and so obtaining moments becomes a highly non-trivial task. Moments are used extensively throughout the analysis: usually polynomial moments (with at most linear time-growth) are used to obtain the existence of invariant measures, while exponential moments are used essentially in obtaining the uniqueness.  Furthermore, due to this lack of cancellation in $H^r$, even establishing the Feller property is not trivial.

Our new ideas, which allow us to overcome the above mentioned technical difficulties are as follows. We developed a way to use the inherent parabolic smoothing in the equation, combined with a stopping time argument, to obtain the Feller property. The smoothing also takes care (after an arbitrarily small transient smoothing time) of the problem with the naive derivative counting.  In order to obtain estimates on the gradient of the Markov semigroup, we need to make use of exponential moments. Since we do not have them in $H^r$, we first need to address the control equation for the velocity fields in $L^2$, which only require the available $L^p$ exponential moments. We then use the instantaneous smoothing and interpolation to obtain the needed decay estimates in $H^r$. Parabolic smoothing may thus be also viewed as a tool to bootstrap available moment information from $L^p$ to $H^r$. Our exponential moment estimates make use of a sharpening of a lower bound on the dissipation term in $L^{p}$ from \cite{CordobaCordoba2004}.  This appears to be new, and may also be of some independent interest.

\subsection{Mathematical Setting and Assumptions on the Noise}
\label{sec:Setting}
The size of the periodic domain is chosen so that for convenience the lowest eigenvalue of $-\Delta$, and hence of the fractional Laplacian $\Lambda^\gamma$, is equal to $1$.  
Lebesgue spaces are denoted as usual by $L^p$ with $1\leq p \leq \infty$. For any $s > 0$ we shall denote by $H^{s}$ as the Hilbert space of mean zero elements $\Vort \in L^{2}(\TT)$ such that $\| \Lambda^{s} \Vort\|^{2}_{L^{2}} 
< \infty$.

Throughout this manuscript  fix $r>2$. We will be considering pathwise, that is probabilistically strong, solutions. Hence we fix a stochastic basis ${\mathcal S} = ( \Omega, {\mathcal F},  \Prb, \{ {\mathcal F}_t \}_{t\geq 0}, W)$. 
The noise term in \eqref{eq:frac:SNSE} is given explicitly as 
\begin{align} 
\sigma dW = \sum_{k \in \ZZF} \sigma_{k}(x) dW^{k}(t)
\label{eq:sigma:Def}
\end{align}
where we denote the set of forced modes by $\ZZF \subset {\mathbb Z}_0^2$. 
We will use the notation that for $s \geq 0$
\begin{align*}
 \|\sigma\|_{\HH^s}^2 := \sum_{k \in \ZZF} \| \sigma_k \|_{H^s}^2  
\end{align*}
Similarly we adopt for any $p \geq 2$,
\begin{align*}
   \|\sigma\|_{\LL^p}^p =  \int_{\TT} \Bigl( \sum_{k \in \ZZF} | \sigma_{k}(x) |^{2} \Bigr)^{p/2} dx.
\end{align*}

We assume for simplicity that for $|\ZZF| < \infty$ and $\ZZF$ is symmetric with respect to $k \mapsto -k$.
Also, to simplify the exposition we  consider the following explicit structure\footnote{For Proposition~\ref{prop:well-posedness} and Theorems~\ref{thm:smoothing}--\ref{thm:existence} we in fact do not require any  other assumptions on the noise term, except for $\| \sigma \|_{\HH^{s}} < \infty$ with $s=r+2$. In fact, these results hold with more general, possibly state-dependent (multiplicative) noise.} for the $\sigma_k$. As in \cite{HairerMattingly06}, let  $\{ q_{k}\}_{k \in \ZZF}$ be a collection of non-zero real numbers and let $e_k(x) = \sin(k \cdot x)$ for $k \in \ZZF \cap \ZZ_+^2$ and $e_k(x) = \cos (k \cdot x)$ for $k \in \ZZF \cap \ZZ^2_-$, where $\ZZ^2_+ = \{ k =(k_1,k_2) \in \ZZ^2 \colon k_2 > 0, \mbox{ or } k_2 = 0 \mbox{ and } k_1 >0\}$, and $\ZZ^2_- = - \ZZ^2_+$. We then let 
\begin{align}
\sigma_k(x) = q_k e_k(x)
\label{eq:NoiseCond2}
\end{align}
for any $k \in \ZZF$. For many of the below results we need $\ZZF$ to contain a sufficiently large ball around the origin in $\ZZ^2$. That is, we assume there exists a sufficiently large  integer $N >0$ such that 
\begin{align*}
\{ k \in \ZZ^2 \colon 0 < |k| \leq N \} \subset \ZZF.
\end{align*}

\subsection{Well-posedness and Markovian Framework}
For initial data $\Vort_0 \in H^r$, it may be shown
that \eqref{eq:frac:SNSE} has a unique global in time probabilistically strong, i.e. pathwise, solution in $H^r$.   More precisely we have the following
well-posedness result:
\begin{proposition}[\bf Well-posedness]
\label{prop:well-posedness}
  Fix a stochastic basis ${\mathcal S} = ( \Omega, {\mathcal F},  \Prb, \{ {\mathcal F}_t \}_{t\geq 0}, W)$ and consider any 
  $r > 2, \gamma > 0$.  Suppose, for simplicity that $\sigma$ takes the form \eqref{eq:NoiseCond2}. Then, for any $\Vort_{0} \in H^{r}$ there exists a unique $\Vort = \Vort(t, \Vort_{0}) =  \Vort(t, \Vort_{0}, \sigma W)$ satisfying \eqref{eq:frac:SNSE},
  in the time integrated sense and with the regularity
  \begin{align*}
	\Vort \in L^{\infty}([0,\infty);H^{r}) \cap L^{2}_{loc}([0,\infty); H^{r+ \gamma/2})
  \end{align*} 
  almost surely.
\end{proposition}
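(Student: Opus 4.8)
The plan is to remove the stochastic forcing by subtracting the Ornstein--Uhlenbeck process associated to the linear part of \eqref{eq:frac:SNSE}, thereby reducing the equation to a random PDE that can be solved pathwise. Because $\sigma$ is supported on finitely many Fourier modes and is smooth in space, the stochastic convolution
\[
z(t) = \int_0^t e^{-(t-s)\Lambda^\gamma}\,\sigma\,dW(s)
\]
is, for $\Prb$-almost every realization, continuous in time with values in $H^s$ for every $s \geq 0$; in particular its spatial derivatives are bounded uniformly on compact time intervals. Setting $v = \Vort - z$ and $\bfU = K \ast (v+z)$, the remainder solves, pathwise,
\begin{align}
\pd{t} v + \Lambda^\gamma v + \bfU \cdot \nabla v + \bfU \cdot \nabla z = 0, \qquad v(0) = \Vort_0, \label{eq:plan:v}
\end{align}
a deterministic equation whose only forcing, $\bfU \cdot \nabla z$, is lower order and smooth for each fixed path of $z$. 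It then suffices to produce, for a.e. fixed path, a unique global solution $v \in L^\infty_{loc} H^r \cap L^2_{loc} H^{r+\gamma/2}$, since adaptedness and measurability are inherited from the construction.

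For local existence I would run a Galerkin scheme for \eqref{eq:plan:v} and derive the $H^r$ energy estimate by applying $\Lambda^r$, pairing with $\Lambda^r v$ in $L^2$, and using that $\bfU$ is divergence free so that $\langle \bfU \cdot \nabla \Lambda^r v, \Lambda^r v\rangle = 0$. The dissipation contributes the coercive term $\|v\|_{H^{r+\gamma/2}}^2$, which furnishes the claimed $L^2_{loc} H^{r+\gamma/2}$ regularity, while the commutator $[\Lambda^r, \bfU \cdot \nabla]v$ is handled by a Kato--Ponce inequality. Since $r>2$ gives the embeddings $H^{r-1} \hookrightarrow W^{1,\infty}$ together with the Biot--Savart gain $\|\bfU\|_{H^{s+1}} \lesssim \|\Vort\|_{H^s}$, one arrives at a bound of the form $\tfrac{d}{dt}\|v\|_{H^r} \lesssim \|\nabla \bfU\|_{L^\infty}\|v\|_{H^r} + (\text{smooth quantities built from } z)$, which closes a Riccati-type inequality and yields a unique solution on a time interval depending only on $\|\Vort_0\|_{H^r}$ and the path norm of $z$. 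Uniqueness is obtained separately at the $L^2$ level: the difference $w$ of two solutions satisfies a linear transport--dissipation equation with no stochastic term, and since $\|K \ast w\|_{L^2} \lesssim \|w\|_{L^2}$ and $\|\nabla \Vort_i\|_{L^\infty} \lesssim \|\Vort_i\|_{H^r}$, a Grönwall argument forces $w \equiv 0$.

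The main obstacle is promoting this local solution to a global one: the $H^r$ estimate is only superlinear, because a priori $\|\nabla \bfU\|_{L^\infty}$ can be dominated only by $\|v\|_{H^r}$-type quantities, and for small $\gamma$ the dissipation $\|v\|_{H^{r+\gamma/2}}^2$ is too weak to absorb the nonlinearity directly. The remedy is to exploit the transport structure at the $L^p$ level. Testing \eqref{eq:plan:v} against $|v|^{p-2}v$, the transport term vanishes by incompressibility, the dissipation term is nonnegative by the Córdoba--Córdoba inequality \cite{CordobaCordoba2004}, $\int_{\TT} |v|^{p-2}v\, \Lambda^\gamma v\, dx \geq 0$, and only the smooth forcing $\int |v|^{p-2}v\, \bfU \cdot \nabla z\, dx$ remains. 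Passing $p \to \infty$ yields an $L^\infty$ maximum-principle bound on $v$, hence on $\Vort$, that grows controllably on any finite interval. Feeding this control, together with a Beale--Kato--Majda logarithmic inequality bounding $\|\nabla \bfU\|_{L^\infty}$ by $\|\Vort\|_{L^\infty}$ and $\log(e + \|v\|_{H^r})$, back into the $H^r$ estimate turns the superlinear bound into a log-Grönwall inequality whose solution cannot blow up in finite time. The standard continuation criterion --- that a maximal $H^r$ solution with $T^\ast < \infty$ must have $\int_0^{T^\ast} \|\nabla \bfU\|_{L^\infty}\,dt = \infty$ --- then contradicts the a priori bound, yielding global existence and completing the proof.
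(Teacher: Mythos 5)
Your proposal is correct and follows essentially the same route the paper indicates: the paper disposes of this proposition by the change of variable $\bar{\Vort} = \Vort - \sigma W$ followed by ``standard methods'' for 2D Euler/Navier--Stokes, which is precisely the pathwise reduction, $H^r$ energy/commutator estimate, $L^p$ maximum principle via the C\'ordoba--C\'ordoba positivity, and Beale--Kato--Majda log-Gr\"onwall continuation argument you outline. The only cosmetic difference is that you subtract the Ornstein--Uhlenbeck convolution $z$ rather than $\sigma W$ itself; since $\sigma$ is supported on finitely many smooth modes, both shifts yield an equally tractable random PDE.
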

By making use of the change of variable $\bar{\Vort} = \Vort - \sigma W$, this existence and uniqueness result can 
be established using standard methods in a similar fashion to the 2D Euler/Navier-Stokes equation.  
See e.g. \cite{ConstantinFoias1988,MajdaBertozzi2002}.  Note however 
that the local and global existence for strong, pathwise solutions of the 2D and 3D Euler equations has been treated in a much more general setting, with multiplicative noise and in the presence of boundaries in \cite{GlattHoltzVicol2011}, 
and see also the references therein.  See e.g. \cite{Rozovskii1990, ZabczykDaPrato1992, PrevotRockner2007} for more on the general well-possedness theory for SPDE.

On the other hand, the fractional dissipation term present in \eqref{eq:frac:SNSE} leads to smoothing properties which
we would not expect from the damped Euler equation.  We will show in Theorem~\ref{thm:smoothing} 
that solutions smooth to an arbitrary  degree regularity after an arbitrarily short time.  With this smoothing effect we show in Proposition~\ref{prop:cont:dep} that solutions dependent continuously on initial conditions in the $H^{r}$ topology.

With this basic well-posedness in hand we may associate Markov transition functions to \eqref{eq:frac:SNSE} by
defining
\begin{align}
  P_t(\Vort_0, A) = \Prb( \Vort(t, \Vort_0) \in A) \quad \mbox{ for any } t \geq 0, A \in \mathcal{B}(H^r).
  \label{eq:Markov:trans:fn}
\end{align}
This defines the Markovian semigroup, also denoted $\{P_t\}_{t \geq 0}$
\begin{align}
P_t \phi(\cdot) = \E \phi(\Vort(t,\cdot)) = \int_{H^r} \phi(\Vort_0) P_t(\cdot,d\Vort_0),
\quad \mbox{ for any } \phi \in \mathcal{M}_b(H^r),
\label{eq:Markov:semigroup}
\end{align}
where $\mathcal{M}_b(H^r)$ denotes the collection of bounded, real valued,
Borel measurable functions mapping from $H^r$.  We will denote $C_b(H^r)$ to be 
the collection of continuous real valued functions mapping from $H^r$.  We will show 
in Section~\ref{sec:Feller:existence} below, that $\{P_t\}_{t \geq 0}$ is \emph{Feller}
meaning that $P_t$ maps $C_b(H^r)$ into $C_b(H^r)$ for every $t \geq 0$.
Let us recall that for any Borel probability measure $\mu$, the dual semigroup $P_t^*$ acts as
\begin{align*}
P_t^\ast \mu (\cdot) = \int_{H^r} P_t(\Vort_0,\cdot) d \mu(\Vort_0).
\end{align*}
Note that $P_t^\ast$ may be defined to act on any finite signed Borel measure $\mu$.
Then $\mu \in \mbox{Pr}(H^r)$ is an {\em invariant measure} for $\{P_t\}_{t\geq 0}$ if $P_t^\ast \mu = \mu$ for all $t\geq 0$.

\subsection{Notation}
For the sake of readability, throughout this manuscript we shall adopt the following notational conventions. All constants are deterministic and independent of time.

\begin{itemize}
 \item[(i)] $C$ shall denote a sufficiently large positive constant that depends on  $r,\gamma$, and on the constants arising in the Sobolev, Poincar\'e, Burkholder-Davis-Gundy, and other inequalities. The value of $C$ may change from line to line. When the constant $C$ depends on other parameters $\lambda$, we shall explicitly remind the reader of this dependance by writing $C(\lambda)$.
 \item[(ii)] $\poly(x)$ shall denote a polynomial of the type $1 + x^{q}$, where the degree of the polynomial is suppressed in the notation. We shall also write $\poly(x,y)$ to denote a polynomial $1 + x^{q_{1}} + y^{q_2}$, where again the dependence on $q_1$ and $q_2$ is suppressed.
 \item[(iii)] $\expo(\kappa, x)$ shall denote the function $\exp(\kappa(1+x^2))$. Below, $\kappa$ shall always take the form $\kappa = 1/(C \poly(\| \sigma\|))$ for a suitable norm $\| \cdot \|$ of $\sigma$, which we will specify, and a universal constant $C$  as in (i) above.
\end{itemize}

\subsection{Main Results} 
\label{sec:Main:Results}
We now turn to describe the main results of the work, and to lay out some of the challenges  involved in their proofs.
As mentioned above, at the heart of our argument is obtaining moment bounds in high Sobolev spaces for solutions of \eqref{eq:frac:SNSE}. The first main result gives polynomial moment bounds for the $H^{r+\alpha(t)}$ norm of $\Vort(t)$, with $\alpha(t)$ increasing, and these bounds grow only linearly in time. This secular growth is in turn essential for the existence and uniqueness results below (cf.~Theorems~\ref{thm:existence} and \ref{thm:uniqueness}).

\begin{theorem}[\bf Polynomial Sobolev Moments and Smoothing]\label{thm:smoothing}
Fix $r > 2$, $ \gamma >0$, and let $m \geq 0$ and $T_m > 0$ be arbitrary.  Define
\begin{align} 
\alpha(t)  = 
\begin{cases}
m t T_m^{-1}, & t \in [0, T_m],\\
m, & t > T_{m}.
\end{cases}
\label{eq:alpha:def}
\end{align}
Then, for any $T > 0$ and any $q \geq 2$ we have
\begin{align}
& \E \left(\sup_{t \in [0,T]} \|\Lambda^{r+\alpha(t)} \Vort (t) \|_{L^2}^q + \int_0^T \|\Lambda^{r + \gamma/2 + \alpha(t)} \Vort(t)\|_{L^2}^2 \|\Lambda^{r+\alpha(t)} \Vort (t) \|_{L^2}^{q-2} dt\right)  \notag\\
&\qquad \leq C \PP(\E \|\Vort_0\|_{H^r}) + C T \PP(\|\sigma\|_{\HH^{r+m}}),
\label{eq:Smoothing:moments}
\end{align}
where $C= C(q,T_m, m)$ is a sufficiently large constant, independent of $T$. The polynomial $\PP$ is given explicitly in \eqref{eq:KB:moment:3} below.
\end{theorem}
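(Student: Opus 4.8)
The plan is to run a single energy estimate on the time-dependent norm $Y(t) := \|\Lambda^{s(t)} \Vort(t)\|_{L^2}^2$, with $s(t) = r + \alpha(t)$, and to convert the dissipation into an absorbing-ball mechanism that dominates the nonlinearity for \emph{every} $\gamma > 0$. First I would apply the It\^o formula to $Y(t)$ (rigorously at the level of a spectral Galerkin truncation, then pass to the limit using Proposition~\ref{prop:well-posedness}), keeping track of three sources: the differential of $\Vort$, the It\^o correction, and the explicit $t$-dependence of the exponent. Since $\tfrac{d}{dt}\Lambda^{2s(t)} = 2\dot\alpha(t)\,(\log\Lambda)\,\Lambda^{2s(t)}$, this produces
\begin{align*}
dY = \Big( 2\dot\alpha(t)\,\|\Lambda^{s}(\log\Lambda)^{1/2}\Vort\|_{L^2}^2 - 2\|\Lambda^{s+\gamma/2}\Vort\|_{L^2}^2 - 2\langle \Lambda^{s}\Vort, \Lambda^{s}(\bfU\cdot\nabla \Vort)\rangle + \|\sigma\|_{\HH^{s}}^2 \Big)\,dt + dM,
\end{align*}
where $dM = 2\langle \Lambda^{2s}\Vort, \sigma\, dW\rangle$. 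Here $0 \le \dot\alpha \le m/T_m$ and $s(t)\in[r,r+m]$, so $\|\sigma\|_{\HH^{s}} \le \|\sigma\|_{\HH^{r+m}}$. The $\log\Lambda$ term is harmless: since $\log\lambda \le \eps\lambda^{\gamma} + C_\eps$ for every $\eps>0$, it is bounded by $\eps \|\Lambda^{s+\gamma/2}\Vort\|_{L^2}^2 + C_\eps Y$ and absorbed into a sliver of the dissipation.

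The decisive preliminary input is a set of \emph{low-order} moment bounds that behave linearly in time; these supply the coefficients in the final differential inequality. For these I would exploit that the transport nonlinearity is conservative in every $L^p$: writing $\bar\Vort = \Vort - \sigma W$ and testing the $\bar\Vort$-equation against $|\bar\Vort|^{p-2}\bar\Vort$, the term $\int |\bar\Vort|^{p-2}\bar\Vort (\bfU\cdot\nabla\bar\Vort)\,dx = \tfrac1p\int \bfU\cdot\nabla|\bar\Vort|^p\,dx = 0$ vanishes by $\nabla\cdot\bfU=0$, while the C\'ordoba--C\'ordoba pointwise inequality gives $\int |\bar\Vort|^{p-2}\bar\Vort\, \Lambda^\gamma \bar\Vort\,dx \ge 0$ and in fact a coercive lower bound; the remaining terms involve only the finitely-supported Gaussian $\sigma W$. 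This yields polynomial moments with at most linear growth in $T$ for $\sup_{t\in[0,T]}\|\Vort\|_{L^p}$ for all finite $p$, and hence control of $\|\Vort\|_{L^\infty}$ up to the harmless regularity of $\sigma W$.

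With these in hand I would treat the nonlinear term. The divergence-free cancellation $\langle \Lambda^{s}\Vort, \bfU\cdot\nabla\Lambda^{s}\Vort\rangle = 0$ reduces it to the commutator $\langle \Lambda^s \Vort, [\Lambda^s, \bfU\cdot\nabla]\Vort\rangle$, which a commutator estimate tailored to the transport/divergence-free structure, together with the Biot--Savart smoothing $\|\Lambda^s \bfU\|_{L^2}\sim \|\Lambda^{s-1}\Vort\|_{L^2}$, bounds by $C\|\nabla\bfU\|_{L^\infty} Y$ plus terms of order strictly below $s+\gamma/2$ that are absorbed into $\eps\|\Lambda^{s+\gamma/2}\Vort\|_{L^2}^2$. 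A Beale--Kato--Majda logarithmic inequality gives $\|\nabla\bfU\|_{L^\infty} \lesssim 1 + \|\Vort\|_{L^\infty}\log(e+Y) + \|\Vort\|_{L^2}$, so the nonlinear contribution is at most $C\,b(t)\,Y\log(e+Y)$ with $b(t) \sim 1+\|\Vort\|_{L^\infty}+\|\Vort\|_{L^2}$ of good moments. Finally I would interpolate the dissipation against $Y$ and $\|\Vort\|_{L^2}$, namely $\|\Lambda^{s+\gamma/2}\Vort\|_{L^2}^2 \ge Y^{1+\delta}\|\Vort\|_{L^2}^{-\gamma/s}$ with $\delta = \gamma/(2s) > 0$, to arrive at
\begin{align*}
dY \le \Big( - c\, Y^{1+\delta}\|\Vort\|_{L^2}^{-\gamma/s} + C\,b(t)\,Y\log(e+Y) + C\|\sigma\|_{\HH^{r+m}}^2\Big)\,dt + dM.
\end{align*}
Because $Y^{\delta}$ eventually beats $\log(e+Y)$, the super-linear dissipation dominates the nonlinearity for large $Y$ no matter how small $\gamma$ is; this is the point at which the \emph{any} $\gamma>0$ claim is purchased, replacing the naive derivative count that would otherwise require $\gamma>1$.

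The last step is to raise this to the $q$-th power and take expectations. Applying It\^o to $Y^{q/2}$ (or a Gr\"onwall argument on the scalar inequality followed by the $L^{q/2}$ norm), the absorbing-ball structure forces the initial-data contribution to \emph{saturate} rather than grow, giving the $T$-independent term $C\PP(\E\|\Vort_0\|_{H^r})$, while the constant-rate forcing accumulates to $CT\PP(\|\sigma\|_{\HH^{r+m}})$; the martingale $M$ is controlled by the Burkholder--Davis--Gundy inequality after absorbing its quadratic variation into the dissipation and the supremum. I expect the main obstacle to be precisely the nonlinear estimate for $\gamma\le 1$, where the equation is supercritical by naive counting: the resolution is the interplay of (i) the maximum-principle $L^p/L^\infty$ control, which demotes $\|\nabla\bfU\|_{L^\infty}$ to a low-order coefficient with linear-growth moments, and (ii) the genuinely super-linear coercivity $Y^{1+\delta}$ of the dissipation, which dominates the at most $Y\log Y$ growth for all $\gamma>0$; the transient exponent $\alpha(t)$ and its $\log\Lambda$ by-product account for the gain of regularity from the merely $H^r$ initial data.
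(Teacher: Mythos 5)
Your skeleton matches the paper's: It\^o on the time-dependent norm $\|\Lambda^{s(t)}\Vort\|_{L^2}^2$, the $\log\Lambda$ by-product absorbed into a sliver of the dissipation, low-order moments obtained from the $L^p$ cancellation of the transport term plus the C\'ordoba--C\'ordoba coercivity, reduction of the nonlinearity to a commutator, and BDG at the end. But the step where you control that commutator contains a genuine gap. You bound it by $C\|\nabla\bfU\|_{L^\infty}Y$ and then invoke a Beale--Kato--Majda inequality, which requires moments of $\|\Vort\|_{L^\infty}$ with \emph{linear} growth in $T$. These do not follow from having $L^p$ moments ``for all finite $p$'': the constants in \eqref{eq:pk:kth:moment} degenerate as $p\to\infty$ (the $\eps$-Young step \eqref{eq:pk:sigma:upper} already costs a factor that grows like a power of $p$ raised to $p$, and a sup-in-time version would add further $p$-dependence through BDG), so no uniform-in-$p$ limit is available. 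The alternative pathwise route you sketch (testing the $\bar\Vort=\Vort-\sigma W$ equation) reintroduces the random coefficient $\sigma W$ into a Gr\"onwall factor, whose exponential moments grow super-linearly in $T$ and would destroy the $C\,\PP(\E\|\Vort_0\|_{H^r})+CT\,\PP(\|\sigma\|_{\HH^{r+m}})$ structure that the theorem asserts and that the existence proof in Section~\ref{sec:Feller:existence} relies on. This is precisely why the paper avoids $L^\infty$ throughout.

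What the paper does instead, and what your argument is missing, is an intermediate $H^1$ moment bound and a commutator estimate tailored to it. The $H^1$ step (Section~\ref{sec:Vort:H1}) controls $\E\|\Vort\|_{H^1}^q$ with linear growth by absorbing the nonlinearity into $\|\Vort\|_{H^{1+\gamma/2}}^2$ at the price of a fixed finite Lebesgue norm $\|\Vort\|_{L^{p_\gamma}}$, $p_\gamma=4+4/\gamma$. Then Lemma~\ref{lem:commutator} bounds $|\int[\Lambda^s,u\cdot\nabla]\Vort\,\Lambda^s\Vort\,dx|\le \eps\|\Vort\|_{H^{s+\gamma/2}}^2+C\|\Vort\|_{H^1}^{q}$ by interpolating every factor between $H^1$ and $H^{s+\gamma/2}$, so that the ``any $\gamma>0$'' claim is purchased by the dissipation absorbing the top-order piece and the $H^1$ moments absorbing the rest --- no $L^\infty$, no BKM, and no $Y^{1+\delta}$ absorbing-ball mechanism (the linear-in-$T$ growth comes simply from integrating drift terms whose coefficients already have linearly growing time-integrated moments, not from a saturation argument). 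If you replace your BKM step by this two-tier bootstrap $L^{p_\gamma}\to H^1\to H^{s(t)}$ and the interpolation-based commutator bound, the rest of your outline goes through; also remember to keep half of the dissipation on the left, since \eqref{eq:Smoothing:moments} requires the bound on $\int_0^T\|\Lambda^{s+\gamma/2}\Vort\|_{L^2}^2\|\Lambda^{s}\Vort\|_{L^2}^{q-2}\,dt$ as well.
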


We emphasize that $m$, the number of derivatives we want to gain, and $T_m$, the time in which this gain is achieved, can be taken arbitrarily large, respectively arbitrarily small. This is a quantitative control on the parabolic smoothing effects inherent in the equations. The techniques outlined in the proof of Theorem~\ref{thm:smoothing} below, combined with the arguments in~\cite{FoiasTemam1989,Mattingly2002}, may be used to show that in fact the equations lie in a Gevrey-class when $t>0$. 

The main difficulty in establishing Theorem~\ref{thm:smoothing} is that in high Sobolev spaces $H^s$ with $s>0$, unlike the case $s=0$, we do not have that $\int \Lambda^s (u \cdot \nabla \Vort) \Lambda^s \Vort dx = 0$. In order to obtain bounds that do not blow up in finite time, we use a commutator estimate which shows that the $H^s$ norm is under control globally, if the expected value of the $H^1$ norm to a large power is integrable in time. In turn, to obtain such polynomial moments for the $H^1$ norm, upon integration by parts, it is sufficient to obtain polynomial moment bounds for high $L^p$ norms of the solution. The latter is achieved using that the nonlinear term vanishes in $L^p$, and the positivity of the fractional Laplacian in $L^p$, see \cite{CordobaCordoba2004} and Appendix~\ref{app:Poincare} below. The above described argument of bootstrapping moments from $L^p$ to $H^1$ and then to $H^{r+\alpha(t)}$ is given in Subsection~\ref{sec:polyn:mom} below.

While the polynomial moments established in Theorem~\ref{thm:smoothing} are sufficient in order to establish the existence and regularity of invariant measures of \eqref{eq:frac:SNSE}, in order to establish gradient estimates for the Markov semigroup, which is an essential step for uniqueness of invariant measures, exponential moments are needed. 
As with the case of the Navier-Stokes equations ($\gamma=2$), classical arguments can be used to establish exponential moments for the $L^2$ norm of the solution.  
However in the Naver-Stokes case $L^2$ is also the phase space where the Markov semigroup evolves. In the fractional case considered here the Markov semigroup is evolving on $H^r$, 
and this discrepancy between the space where exponential moments are available and the phase space causes a number of difficulties.  At this stage, in order to be able to use the
parabolic smoothing property we make critical use of of exponential moments for \emph{any} large $L^p$ norm of which is the next result.

\begin{theorem}[\bf Exponential Lebesgue Moments]\label{thm:exponential}
Let $p \geq 2$ be even, and $T>0$ be arbitrary. There exists $\kappa_0 >0$ with
\[
\kappa_0 = \frac{1}{C(p) \PP(\|\sigma\|_{\LL^p})},
\] 
such that for every $\kappa \in (0,\kappa_0]$ we have
\begin{align} 
\E \exp\left(  \kappa   \| \Vort(T)\|_{L^{p}}^{2} \right) 
+\E \int_0^T  \exp\left(   \kappa   \| \Vort(t)\|_{L^{p}}^{2} \right) dt  \leq C \E \expo(\kappa,   \| \Vort_0\|_{L^{p}}^{2})  + C \kappa T 
\label{eq:expo:mom:I}
\end{align}
where $C=C(p)$. Moreover, for every $\kappa$ in this range we have that
\begin{align} 
\E \exp \left( \kappa   \int_0^T \|\Vort(s)\|_{L^p}^2 ds   \right) 
\leq  e^T \E \expo(\kappa^{1/2},  \|\Vort_0\|_{L^p}^2)
\label{eq:expo:mom:II}
\end{align}
holds.
\end{theorem}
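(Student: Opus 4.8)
The plan is to derive an It\^o differential inequality for the process $\|\Vort(t)\|_{L^p}^p$ (recall $p$ is even), extract from it an exponential moment bound via a stochastic Gronwall / exponential supermartingale argument, and then bootstrap the time-integrated estimate \eqref{eq:expo:mom:II} from the pointwise estimate \eqref{eq:expo:mom:I}. First I would apply the It\^o formula to the functional $\Vort \mapsto \|\Vort\|_{L^p}^p = \int_\TT |\Vort|^p\, dx$. The transport term $\bfU \cdot \nabla \Vort$ drops out by incompressibility ($\ddiv \bfU = 0$), since $\int_\TT (\bfU \cdot \nabla \Vort) |\Vort|^{p-2}\Vort\, dx = 0$. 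The crucial structural input is the lower bound on the dissipation: the sharpened Córdoba--Córdoba-type inequality (referenced via \cite{CordobaCordoba2004} and Appendix~\ref{app:Poincare}) must give a coercive estimate of the form
\begin{align*}
\int_\TT (\Lambda^\gamma \Vort) |\Vort|^{p-2}\Vort\, dx \geq c \|\Vort\|_{L^p}^2 - C,
\end{align*}
which replaces the usual $H^{\gamma/2}$ coercivity and is what makes the argument work uniformly in $\gamma \in (0,2]$. The It\^o correction from the noise contributes a term controlled by $\|\sigma\|_{\LL^p}^2 \|\Vort\|_{L^p}^{p-2}$, while the martingale part is a stochastic integral.

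Carrying out these steps, I would arrive at an inequality of the schematic shape
\begin{align*}
d\|\Vort\|_{L^p}^p + c\, p\, \|\Vort\|_{L^p}^{p-2}\bigl(\|\Vort\|_{L^p}^2 - C\bigr) dt \leq C(p)\|\sigma\|_{\LL^p}^2 \|\Vort\|_{L^p}^{p-2}\, dt + dM_t,
\end{align*}
where $M_t$ is a martingale whose quadratic variation is bounded by $C(p)\|\sigma\|_{\LL^p}^2 \|\Vort\|_{L^p}^{2p-2}\, dt$. Setting $X_t = \|\Vort(t)\|_{L^p}^2$ and using Young's inequality to absorb the lower-order terms into the coercive dissipation, this becomes a differential inequality of the form $dX \leq (C - c\, X)\, dt + d\widetilde M$ with $d\langle \widetilde M\rangle \leq C X^2\, dt$ (after dividing by $\|\Vort\|_{L^p}^{p-2}$; here one must be slightly careful near $\Vort = 0$, but the coercive drift handles large values and small values are harmless). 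Then I would apply It\^o's formula to $\exp(\kappa X_t)$: the drift of this exponential is $\kappa \exp(\kappa X)(C - cX) + \tfrac12 \kappa^2 \exp(\kappa X)\, d\langle \widetilde M\rangle/dt \leq \kappa \exp(\kappa X)(C - cX + \tfrac12 C \kappa X^2)$. Choosing $\kappa_0 = 1/(C(p)\PP(\|\sigma\|_{\LL^p}))$ small enough that the quadratic-in-$X$ term is dominated by the linear coercive term $-cX$ for all $X$, the net drift becomes $\leq \kappa \exp(\kappa X)(C - \tfrac{c}{2} X)$. Taking expectations (after a standard localization by stopping times to ensure the martingale terms are genuine martingales) and integrating in time yields \eqref{eq:expo:mom:I}, with the $-\tfrac{c}{2} X$ term producing the time-integrated exponential contribution on the left and the constant $C$ producing the $C\kappa T$ term on the right.

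For the second estimate \eqref{eq:expo:mom:II}, I would use a Jensen-type convexity argument: by Jensen's inequality applied to the (normalized) time average, $\exp(\kappa \int_0^T X_s\, ds) = \exp(T \cdot \kappa \frac{1}{T}\int_0^T X_s\, ds) \leq \frac{1}{T}\int_0^T \exp(\kappa T X_s)\, ds \cdot (\text{const})$, though more cleanly one writes $\E \exp(\kappa\int_0^T X_s\, ds) \leq e^T \cdot \frac{1}{T}\E\int_0^T \exp(\kappa T X_s)\, ds$ is not quite right dimensionally; instead I would run the exponential-martingale computation directly on $Y_t = \exp(\kappa \int_0^t X_s\, ds)\exp(\kappa^{1/2} X_t)$, showing it is a supermartingale for $\kappa \leq \kappa_0$, so that $\E Y_T \leq e^T \E Y_0$, which gives exactly \eqref{eq:expo:mom:II} after noting $\exp(\kappa\int X) \leq Y_T$ and $Y_0 = \expo(\kappa^{1/2}, \|\Vort_0\|_{L^p}^2)$; the factor $\kappa^{1/2}$ versus $\kappa$ reflects that one trades half a power of the exponent to close the supermartingale inequality against the accumulated integral.

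The main obstacle I anticipate is the sharp dissipation lower bound in $L^p$: obtaining the coercive estimate $\int (\Lambda^\gamma \Vort)|\Vort|^{p-2}\Vort \geq c\|\Vort\|_{L^p}^2 - C$ with a constant that is \emph{uniform} in the power $\gamma \in (0,2]$, and crucially tracking the $p$-dependence of the constants so that $\kappa_0$ has the stated form $1/(C(p)\PP(\|\sigma\|_{\LL^p}))$. This is precisely the "sharpening of a lower bound from \cite{CordobaCordoba2004}" advertised in the introduction, and everything downstream depends on its precise constants. The secondary delicate point is the calibration of $\kappa_0$ in the exponential-martingale step, where the quadratic growth of the martingale's quadratic variation (the $\tfrac12\kappa^2 \exp(\kappa X) d\langle \widetilde M\rangle$ term) must be strictly beaten by the linear coercivity $-cX$ uniformly over all values of $X$; this forces $\kappa_0$ to scale inversely with $\|\sigma\|_{\LL^p}$ and fixes the polynomial $\PP$.
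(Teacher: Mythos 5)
Your proposal follows essentially the same route as the paper: the $L^p$ It\^o formula with the transport term vanishing by incompressibility, the sharpened C\'ordoba--C\'ordoba lower bound from Appendix~\ref{app:Poincare}, an exponential of (a regularization of) $\|\Vort\|_{L^p}^2$ with $\kappa$ calibrated so the It\^o correction is absorbed by the coercive drift, and for \eqref{eq:expo:mom:II} an augmented functional containing the accumulated time integral whose exponential satisfies $\frac{d}{dt}\E \leq \E$, yielding the factor $e^T$ by Gr\"onwall --- the paper implements the passage from $\|\Vort\|_{L^p}^p$ to $\|\Vort\|_{L^p}^2$ via $\psi_\kappa(x)=\exp(\kappa(1+x)^{2/p})$, which is exactly the regularization near $\Vort=0$ you gesture at. One correction: the coercive estimate you state, $\int (\Lambda^\gamma\Vort)|\Vort|^{p-2}\Vort \geq c\|\Vort\|_{L^p}^{2}-C$, has the wrong homogeneity (after dividing by $\|\Vort\|_{L^p}^{p-2}$ it gives $\|\Vort\|_{L^p}^{4-p}$, which is not coercive for $p>4$); what Proposition~\ref{prop:Poincare} provides, and what your subsequent differential inequality $dX\leq(C-cX)\,dt+d\widetilde M$ actually requires, is $\int \Vort^{p-1}\Lambda^\gamma\Vort\,dx \geq \frac{1}{C_\gamma}\|\Vort\|_{L^p}^{p}$ with no additive constant, valid because $\Vort$ has zero mean. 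Also, no uniformity of $C_\gamma$ over $\gamma\in(0,2]$ is needed (nor does it hold, as $C_\gamma\to\infty$ when $\gamma\to 0$); $\gamma$ is fixed throughout.
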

We notice that the growth in time of \eqref{eq:expo:mom:I} is only linear, and the exponential growth in time of \eqref{eq:expo:mom:II} is at a rate that is independent of $\sigma$. The proof of Theorem~\ref{thm:exponential}, given in Subsection~\ref{sec:Vort:exp} below, is based on the It\=o Lemma in $L^p$~\cite{ZabczykDaPrato1992,Krylov2010}, and a Poincar\'e inequality in $L^p$ for fractional powers of the Laplacian, given in Proposition~\ref{prop:Poincare}. More precisely, for $p\geq 2$ even, we prove that 
\begin{align*}
\int_{{\mathbb{T}}^2} \theta^{p-1}(x) \Lambda^\gamma \theta(x) dx \geq \frac{1}{C_{\gamma}} \|\theta\|_{L^p}^p + \frac{1}{p} \| \Lambda^{\gamma/2}  ( \theta^{p/2}) \|_{L^2}^2 
\end{align*}
holds, with an explicit constant $C_{\gamma} \geq 1$ given by \eqref{eq:poincare:constant} below. 
The lower bound \eqref{eq:Poincare}, but without the $L^p$ norm of $\theta$ on the right side, was proven by C\'ordoba and C\'ordoba in~\cite{CordobaCordoba2004}.
Since $\theta^{p/2}$ is not of zero mean when $p\geq 4$ is even, this lower bound does not however follow directly from~\cite{CordobaCordoba2004}. Note that  in the case $\gamma=2$ we have a local operator, $-\Delta$, and the above estimate easily follows using integration by parts. In the fractional case, due to the lack of a Leibniz rule, we need a different argument, given in Appendix~\ref{app:Poincare} below.

In the next theorem we establish the Feller property for the Markov semigoup $P_t$ associated to \eqref{eq:frac:SNSE}, and prove the {\em existence} of an ergodic invariant measure for the dual semigroup, which additionally is supported on smooth functions.

\begin{theorem}[\bf Existence and Regularity of Invariant Measures]\label{thm:existence}
Let $r>2$ and $\gamma >0$. The system \eqref{eq:frac:SNSE} defines, for $t \geq 0$ 
a Feller semigroup $P_{t}$ on $H^{r}$.   There exists an ergodic invariant measure $\mu$ for $\{P_t\}_{t \geq 0}$. Moreover, 
every invariant measure $\mu$ of $P_{t}$ is supported on $C^\infty$ and 
\begin{align*} 
\int_{H^r} \|\Vort\|_{H^s}^q d\mu(\Vort) < \infty
\end{align*}
for every $s\geq r$, and for any $q \geq 2$.
\end{theorem}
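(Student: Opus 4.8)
The plan is to separate the three assertions: the Feller property, the existence of an invariant measure, and its regularity and support. I would establish them in that order, since the Feller property and the moment bounds together drive a Krylov--Bogoliubov argument for existence, and the smoothing result of Theorem~\ref{thm:smoothing} upgrades any invariant measure to one supported on $C^\infty$.

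\emph{Feller property.} First I would verify that $P_t$ maps $C_b(H^r)$ into itself, i.e.\ that for $\phi \in C_b(H^r)$ the map $\Vort_0 \mapsto \E \phi(\Vort(t,\Vort_0))$ is continuous. By dominated convergence it suffices to show that if $\Vort_0^{(n)} \to \Vort_0$ in $H^r$, then $\Vort(t,\Vort_0^{(n)}) \to \Vort(t,\Vort_0)$ in $H^r$ in probability (along a subsequence, almost surely). The difficulty is that there is no cancellation for the nonlinear term in $H^r$, so a direct Gr\"onwall estimate on the difference of two solutions loses a derivative. Here I would invoke the continuous-dependence result (Proposition~\ref{prop:cont:dep}), which as the authors note is itself a consequence of the instantaneous smoothing of Theorem~\ref{thm:smoothing}: after an arbitrarily short time the solutions live in $H^{r+\gamma/2}$ with controlled moments, so the one-derivative loss in the nonlinearity is absorbed by the gained regularity, and the difference can be controlled via a stopping-time argument on the (almost surely finite) $H^r$ norm. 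I expect this to be the main obstacle, as it is exactly the point where the degeneracy $\gamma \in (0,1]$ bites and the naive $L^2$ theory fails.

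\emph{Existence of an invariant measure.} With the Feller property in hand, I would run the Krylov--Bogoliubov procedure. Taking $\Vort_0$ deterministic (say $\Vort_0 = 0$), define the time-averaged measures
\begin{align*}
\mu_T = \frac{1}{T} \int_0^T P_t^\ast \delta_{\Vort_0} \, dt.
\end{align*}
To extract a convergent subsequence I need tightness of $\{\mu_T\}_{T>0}$ on $H^r$. This follows from Theorem~\ref{thm:smoothing}: choosing any $s > r$ (take $m = s-r$), the bound \eqref{eq:Smoothing:moments} with $q=2$ gives, after the transient time $T_m$,
\begin{align*}
\frac{1}{T} \int_{T_m}^T \E \|\Vort(t)\|_{H^s}^2 \, dt \leq C,
\end{align*}
uniformly in $T$, because the right-hand side of \eqref{eq:Smoothing:moments} grows only linearly in $T$. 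Since the embedding $H^s \hookrightarrow H^r$ is compact, the sublevel sets $\{\|\Vort\|_{H^s} \leq R\}$ are compact in $H^r$, and a Chebyshev estimate yields tightness. Any weak limit $\mu$ is then invariant by the standard Feller argument, and the ergodic decomposition theorem furnishes an ergodic invariant measure among the extreme points.

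\emph{Regularity and support.} Finally, let $\mu$ be any invariant measure. Using $P_t^\ast \mu = \mu$ and the instantaneous smoothing, I would integrate \eqref{eq:Smoothing:moments} against $\mu$ in the initial data: for any $s \geq r$ and $q \geq 2$, fixing $t = T_m$ so that $\alpha(T_m) = m = s - r$,
\begin{align*}
\int_{H^r} \|\Vort\|_{H^s}^q \, d\mu(\Vort)
= \int_{H^r} \E \|\Vort(T_m, \Vort_0)\|_{H^s}^q \, d\mu(\Vort_0)
\leq C \int_{H^r} \PP(\|\Vort_0\|_{H^r}) \, d\mu(\Vort_0) + C.
\end{align*}
The remaining integral of $\PP(\|\Vort_0\|_{H^r})$ is finite provided $\mu$ has finite polynomial $H^r$ moments, which one obtains by the same tightness estimate (or a separate Krylov--Bogoliubov moment bound). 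Since this holds for every $s \geq r$, the measure $\mu$ charges $\bigcap_{s \geq r} H^s$, and by Sobolev embedding this intersection sits inside $C^\infty(\TT)$; hence $\mu$ is supported on $C^\infty$. The only care needed is the bootstrap: to reach arbitrarily large $s$ one applies the smoothing with arbitrarily large $m$, which is permitted since Theorem~\ref{thm:smoothing} holds for every $m \geq 0$.
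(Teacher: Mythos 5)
Your treatment of the Feller property and of existence follows the paper's architecture: continuous dependence in $H^{r-1+\alpha(t)}$ via the instantaneous smoothing, a stopping time to control the exponential factor, and Krylov--Bogoliubov with tightness from the linear-in-$T$ moment bound. (Your finishing step for Feller -- convergence in probability plus bounded convergence -- is a slightly cleaner variant of the paper's, which instead approximates $\phi$ by a Lipschitz function on a compact set; both work.)

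The regularity step, however, has a genuine gap. The theorem asserts that \emph{every} invariant measure $\mu$ satisfies $\int_{H^r}\|\Vort\|_{H^s}^q\,d\mu<\infty$, and your argument
\begin{align*}
\int_{H^r} \|\Vort\|_{H^s}^q \, d\mu(\Vort)
= \int_{H^r} \E \|\Vort(T_m, \Vort_0)\|_{H^s}^q \, d\mu(\Vort_0)
\leq C \int_{H^r} \PP(\|\Vort_0\|_{H^r}) \, d\mu(\Vort_0) + C
\end{align*}
is circular for an arbitrary $\mu$: the right-hand side requires finite polynomial $H^r$-moments of $\mu$, which (taking $s=r$ and $q$ large) is precisely part of the conclusion. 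For a general invariant measure one only knows $\|\Vort\|_{H^r}<\infty$ $\mu$-a.s., with no integrability, and neither ``the same tightness estimate'' (which applies to the Krylov--Bogoliubov averages $\mu_T$, not to $\mu$) nor a single application of $P_{T_m}^\ast\mu=\mu$ closes the loop. The paper's Proposition~\ref{prop:higher} avoids this by testing against the bounded observable $\phi_{R,N}(\Vort)=\|P_N\Vort\|_{H^s}^q\wedge R$, averaging the identity $\int\phi_{R,N}\,d\mu=\frac{1}{T-1}\int_1^T\int P_t\phi_{R,N}\,d\mu\,dt$ over $t\in[1,T]$, splitting the inner integral over $B_{H^r}(\rho)$ and its complement, and then sending $T\to\infty$ \emph{first}: the linear-in-$T$ growth of \eqref{eq:Smoothing:moments} makes the term $\frac{C\PP(\rho)+CT\PP(\|\sigma\|_{\HH^s})}{T-1}$ converge to $C\PP(\|\sigma\|_{\HH^s})$, killing all dependence on the (unknown) moments of $\mu$ on the ball, after which $\rho\to\infty$ disposes of the complement and monotone convergence removes the truncations. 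This is exactly where the linear (rather than merely polynomial) time growth in Theorem~\ref{thm:smoothing} is used, and it is needed later: the uniqueness argument invokes the moment bound \eqref{eq:KB:moment:3:stat} for the stationary solution associated to an \emph{arbitrary} invariant measure.
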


In the classical case of the stochastic Navier-Stokes equations, the Feller property follows directly from a continuous dependence
estimate on data in the phase space $L^{2}$.   In the fractional case with $\gamma \ll 1$, we face two complicating 
factors. The standard continuous dependence on data estimates in $H^{r}$ do not appear to work since we cannot control stray terms arising from linearization, as can be seen from a naive accounting based on the number of derivatives (for the deterministic Euler equation this is in fact not true~\cite{Masmoudi07}). To overcome this difficulty we make careful use of a parabolic smoothing argument, by controlling the difference of two $H^{r+\alpha(t)}$ solutions in $H^{r-1+\alpha(t)}$. Coupled with the bounds available from  Theorem~\ref{thm:smoothing}, with $m=1$ and $T_m$ sufficiently small, this allows us to control the difference of the solutions in $H^r$, for any strictly positive time.  Even leaving the regularity issue aside,   in contrast to the classical case where the Feller property is an immediate ``pathwise'' inference from the Dominated Convergence Theorem, due lack of cancellations here we must invoke a delicate stopping time and density argument. This is the content of Proposition~\ref{prop:Feller} below. See also the recent work \cite{GHKVZ13} where a similar approach has been used to address multiplicative noise. 

With the Feller property now in hand the existence and regularity of invariant measure now follows from Theorem~\ref{thm:smoothing} with the aid of standard long-time averaging arguments.  While we only go so far as to give the details for the $C^{\infty}$ 
support of $\mu$ we believe it should be possible to show that $\mu$ is in fact supported on Gevrey-class functions.
It is also worth emphasizing that up to this point in the work our arguments extend trivially to any additive noise with
a sufficiently smooth $\sigma$ and
indeed even to certain classes state dependent noise structures.  The proof of Theorem~\ref{thm:existence} is given
at the end of Section~\ref{sec:feller}.

\begin{theorem}[\bf Uniqueness of Invariant Measures]\label{thm:uniqueness}
Let $r>2$ and $\gamma>0$. There exists an $N=N(\gamma, r, \|\sigma\|_{\LL^{6/\gamma}})$, such that if the ball of radius $N$ in $\ZZ_0^2$ lies inside $\ZZF$, then there exists a unique and ergodic  invariant measure.
\end{theorem}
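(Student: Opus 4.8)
The plan is to establish unique ergodicity by verifying the \emph{asymptotic strong Feller} property together with a suitable irreducibility/support-overlap condition, following the Hairer--Mattingly framework. Since Theorem~\ref{thm:existence} already gives existence of an invariant measure, the task reduces to uniqueness. By the abstract criterion, it suffices to show that every invariant measure shares a common point in the support of its topological support (here the point $\Vort = 0$, reachable by irreducibility) and that $P_t$ is asymptotically strong Feller at that point. Irreducibility in $H^r$ can be obtained from a control argument: using the instantaneous parabolic smoothing from Theorem~\ref{thm:smoothing} to enter a small ball in a high Sobolev norm, and then using the forced low modes (the ball of radius $N$ in $\ZZF$) to steer the solution near $0$, with the dissipation $\Lambda^\gamma$ absorbing the unforced high modes.

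The heart of the argument is the asymptotic strong Feller property, which I would prove via a gradient estimate of the form $|\nabla P_t \phi(\Vort_0)| \leq C(\Vort_0)(\|\phi\|_\infty + \delta(t)\|\nabla \phi\|_\infty)$ with $\delta(t) \to 0$. The standard route is the Foias--Prodi / asymptotic coupling approach: for a perturbation direction $\xi$, one differentiates the flow and constructs a variation $\rho_t = D\Vort \cdot \xi + (\text{noise correction})$ via Malliavin/Girsanov control, choosing the control $v$ supported on the forced low modes $\ZZF$ so that the linearized difference $\rho_t$ decays. This is where the number $N$ enters: one needs enough forced modes to control the unstable low-frequency dynamics, with the high modes handled by the dissipation. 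The key decay estimate requires that the control cost (an $L^2$-in-time norm of $v$) has finite exponential moments, which is precisely why Theorem~\ref{thm:exponential} is invoked. I would first set up the control/variation equations in the $L^2$ phase space for the velocity, where the exponential $L^p$ moments of \eqref{eq:expo:mom:II} are available, obtain the decay of $\rho_t$ in $L^2$, and then \emph{bootstrap} to the $H^r$ gradient estimate using instantaneous smoothing and interpolation, exactly as outlined in the introduction.

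The main obstacle, and the step requiring the most care, is this transfer of the gradient/decay estimate from $L^2$ to the phase space $H^r$. Because there is no cancellation for the nonlinearity in $H^r$, one cannot run the Foias--Prodi argument directly there; instead, one establishes decay of the linearized perturbation in $L^2$ (or a low Sobolev norm) using only the $L^p$ exponential moments, and then uses the smoothing effect of $\Lambda^\gamma$ over an arbitrarily short transient window to promote this to $H^r$ control. The delicate points are: (i) ensuring the Girsanov control $v$ has cost with finite exponential moments uniformly, so that the change-of-measure argument closes; (ii) quantifying $N$ in terms of the spectral gap needed to dominate the nonlinear stretching, which ultimately fixes $N = N(\gamma, r, \|\sigma\|_{\LL^{6/\gamma}})$ through the interpolation between the $L^{6/\gamma}$ moment control and the $H^r$ estimates; and (iii) verifying that the resulting estimate is genuinely of asymptotic strong Feller type (with the coefficient of $\|\nabla\phi\|_\infty$ tending to zero), rather than merely strong Feller, which would require full-rank noise. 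Once the asymptotic strong Feller property and irreducibility are in place, uniqueness and ergodicity follow from the abstract theorem of Hairer--Mattingly.
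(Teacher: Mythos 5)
Your proposal follows essentially the same route as the paper: asymptotic strong Feller via a gradient estimate obtained from a low-mode control problem, with the decay of the control first established in a low norm (the paper works in $H^{-1}$ for the vorticity, i.e.\ $L^2$ for the velocity $K\ast\rho$) using only the $L^p$ exponential moments of Theorem~\ref{thm:exponential}, then bootstrapped to $H^r$ by parabolic smoothing and interpolation, and finally combined with weak irreducibility at $0$ and the Hairer--Mattingly support theorem. The only cosmetic differences are that the paper bounds the control cost through the It\^o isometry inside the Malliavin integration-by-parts identity rather than a Girsanov change of measure, and its irreducibility argument uses the passive decay of the shifted dynamics on small-noise events (together with uniform stationary moment bounds showing every invariant measure charges a fixed ball) rather than active steering by the forced modes.
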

The proof of Theorem~\ref{thm:uniqueness} is carried out in Section~\ref{sec:ASF} and consists of two principal steps.
First we establish a certain time-asymptotic smoothing property of the Markov semigroup associated to \eqref{eq:frac:SNSE}. 
More specifically, we establish that $P_t$ satisfies the so-called {\em asymptotically strong Feller property}, which was introduced in \cite{HairerMattingly06}, and is recalled here 
in Definition~\ref{def:ASF} below. In practice this is achieved through an estimate on the gradient of the Markov semigroup obtained in Proposition~\ref{thm:ASF:easy}. In this setting, using some tools 
from Malliavin calculus, the gradient estimate boils down to constructing a suitable ``control'', which assigns to every perturbation in the initial data a perturbation in the noise.  This perturbation in the chosen so that the global 
dynamics is controlled by the dynamics on a sufficiently large, but finite, number of determining Fourier modes (Foias-Prodi estimates). Asymptotically, as $t \to \infty$, one has to show that the size of this control vanishes, which encodes the time-asymptotic smoothing of the Markov semigroup. 

For $\gamma$ small the difficulty here is twofold, even in the so called ``essentially elliptic'' case, where we force all the determining modes of the system. First, even if this number of modes $N$ is sufficiently large, a simple derivative count shows that for the control to decay one has to use the parabolic smoothing described in Theorem~\ref{thm:smoothing}. This aside, a second difficulty arises: we do not have exponential moments for the $H^s$ norms of the solution when $s\geq 1$, and such exponential moments appear to play an indispensable role in such gradient estimates. We overcome this difficulty by first proving that the $H^{-1}$ norm of the control decays, which only requires exponential moments for the $L^p$ norm of the solution, available by Theorem~\ref{thm:exponential}. We then use the smoothing effects and interpolation to bootstrap this $H^{-1}$ decay to a decay in $H^r$.

The second step is to establish that $P_t$ is weakly irreducible at $0$, meaning that $0$ is in the support of every invariant measure. This property follows from uniform estimates on the stationary solution established after Proposition~\ref{prop:higher} and the following property of the equations: the unforced dynamics are driven to $0$, and moreover this fixed point is stable under perturbations in the forcing.
The precise estimates which lead to the weak irreducibility property are given in Section~\ref{sec:WI}. 

Combining these two main steps, the asymptotic strong Feller property and the weak irreducibility, we now rely on the following fundamental result:

{\bf Theorem} \cite[Theorem 3.16]{HairerMattingly06}
{\em Let $\mu$ and $\nu$ be two distinct ergodic invariant probability measures for $P_t$. If $P_t$ is asymptotically strong Feller at $\Vort$, then $\Vort \not \in {\rm supp} \mu \cap {\rm supp} \nu$.}

Using the above result, in view of Theorem~\ref{thm:existence} we may now infer the uniqueness of invariant measures.

\section{A Priori Estimates. Moment Bounds and Instantaneous Smoothing}
\label{sec:moments}
\setcounter{equation}{0}

In this section we establish the following moment bounds for solutions $\Vort(t,\Vort_{0})$ of \eqref{eq:frac:SNSE}. For the sake of generality, we consider possibly random initial data, but the moment bounds obtained here will be applied in forthcoming sections with deterministic initial conditions.

\subsection{Polynomial Moments and Smoothing Estimates}
\label{sec:polyn:mom}

The goal of this subsection is to prove Theorem~\ref{thm:smoothing}, which is achieved in several steps. The first step is to obtain an moment bound for $L^p$ norms of $\Vort$. The second step is to bootstrap using a commutator estimate and obtain polynomial moments for the $H^1$ norm of the vorticity. The last step is to use the inherent parabolic regularization in the equation to further bootstrap and prove polynomial moment bounds on high Sobolev norms. We emphasize that all the moment bounds obtained in this subsection grow at most linearly with time.   This is essential way to establish the existence of invariant measures below in Section~\ref{sec:Feller:existence}.

\subsubsection{Estimates for \texorpdfstring{${\Vort}$}{w} in \texorpdfstring{$L^{p}$}{Lp}, \texorpdfstring{$p \geq 2$}{p>2}.}
\label{sec:Vort:Lp}
We now prove moment bounds for $L^{p}$ norms of the solution, with $p \geq  2$, and {\em even}.  Applying the It\=o lemma pointiwse in $x$ and the stochastic Fubini theorem, we obtain the following $L^p$ version of the It\=o lemma (see also \cite{Krylov2010})
\begin{align}
d \| \Vort\|_{L^{p}}^{p} =&\left( -p  \int_{\TT}  \Vort^{p-1} \Lambda^\gamma \Vort dx + \frac{p(p-1)}{2} \sum_{l \in \ZZF}  \int_{\TT} \sigma_l^2  \Vort^{p-2} dx \right) dt + p \sum_{l\in \ZZF} \left( \int_{\TT} \sigma_l  \Vort^{p-1} dx \right) d W^l
\notag\\
=:& \left(-pT_{1,p} + \frac{p(p-1)}{2}T_{2,p}\right)dt + p\sum_{l \in \ZZF} S_{l,p} dW^l. \label{eq:pk:Ito}
\end{align}
Using Proposition~\ref{prop:Poincare}, we have a lower bound on the fractional Laplacian  
\begin{align}
p T_{1,p} = p  \int_{\TT} \Vort^{p -1} \Lambda^\gamma \Vort dx \geq \frac{1}{C_\gamma} \|\Vort\|_{L^p}^p +  \int_{\TT} \left| \Lambda^{\gamma/2} (\Vort^{p/2}) \right|^2 dx
\label{eq:CC:lower:bound}
\end{align}
where $C_\gamma \geq 1$ may be computed explicitly.
A standard H\"older and  $\epsilon$-Young bound for the second term on the right side of \eqref{eq:pk:Ito} yields
\begin{align} 
\frac{p(p-1)}{2} T_{2,p}  
\leq p^2 \|\sigma\|_{\LL^{p}}^2 \| \Vort\|_{L^{p}}^{p-2} \leq \frac{1}{2C_\gamma} \|\Vort\|_{L^{p}}^{p} + C  \|\sigma\|_{\LL^{p}}^{p} \label{eq:pk:sigma:upper}
\end{align}
where $C_\gamma$ is the constant appearing in \eqref{eq:CC:lower:bound}.
We integrate \eqref{eq:pk:Ito} on $[0,T]$, take expected values and use the estimate \eqref{eq:CC:lower:bound}--\eqref{eq:pk:sigma:upper} to arrive at
\begin{align} 
& \E \|\Vort(T)\|_{L^{p}}^{p} +  \int_0^T \E \| \Vort(s)\|_{L^{p}}^{p} ds  \leq C \E \|\Vort_0\|_{L^{p}}^{p}+ C T \| \sigma \|_{\LL^{p}}^{p} . \label{eq:pk:kth:moment}
\end{align}

\subsubsection{Estimates for \texorpdfstring{${\Vort}$}{w} in \texorpdfstring{$H^{1}$}{H1}}
\label{sec:Vort:H1}

We now obtain moment bounds for the $H^1$ norm of the vorticity. First we deal with quadratic moments by appealing to the It\=o lemma in $H^1$ 
\begin{align}
d \| \Vort\|_{H^1}^2 + 2 \| \Vort\|_{H^{1+\gamma/2}}^2 dt 
= \left( \|\sigma\|_{\HH^1}^2 - 2 \int_{\TT} u \cdot \nabla \Vort \Delta \Vort dx \right) dt 
+ 2  \sum_{l\in \ZZF} \left( \int_{\TT} \sigma_l \Delta \Vort   dx \right) d W^l.
\label{eq:Ito:H1}  
\end{align}
Upon integration by parts, using that $\nabla \cdot u = 0$, the nonlinear term may be bounded as
\begin{align}
\left| \int_{\TT} u \cdot \nabla \Vort \Delta \Vort dx \right| 
= \left| \int_{\TT} \nabla \Vort \colon \nabla u \cdot \nabla \Vort  dx \right| 
\leq \|\nabla \omega\|_{L^{2+\eps}}^2 \| \nabla u\|_{L^{\frac{2+\eps}{\eps}}}
\label{eq:KB:step2:non:1}
\end{align}
where $\eps = \eps(\gamma) := 2\gamma/(4-\gamma)$ is defined such that $H^{\gamma/4} \subset L^{2+\eps}$ by Sobolev embedding. Using that $\nabla u$ is given by a matrix of  Riesz transforms acting on $\Vort$, and the Gagliardo-Nirenberg estimate, the right side of \eqref{eq:KB:step2:non:1} is further bounded as
\begin{align}
\|\nabla \omega\|_{L^{2+\eps}}^2 \| \nabla u\|_{L^{\frac{2+\eps}{\eps}}} 
&\leq C  \| \Vort\|_{L^{\frac{2+\eps}{\eps}}} \| \Vort\|_{H^{1+\frac{\gamma}{4}}}^2   \leq C \|\Vort\|_{L^{\frac{4}{\gamma}}} \|\Vort\|_{L^2}^{\frac{\gamma}{2+\gamma}} \|\Vort\|_{H^{1+\frac{\gamma}{2}}}^{2- \frac{\gamma}{2+\gamma}} \notag\\
&\leq  \|\Vort\|_{H^{1+\frac{\gamma}{2}}}^{2} + C  \|\Vort\|_{L^2}^2 \|\Vort\|_{L^{\frac{4}{\gamma}}}^{\frac{2(2+\gamma)}{\gamma}}  \leq \|\Vort\|_{H^{1+\gamma/2}}^{2} + C \|\Vort\|_{L^{\frac{4}{\gamma}}}^{\frac{4}{\gamma} +4} 
\label{eq:KB:step2:non:2}
\end{align}
for some sufficiently large constant $C$ that depends on $\gamma \in (0,2]$ and the size of the periodic box. Letting 
\begin{align}
p_\gamma =  4 + \frac{4}{\gamma}, \label{eq:p:gamma}
\end{align}
and using once more the H\"older and Poincar\'e inequalities, \eqref{eq:Ito:H1} gives
\begin{align}
d \| \Vort\|_{H^1}^2 +   \| \Vort\|_{H^1}^2 dt 
\leq \left( \|\sigma\|_{\HH^1}^2 + C  \|\Vort\|_{L^{p_{\gamma}}}^{p_{\gamma}} \right) dt 
+ 2  \sum_{l\in \ZZF} \left( \int_{\TT} \sigma_l \Delta \Vort   dx \right) d W^l
\label{eq:Ito:H1:2}  
\end{align}
and hence, upon integrating on $[0,T]$ and taking expected values we arrive at
\begin{align}
\E \| \Vort(T)\|_{H^1}^2 +  \int_0^T \E \| \Vort(s)\|_{H^{1}}^2 ds
&\leq \E \| \Vort_0\|_{H^1}^2 + T \|\sigma\|_{\HH^1}^2 + C \int_0^T \E \|\Vort(s)\|_{L^{p_{\gamma}}}^{p_{\gamma}} ds \notag\\
&\leq \E \| \Vort_0\|_{H^1}^2 +  C  \E \|\Vort_0\|_{L^{p_{\gamma}}}^{p_{\gamma}} + C  T \left(\|\sigma\|_{\HH^1}^2 + \| \sigma \|_{\LL^{p_{\gamma}}}^{p_{\gamma}} \right).
\label{eq:KB:moment:2:1}  
\end{align}
In the last inequality above we have appealed to the $L^{p_{\gamma}}$ moment bound \eqref{eq:pk:kth:moment} above. 

In Section~\ref{sec:Vort:Hr} below we will in fact need bounds on $\E \|\Vort\|_{H^1}^{q}$, with $q \geq 2$ possibly large, depending on $\gamma$. To this end, we apply the It\=o formula to the function $\phi(x) = x^{q/2}$, and $x(t) = \|\Vort (t) \|_{H^1}^2$ and obtain as in \eqref{eq:Ito:H1:2} that
\begin{align}
d \|\Vort\|_{H^1}^q 
&\leq \frac{q}{2} \|\Vort\|_{H^1}^{q-2} \left( -  \|\Vort\|_{H^1}^2 + \|\sigma\|_{\HH^1}^2 + C \|\Vort\|_{L^{p_{\gamma}}}^{p_{\gamma}}\right) dt\notag\\
&\qquad + q \|\Vort\|_{H^1}^{q-2} \sum_{l\in\ZZF} \left( \int_{\TT} \sigma_l \Delta \Vort   dx \right) d W^l  + \frac{q (q-2)}{2} \|\sigma\|_{\HH^1}^2 \|\Vort\|_{H^1}^{q-2} dt \label{eq:Ito:H1:3}
\end{align} 
for any $q\geq 2$. Integrating \eqref{eq:Ito:H1:3} on $[0,T]$, taking expected values and using the $\eps$-Young inequality three times, we arrive at
\begin{align} 
\E \|\Vort(T)\|_{H^1}^q + \frac{q}{8} \int_0^T \E \| \Vort(s)\|_{H^1}^q ds & \leq \E \|\Vort_0\|_{H^1}^q + C T \|\sigma\|_{\HH^1}^q  + C \int_0^T \E \|\Vort(s)\|_{L^{p_\gamma}}^{\frac{q p_\gamma}{2}} ds  \notag\\
& \leq \E \|\Vort_0\|_{H^1}^q + C \E \|\Vort_0\|_{L^{qp_\gamma/2}}^{qp_\gamma/2} + C T \left( \|\sigma\|_{\HH^1}^q  +   \| \sigma \|_{\LL^{q p_\gamma/2}}^{ q p_\gamma/2}  \right) \notag\\
& \leq C \poly(\E\|\Vort_0\|_{H^1}) + C T \poly ( \|\sigma\|_{\HH^1} ) 
\label{eq:KB:moment:2}
\end{align}
where in the second inequality we have used the H\"older inequality, and the moment bound \eqref{eq:pk:kth:moment} for the $L^{q p_\gamma/2}$ norm. Here $C$ depends on the parameter $q$.

\subsubsection{Estimates for \texorpdfstring{$\Vort$}{w} in \texorpdfstring{$H^{r +\alpha(t)}$}{H r+alpha}, smoothing.}
\label{sec:Vort:Hr}
In this final step of the proof of Theorem~\ref{thm:smoothing}, we make estimates on the $L^2$ norm of $\Lambda^{r + \alpha(t)} \Vort(t)$. Here 
for the sake of brevity we define
\begin{align*} 
s(t) = r + \alpha(t)
\end{align*}
and note that ${s}(t)>2$ for all $t\geq 0$.  
The It\=o lemma in $L^2$, applied to $\Lambda^{s(t)} \Vort$, yields
\begin{align}
&d \| \Lambda^{{s}} \Vort\|_{L^2}^2 + 2 \|\Lambda^{{s}+\gamma/2} \Vort\|_{L^2}^2 dt - 2 \dot{\alpha}(t) \|  \Lambda^{{s}} (\log \Lambda)^{1/2} \Vort \|_{L^{2}}^{2} dt \notag\\
&\qquad = \left( \|\sigma\|_{\HH^s}^2 -2\int_{\TT} u \cdot \nabla \Vort (-\Delta)^{{s}} \Vort dx \right) dt 
+ 2  \sum_{l\in\ZZF} \left( \int_{\TT} \sigma_l (-\Delta)^{{s}} \Vort   dx \right) d W^l,
\label{eq:Ito:Hr}  
\end{align}
where we have used that
\begin{align*}
\frac{d}{dt} |k|^{2{s}(t)} 
= 2\dot{\alpha}(t) \log(|k|) |k|^{2{s}(t)}
=
\begin{cases}
2 m T_m^{-1} \log(|k|) |k|^{2{s}(t)}, & t \in [0, T_m],\\
0, & t > T_m.
\end{cases}
\end{align*}
In order to bootstrap from \eqref{eq:Ito:Hr} to compute higher moments we now make a 
second application of the It\={o} lemma with $\phi(x) = x^{q/2}$.  We obtain
\begin{align}
&d \| \Lambda^{{s}} \Vort\|_{L^2}^q + q \|\Lambda^{{s}+\gamma/2} \Vort\|_{L^2}^2  \| \Lambda^{{s}} \Vort\|_{L^2}^{q-2} dt  - q\dot{\alpha}(t) \|  \Lambda^{{s}} (\log \Lambda)^{1/2} \Vort \|_{L^{2}}^{2} \| \Lambda^{{s}} \Vort\|_{L^2}^{q-2}dt \notag\\
&\qquad =\frac{q}{2}\| \Lambda^{{s}}\Vort\|_{L^2}^{q-2} \left( \|\sigma\|_{\HH^s}^2 -2\int_{\TT} u \cdot \nabla \Vort (-\Delta)^{{s}} \Vort dx \right) dt 
+ q \| \Lambda^{{s}}\Vort\|_{L^2}^{q-2} \sum_{l\in\ZZF} \left( \int_{\TT} \sigma_l (-\Delta)^{{s}} \Vort   dx \right) d W^l
\notag\\
&\qquad \quad + \frac{q(q-2)}{2}  \|\Lambda^{{s}} \Vort\|_{L^2}^{q-4}\sum_{l\in\ZZF} \left( \int_{\TT} \sigma_l (-\Delta)^{{s}} \Vort   dx \right)^{2}dt.
\label{eq:Ito:Hr:q}
\end{align}

Now, since for any $\gamma>0$ there exists $N_* = N_*(\gamma, m, T_m) >0$ such that
\begin{align}
 \dot{\alpha} \log(|k|) \leq  m T_m^{-1} \log (|k|) \leq \frac{1}{2} |k|^\gamma, \quad \mbox{for all } |k|\geq N_* 
\label{eq:N:m:gamma}
\end{align}
and possibly choosing $N_*$ larger so that $N_*^{\gamma/2} >   m T_m^{-1}$ we have
\begin{align}
 \dot \alpha(t) \|  \Lambda^{\tilde{s}(t)} (\log \Lambda)^{1/2} \Vort \|_{L^{2}}^{2} 
&=    \dot \alpha(t) \sum_{k \in \ZZ^2_0} |k|^{2{s}(t)} \log|k| |\hat\Vort_k|^2 \notag\\
& \leq  \frac{1}{2}\| \Lambda^{{s}(t) + \gamma/2} \Vort\|_{L^2}^2 + N_{*}^{2s-2+\gamma} \indFn{t\leq T_m} \| \Vort\|_{H^1}^2
\label{eq:log:Hs}
\end{align}
Thus, with \eqref{eq:log:Hs} and \eqref{eq:Ito:Hr:q} we may thus conclude
\begin{align}
&d \| \Lambda^{{s}} \Vort\|_{L^2}^q + \frac{q}{2} \|\Lambda^{{s}+\gamma/2} \Vort\|_{L^2}^2  \| \Lambda^{{s}} \Vort\|_{L^2}^{q-2} dt  \notag\\
&\qquad \leq \frac{q}{2}
\|\Lambda^{s} \Vort\|_{L^2}^{q-2} \left( (q-1) \|\sigma\|_{\HH^s}^2 
+ N_{*}^{2s-2+\gamma} \indFn{t\leq T_m} \| \Vort\|_{H^1}^2
+ 2 \left|\int_{\TT} u \cdot \nabla \Vort (-\Delta)^{{s}} \Vort dx\right| \right) dt 
\notag\\
&\qquad \quad + q \| \Lambda^{{s}}\Vort\|_{L^2}^{q-2} \langle \Lambda^{{s}} \sigma, \Lambda^{{s}}\Vort \rangle dW.
\label{eq:Ito:Hr:q:2}
\end{align}

To estimate the nonlinear term on the right side of \eqref{eq:Ito:Hr:q}, since $\nabla \cdot u = 0$, we may rewrite
\begin{align*}
\int_{\TT} (K\ast \Vort) \cdot \nabla \Vort (-\Delta)^{{s}} \Vort dx = \int [\Lambda^{{s}}, (K\ast \Vort) \cdot \nabla] \Vort \Lambda^{{s}} \Vort dx
\end{align*}
where $[\Lambda^{{s}}, f \cdot \nabla] g= \Lambda^{{s}} (f \cdot \nabla g) - f \cdot \Lambda^{{s}} g$.
We now use the commutator estimate \eqref{eq:comm:sobolev} of Lemma~\ref{lem:commutator}, with $\eps = \frac{1}{4}$, and conclude
\begin{align}
2 \left|\int_{\TT} (K\ast \Vort) \cdot \nabla \Vort (-\Delta)^{{s}} \Vort dx \right| \leq C (1 + \|\Vort\|_{H^1}^{p}) + \frac{1}{4}\|\Vort\|_{H^{{s}+\gamma/2}}^{2}.
\label{eq:com:Est:Smoothing}
\end{align}
with $C = C(m)$ since ${s} \leq r+m$ and where 
\begin{align}
p =   \frac{ 4 ( (4+\gamma) (r+m +\gamma) - 4 )}{\gamma (6 + \gamma)}.
\label{eq:q:gamma}
\end{align}
Putting together \eqref{eq:com:Est:Smoothing} with \eqref{eq:Ito:Hr:q:2} we obtain
with an appropriate usage of  the $\eps$-Young inequality.
\begin{align}
&d \| \Lambda^{{s}} \Vort\|_{L^2}^q + \frac{q}{8} \|\Lambda^{{s}+\gamma/2} \Vort\|_{L^2}^2  \| \Lambda^{{s}} \Vort\|_{L^2}^{q-2} dt  \notag\\
&\qquad \leq C \left(   N_{*}^{2r + 2m -2+\gamma} \| \Vort\|_{H^1}^2  
+ \|\sigma\|_{\HH^{r+m}}^2 + (1+ \| \Vort\|_{H^1}^{p}) \right)^{q/2} dt 
+ q \| \Lambda^{{s}}\Vort\|_{L^2}^{q-2} \langle \Lambda^{{s}} \sigma, \Lambda^{{s}}\Vort \rangle d W. 
\label{eq:Ito:Hr:3}  
\end{align}
We now conclude the desired results by integrating \eqref{eq:Ito:Hr:3} over any interval $[0,t] \subset [0,T]$, taking a supremum in $t$ and then taking expected values. 
Applying standard arguments using the Burkholder-Davis-Gundy inequality to the martingale terms, we obtain
\begin{align}
\E& \left( \sup_{t \in [0,T]} \|\Lambda^{{s(t)}} \Vort(t) \|_{L^2}^q 
+  \int_0^T \|\Lambda^{{s(t)}+\gamma/2} \Vort\|_{L^2}^2  \| \Lambda^{{s(t)}} \Vort\|_{L^2}^{q-2} dt  \right)
\notag\\
& \quad \leq C\E \|\Lambda^r \Vort_0\|_{L^2}^q + C \E \left( \int_0^T  1+ \|\sigma\|_{\HH^{{r+m}}}^q + 
   \| \Vort\|_{\HH^1}^{\frac{pq}{2}} ds \right)
\notag\\
& \quad \leq C\E\left( \|\Lambda^r \Vort_0\|_{L^2}^q + \| \Vort_{0}\|_{H^1}^{\frac{pq}{2}} 
+ \|\Vort_0\|_{L^{\frac{pp_\gamma q}{4}}}^{\frac{p p_\gamma q}{4}} \right) 
+ C T\left(1+ \|\sigma\|_{\HH^{{r+m}}}^q + \| \sigma\|_{\HH^1}^{\frac{pq}{2}} 
+ \|\sigma\|_{\LL^{\frac{pp_\gamma q}{4}}}^{\frac{p p_\gamma q}{4}} \right).
\label{eq:KB:moment:3}
\end{align}
for any $T> 0$ and $q\geq 2$, with $C=C(q,m,T_m)$, where $p$ is given by \eqref{eq:q:gamma} and $p_\gamma$ is given by \eqref{eq:p:gamma}.

\subsection{Exponential Moments in \texorpdfstring{$L^p$, $p\geq 2$}{Lp, p > 2}}
\label{sec:Vort:exp}
The purpose of this section is to establish exponential moments for the $L^p$ norms of the solution, i.e.  prove Theorem~\ref{thm:exponential}.

\subsubsection{Pointwise in time exponential moments}
\label{sec:Vort:L2} 
In this subsection we obtain pointwise in time exponential moment bounds for the $L^p$ norms of solutions, that grow only linearly in time, i.e. estimate \eqref{eq:expo:mom:I}. 

For $p\geq 2$ and $\kappa > 0$, to be determined, we now consider the function
\begin{align} 
\psi_{\kappa}(x) = \exp \left( \kappa  (1+  x)^{2/p}\right) \label{eq:phi}
\end{align}
which is smooth in a neighborhood of $[0,\infty)$. We note that
\begin{align*} 
&\psi'_\kappa(x) = \frac{2\kappa }{p } (1+  x)^{\frac{2-p}{p}} \psi_\kappa(x)  \notag\\
&\psi''_\kappa(x) = -\frac{2\kappa  (p-2)}{p^2} (1+  x)^{\frac{2-2p}{p}} \psi_ \kappa(x) 
+ \frac{4 \kappa^{2}}{p^2} (1+  x)^{\frac{2(2-p)}{p}}\psi_ \kappa(x) \leq  \kappa  (1+  x)^{\frac{2-p}{p}} \psi_ \kappa'(x).
\end{align*}
Let $x(t) = \| \Vort(t) \|_{L^p}^p$. By the It\=o Lemma, and \eqref{eq:pk:Ito} we have that 
\begin{align} 
d \psi_ \kappa(x) = \psi'_ \kappa(x) \left(- pT_{1,p} + \frac{p(p-1)}{2} T_{2,p} \right)dt 
	+ \frac{p^2}{2} \psi''_ \kappa(x) \sum_{l \in \ZZF}    S_{l,p}^2 dt 
	+ p \sum_{l \in \ZZF} \psi'_ \kappa (x) S_{l,p} dW^l \label{eq:Ito:Lp:2}
\end{align}
Using \eqref{eq:CC:lower:bound} and \eqref{eq:pk:sigma:upper} we find 
\begin{align*}
- p T_{1,p} + \frac{p(p-1)}{2} T_{2,p} \leq - \frac{1}{C_\gamma} \| \Vort\|_{L^{p}}^{p} + C  \|\sigma\|_{\LL^{p}}^{p}. 
\end{align*}
with $C_\gamma\geq 1$, while the H\"older inequality implies
\begin{align*}
 \sum_{l \in \ZZF}    S_{l,p}^2 \leq \sum_{l\in \ZZF} \left( \int_{\TT} \sigma_l \Vort^{p-1} dx \right)^2
 \leq \| \sigma\|_{\LL^p}^2 \| \Vort \|_{L^{p }}^{2(p -1)}
\end{align*}
Hence we obtain
\begin{align}
d \psi_ \kappa(x) 
&\leq  \psi'_ \kappa(x) \left( - \frac{1}{C_\gamma} \| \Vort\|_{L^{p}}^{p} + C \|\sigma\|_{\LL^{p}}^{p} +  \kappa  (1+ \| \Vort \|_{L^{p}}^{p} )^{\frac{2-p}{p}} \| \sigma\|_{\LL^{p}}^2 \| \Vort \|_{L^{p}}^{2(p-1)}  \right)dt \notag\\
&\qquad + p \psi'_ \kappa (x)  \sum_{l \in \ZZF}S_{l,p} dW^l \notag\\
&\leq  \psi'_ \kappa(x) \left( - \frac{1}{C_\gamma} \| \Vort\|_{L^{p}}^{p} + C  \|\sigma\|_{\LL^{p}}^{p} +  \kappa  \| \sigma\|_{\LL^{p}}^2  \| \Vort \|_{L^{p}}^{p}  \right)dt + p \psi'_ \kappa (x)  \sum_{l \in \ZZF}S_{l,p} dW^l
  \label{eq:psi:eps:1}
\end{align}
Now for any $ \kappa =  \kappa(p,  \| \sigma\|_{\LL^{p}})$ sufficiently small so that 
\begin{align}
\kappa   \left( 1+ C_\gamma \| \sigma\|_{\LL^{p}}^2 \right)  \leq \frac 12,
\label{eq:eps:cond:1}
\end{align}
where $C(p)$ is the constant in \eqref{eq:psi:eps:1},
and any $T>0$,  by integrating \eqref{eq:psi:eps:1} we find
\begin{align}
   \E \psi_ \kappa(x(T) ) \leq \E\psi_ \kappa(x(0)) + \E \int_0^T  \psi'_ \kappa (x(t))   
   \left( - \frac{1}{2 C_\gamma} \| \Vort\|_{L^{p}}^{p} + C   \|\sigma\|_{\LL^{p}}^{p} \right) ds.
   \label{eq:Lp:exp:1}
\end{align}
Using \eqref{eq:eps:cond:1} we next estimate
\begin{align}
\psi'_ \kappa (x) \left( - \frac{1}{4 C_\gamma} \| \Vort\|_{L^{p}}^{p} + C    \|\sigma\|_{\LL^{p}}^{p} \right) 
&\leq\frac{2  \kappa }{p }  \exp\left(\kappa  (1 + \| \Vort\|_{L^{p}}^{p})^{2/p} \right) \left( - \frac{1}{4 C_\gamma} \| \Vort\|_{L^{p}}^{p } + C \|\sigma\|_{\LL^{p}}^{p} \right) \notag\\
&\leq \frac{2  \kappa }{p }  \exp\left( \kappa (1 + 4 C C_\gamma    \| \sigma\|_{\LL^{p}}^{p})^{2/p} \right) \leq C  \kappa .
\label{eq:Lp:exp:2}
\end{align}
To see this one has to treat separately the cases when $\|\Vort\|_{L^{p}}^{p}$ is larger or smaller than $4 C C_\gamma \| \sigma \|_{\LL^{p}}^{p}$.
 Combining \eqref{eq:Lp:exp:1} with \eqref{eq:Lp:exp:2} we obtain that 
\begin{align}
\E \exp\left(  \kappa   \| \Vort(T)\|_{L^{p}}^{2} \right) 
+ \frac{1}{4 p C_\gamma} \E \int_0^T  \exp\left(   \kappa   \| \Vort(t)\|_{L^{p}}^{2} \right) dt  \leq C \E \exp\left(  \kappa   \| \Vort_0\|_{L^{p}}^{2} \right)  +  C \kappa T 
\label{eq:Lp:exponential}
\end{align}
where $ \kappa =  \kappa(p,  \|\sigma \|_{\LL^{p}}^{-1})$ is such that \eqref{eq:eps:cond:1} holds, and $C = C (p)$. We note that the right side of \eqref{eq:Lp:exponential} grows only linearly in $p$.

\subsubsection{Exponential moments for the time-integral}
In this subsection we prove estimates for the exponential of the time integral of the $L^p$ norms, i.e. bound \eqref{eq:expo:mom:II}.  For this purpose, let $p\geq2 $ be even and 
\begin{align*} 
X(t) =  (1 +  \| \Vort(t)\|_{L^p}^p)^{2/p} + \eps \int_0^t \| \Vort(s)\|_{L^p}^{2} ds
\end{align*}
where $\eps = \eps(p) > 0$ is to be determined later,
and apply the It\=o lemma to the $C^2$ function 
\[ 
\psi_\kappa(X) = \exp\left( \kappa X \right).
\] 
In order to do this, we first use \eqref{eq:pk:Ito} and obtain
\begin{align} 
d X &=   \frac{2}{p} (1 + \|\Vort\|_{L^p}^p)^{\frac{2-p}{p}} d\|\Vort\|_{L^p}^p - \frac{ (p-2)}{p^2} (1+ \|\Vort\|_{L^p}^p)^\frac{2-2p}{p} d\|\Vort\|_{L^p}^p d \|\Vort\|_{L^p}^p + \eps  \|\Vort\|_{L^p}^2 dt\notag\\
&= \frac{2}{p} (1 + \|\Vort\|_{L^p}^p)^{\frac{2-p}{p}}  \left( - p T_{1,p} dt + \frac{p(p-1)}{2} T_{2,p} dt + p \langle \sigma,  \Vort^{p-1} \rangle dW \right) \notag\\
&\qquad - (p-2) (1+ \|\Vort\|_{L^p}^p)^\frac{2-2p}{p} |\langle \sigma, \Vort^{p-1} \rangle|^2 dt + \eps \|\Vort\|_{L^p}^2 dt.
\label{eq:ITO:X}
\end{align}
Here we used the shorthand notation $\langle \sigma, g \rangle dW = \sum_{l\in\ZZF} \int_{\TT} \sigma_l g dx dW^l$.
Therefore, the It\=o lemma applied to $\psi_\kappa(X)$ yields
\begin{align} 
d \psi_\kappa(X) &= \psi_\kappa'(X) dX + \frac 12 \psi_{\kappa}''(X) dX dX \notag\\
&= \kappa \psi_\kappa(X) \Bigl(   - 2  (1 +  \|\Vort\|_{L^p}^p)^{\frac{2-p}{p}} T_{1,p} +  (p-1) (1 +  \|\Vort\|_{L^p}^p)^{\frac{2-p}{p}}  T_{2,p} \notag\\
&\qquad \qquad \qquad +\eps\|\Vort\|_{L^p}^2   - (p-2) (1+ \|\Vort\|_{L^p}^p)^\frac{2-2p}{p} |\langle \sigma, \Vort^{p-1} \rangle|^2\Bigr)dt \notag\\
&\qquad + \kappa \psi_\kappa(X)  2 (1 + \|\Vort\|_{L^p}^p)^{\frac{2-p}{p}}   \langle \sigma,\Vort^{p-1} \rangle dW  \notag\\
&\qquad +  \kappa^2  \psi_\kappa(X)   (1+  \|\Vort\|_{L^p}^p)^\frac{2-p}{p} \sum_{l\in \ZZF} \left(     \int \sigma_l  \Vort^{p-1} dx\right)^2 dt. 
\label{eq:ITO:X:2}
\end{align}
Using \eqref{eq:CC:lower:bound}--\eqref{eq:pk:sigma:upper}, the H\"older inequality, and the definition of $\psi_\kappa$ we thus infer
\begin{align} 
d \psi_\kappa(X) &\leq \kappa \psi_\kappa(X) \Bigl(   - \frac{1}{C_\gamma} (1 + \|\Vort\|_{L^p}^p)^{\frac{2-p}{p}} \|\Vort\|_{L^p}^p +C \|\sigma\|_{\LL^p}^2+ \eps \|\Vort\|_{L^p}^2  +    C \kappa   \|\sigma\|_{\LL^p}^2 \|\Vort\|_{L^p}^2 \Bigr) dt \notag\\
&\qquad + 2 \kappa \psi_\kappa(X)  (1 + \|\Vort\|_{L^p}^p)^{\frac{2-p}{p}}   \langle \sigma,\Vort^{p-1} \rangle dW .
\label{eq:ITO:X:3}
\end{align}
Next, we estimate
\begin{align*} 
- \frac{1}{C_\gamma} (1 + \|\Vort\|_{L^p}^p)^{\frac{2-p}{p}} \|\Vort\|_{L^p}^p 
&= - \frac{1}{C_\gamma} (1 + \|\Vort\|_{L^p}^p)^{\frac{2-p}{p}} (1 + \|\Vort\|_{L^p}^p) +  \frac{1}{C_\gamma} (1 + \|\Vort\|_{L^p}^p)^{\frac{2-p}{p}}  \notag\\
&= - \frac{1}{C_\gamma} (1 + \|\Vort\|_{L^p}^p)^{2/p} + 1 \leq - \frac{1}{C_\gamma} \|\Vort\|_{L^p}^2 + 1
\end{align*}
since  $C_\gamma \geq 1$ and $p\geq 2$. We thus obtain
\begin{align} 
d \psi_\kappa(X) 
&\leq \kappa \psi_\kappa(X) \Bigl( -  \frac{1}{C_\gamma} \|\Vort\|_{L^p}^2  + C( 1+ \|\sigma\|_{\LL^p}^2) +  \eps \|\Vort\|_{L^p}^2  +  C \kappa   \|\sigma\|_{\LL^p}^2 \|\Vort\|_{L^p}^2 \Bigr) dt \notag\\
&\qquad + 2 \kappa \psi_\kappa(X)  (1 + \|\Vort\|_{L^p}^p)^{\frac{2-p}{p}}   \langle \sigma,\Vort^{p-1} \rangle dW.
\label{eq:ITO:X:4}
\end{align}
We next choose $\kappa, \eps\in (0,1]$  to be sufficiently small so that
\begin{align} 
&C  \kappa  (1 + \|\sigma\|_{\LL^p}^2) \leq \frac{1}{2C_\gamma}, \qquad  \eps \leq \frac{1}{2C_\gamma},
\label{eq:eps:kappa}
\end{align}
where $C_\gamma$ is the constant appearing next to the negative term on the right side of \eqref{eq:ITO:X:4}.
With this choice of $\eps$ and $\kappa$, we may now integrate \eqref{eq:ITO:X:4} on $[0,t]$, take expected values  to obtain
\begin{align} 
\E \psi_\kappa(X(t)) \leq \E \psi_\kappa(X(0))   +  \int_0^t \psi_\kappa(X(s)) ds.
\label{eq:Lp:exp:4}
\end{align}
The constant ($1$) in front of the second term on the right side of \eqref{eq:Lp:exp:4} is independent of $\sigma$ because due to \eqref{eq:eps:kappa} we have $C \kappa ( 1+ \|\sigma\|_{\LL^p}^2) \leq 1$.
From \eqref{eq:Lp:exp:4} and the Gr\"onwall inequality we infer
\begin{align} 
\E \exp \left(  \kappa \| \Vort(t)\|_{L^p}^2 +  \eps \kappa  \int_0^t \| \Vort(s)\|_{L^p}^{2} ds \right) \leq \E \psi_\kappa(X(T)) \leq e^T \E \psi_\kappa(X(0))
\label{eq:Lp:exp:mom:full}
\end{align}
for any $T>0$, and in particular
\begin{align} 
\E \exp \left(  \eps \kappa   \int_0^T \|\Vort(s)\|_{L^p}^2 ds   \right) 
\leq  e^T \E \exp\left(   \kappa (1 +\|\Vort_0\|_{L^p}^p)^{2/p} \right)
\label{eq:Lp:exp:mom}
\end{align}
holds for $T>0$ and $\eps,\kappa$ chosen such that \eqref{eq:eps:kappa} holds. Note that without loss of generality we may take $\eps = \kappa$, which proves \eqref{eq:expo:mom:II}.  

\section{The Feller Property. Existence and Regularity Properties of Invariant Measures}
\label{sec:Feller:existence}
\setcounter{equation}{0}

In this section we apply the a priori moment bounds derived in the Section~\ref{sec:moments}
to establish that $\{P_t\}_{t \geq 0}$ is Feller and then to infer the existence and regularity
properties of invariant measures for the dual semigroup.

\subsection{Instantaneous Smoothing and the Feller Property}
\label{sec:feller}

In order to establish the Feller property for $P_t$ we will need to following continuous
dependence estimates which rely on a smoothing properties of \eqref{eq:frac:SNSE} 
established in Theorem~\ref{thm:smoothing}
\begin{proposition}[\bf Continuous Dependence in $H^r$]
\label{prop:cont:dep}
Fix $m \geq r$, $T_m > 0$ and define
\begin{align} 
\alpha(t)  = 
\begin{cases}
tm T^{-1}_m, & t \in [0, T],\\
m, & t > T_m.
\end{cases}
\label{eq:alpha:growth:for:T}
\end{align}
Then for any $\Vort_0, \bar\Vort_0 \in H^r$ and for any   $\tau \geq 0$
 \begin{align} 
    \| \Vort(\tau, \Vort_0) - &\Vort(\tau, \bar\Vort_0) \|_{H^{r-1+\alpha(\tau)}}^2 
    \notag\\
     &\leq \| \Vort_0 - \bar \Vort_0\|_{H^{r-1}}^2 \exp\left( C \int_0^\tau 1 + \|\Vort(t,\Vort_0)\|_{H^{r+\alpha(t)}}^2 + \|\Vort(t, \bar{\Vort}_0)\|_{H^{r+\alpha(t)}}^2 dt \right)
     \label{eq:cont:est}
\end{align}
a.s., where the deterministic constant $C$ depends on $r, m, \gamma, T_m$ but is independent 
of $\tau$ and  $\Vort_0, \bar\Vort_0 \in H^r$.
\end{proposition}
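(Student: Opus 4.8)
The plan is to derive an evolution equation for the difference $\rho = \Vort(\cdot, \Vort_0) - \Vort(\cdot, \bar\Vort_0)$, estimate it in the norm $H^{r-1+\alpha(\tau)}$ (one derivative below the natural solution regularity), and close a Gr\"onwall-type inequality whose exponential integrand is precisely the quantity appearing on the right of \eqref{eq:cont:est}. The key structural observation is that since both solutions are forced by the same noise $\sigma W$, the difference $\rho$ solves a \emph{random PDE} with no stochastic forcing term: subtracting the two copies of \eqref{eq:frac:SNSE} gives
\begin{align*}
\partial_t \rho + \Lambda^\gamma \rho + \bfU \cdot \nabla \rho + \bfV \cdot \nabla \Vort(\cdot, \bar\Vort_0) = 0,
\end{align*}
where $\bfU = K \ast \Vort(\cdot, \Vort_0)$, $\bfV = K \ast \rho$, and $\rho(0) = \Vort_0 - \bar\Vort_0$. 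This is a pathwise identity, which is why the estimate \eqref{eq:cont:est} can hold almost surely with a deterministic constant.

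First I would test this equation against $\Lambda^{2 s(\tau)} \rho$ with $s(\tau) = r - 1 + \alpha(\tau)$, using the It\=o/energy identity in the time-dependent norm. As in the proof of Theorem~\ref{thm:smoothing}, differentiating the norm in which $\alpha$ itself varies produces a logarithmic term $\dot\alpha \|\Lambda^{s}(\log\Lambda)^{1/2}\rho\|_{L^2}^2$, which is absorbed into half of the dissipation $\|\Lambda^{s + \gamma/2}\rho\|_{L^2}^2$ at high frequencies exactly as in \eqref{eq:N:m:gamma}--\eqref{eq:log:Hs}, leaving a harmless low-frequency $\|\rho\|_{H^{r-1}}^2$ contribution. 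The transport term $\bfU \cdot \nabla \rho$ is handled by a commutator estimate in the spirit of Lemma~\ref{lem:commutator}, writing $\int \Lambda^s(\bfU \cdot \nabla \rho)\Lambda^s \rho\, dx = \int [\Lambda^s, \bfU \cdot \nabla]\rho \, \Lambda^s\rho\, dx$ and bounding the commutator by $\|\Vort(\cdot,\Vort_0)\|_{H^{s+1}}$ times appropriate norms of $\rho$, which is where the $\|\Vort(t,\Vort_0)\|_{H^{r+\alpha(t)}}$ factor enters. The crucial point — and the reason one works a full derivative below — is that $s + 1 = r + \alpha(\tau)$, so the commutator only ever sees $\rho$ in norms of order at most $s + \gamma/2$ (absorbable by dissipation) and the \emph{background} solution in $H^{r+\alpha}$, for which Theorem~\ref{thm:smoothing} supplies finite moments.

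The main obstacle is the \emph{stretching term} $\bfV \cdot \nabla \Vort(\cdot,\bar\Vort_0)$, since $\bfV = K\ast\rho$ couples $\rho$ to a \emph{full derivative} of the second solution. After pairing with $\Lambda^{2s}\rho$ and integrating by parts, I expect a bound of the form $\|\bfV\|_{\cdot}\,\|\Vort(\cdot,\bar\Vort_0)\|_{H^{s+1}}\,\|\rho\|_{H^{s+\gamma/2}}$; the factor $\|\Vort(\cdot,\bar\Vort_0)\|_{H^{s+1}} = \|\Vort(\cdot,\bar\Vort_0)\|_{H^{r+\alpha}}$ is again controlled by Theorem~\ref{thm:smoothing}, and $\bfV$ is one derivative smoother than $\rho$ via Biot--Savart, so a careful Sobolev/Gagliardo--Nirenberg interpolation lets me split off $\tfrac14\|\rho\|_{H^{s+\gamma/2}}^2$ into the dissipation and retain a coefficient $C(1 + \|\Vort(\cdot,\Vort_0)\|_{H^{r+\alpha}}^2 + \|\Vort(\cdot,\bar\Vort_0)\|_{H^{r+\alpha}}^2)$ multiplying $\|\rho\|_{H^s}^2$. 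Collecting all terms yields
\begin{align*}
\frac{d}{dt}\|\rho\|_{H^{s(t)}}^2 \leq C\left(1 + \|\Vort(t,\Vort_0)\|_{H^{r+\alpha(t)}}^2 + \|\Vort(t,\bar\Vort_0)\|_{H^{r+\alpha(t)}}^2\right)\|\rho\|_{H^{s(t)}}^2,
\end{align*}
and Gr\"onwall's inequality, together with $s(0) = r-1$ so that $\|\rho(0)\|_{H^{s(0)}}^2 = \|\Vort_0 - \bar\Vort_0\|_{H^{r-1}}^2$, gives \eqref{eq:cont:est} directly. I would make sure the constant $N_*$ absorbing the logarithmic term depends only on $\gamma, m, T_m$ (as in \eqref{eq:N:m:gamma}), keeping $C$ deterministic and $\tau$-independent as claimed.
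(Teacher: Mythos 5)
Your proposal follows essentially the same route as the paper's proof: the same unforced difference equation for $\rho$, the same time-dependent norm $H^{s(t)}$ with $s(t)=r-1+\alpha(t)$ and absorption of the logarithmic term as in \eqref{eq:log:Hs}, a commutator estimate for the transport term with partial absorption into the dissipation, a product (Kato--Ponce) bound for the vortex-stretching term exploiting the one-derivative gain of the Biot--Savart kernel, and Gr\"onwall. The only cosmetic difference is that you assign the transport to $\bfU=K\ast\Vort(\cdot,\Vort_0)$ and the stretching to $\Vort(\cdot,\bar\Vort_0)$ while the paper does the reverse; by symmetry this changes nothing.
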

\begin{proof}[proof of Proposition~\ref{prop:cont:dep}]
For brevity of notation let $\Vort(t) = \Vort(t, \Vort_0)$, $\bar{\Vort}(t) = \Vort(t, \bar{\Vort}_0)$ and $\rho = \Vort - \bar{\Vort}$.   
Then $\rho$ satisfies:
\begin{align}
  \pd{t} \rho + \Lambda^\gamma \rho + B(\rho, \Vort) + B(\bar\Vort, \rho) =0, \quad \rho(0) =  \Vort_0 - \bar \Vort_0.
\label{eq:cont:eqn}
\end{align}
As in the proof of Theorem~\ref{thm:smoothing} we denote 
\begin{align*}
  s(t) = r - 1 + \alpha(t)
\end{align*}
and find that
\begin{align*}
  \frac{1}{2}\ddt \| \Lambda^s \rho\|^{2} + \|\Lambda^{s + \gamma/2} \rho\|^2
  -& 2 \dot{\alpha}(t) \|  \Lambda^{{s}} (\log \Lambda)^{1/2} \rho \|^2
  \notag\\
   &= 
      - \langle \Lambda^s B(\rho, \Vort), \Lambda^s \rho \rangle - \langle \Lambda^s  B(\bar\Vort, \rho), \Lambda^s \rho \rangle
      %= T_1 + T_2. 
\end{align*}
Repeating the computations leading to \eqref{eq:log:Hs} we infer
\begin{align}
 \ddt \| \Lambda^s \rho\|^{2} + \|\Lambda^{s + \gamma/2} \rho\|^2
 \leq& C \|\Lambda^s \rho\|^2 + 
      | \langle \Lambda^s B(\rho, \Vort), \Lambda^s \rho \rangle | +  | \langle \Lambda^s  B(\bar\Vort, \rho), \Lambda^s \rho \rangle |
      \notag\\
      =&  C \|\Lambda^s \rho\|^2 + T_1 + T_2.
 \label{eq:lambda:Hs:eqn}
\end{align}
for a constant $C = C(m, T_m)$.

For the first term $T_1$, we use \eqref{eq:Hs:product} and infer
\begin{align}
|T_1| 
&\leq \|\Lambda^s ((K \ast \rho) \cdot \nabla \Vort)\|_{L^2}\|\Lambda^s \rho\|_{L^2} \notag\\
&\leq C\left(\| \Lambda^s (K \ast \rho) \|_{L^2} \| \nabla \Vort\|_{L^\infty}+ \| \Lambda^s \nabla \Vort\|_{L^2} \|  K \ast \rho \|_{L^\infty} \right) \|\Lambda^s \rho\|_{L^2} \notag\\
&\leq C\left(\| \Lambda^{s-1} \rho \|_{L^2} \| \Lambda^{s+1} \Vort\|_{L^2}  + \| \Lambda^{s+1} \Vort\|_{L^2} \|   \Lambda^{s-1} \rho \|_{L^2} \right) \|\Lambda^s \rho\|_{L^2} \notag\\
&\leq C\| \Lambda^{r+\alpha} \Vort\|_{L^2}  \|\Lambda^s \rho\|_{L^2}^2.
\label{eq:T1:est:cont:dep}
\end{align}
On the other hand, for $T_2$ we take advantage cancelations and make use of the commutator estimate \eqref{eq:comm:abstract} to bound
\begin{align}
|T_2| 
&\leq C \|\Lambda^s ((K \ast \bar \Vort) \cdot \nabla \rho) - (K \ast \bar\Vort ) \cdot \nabla (\Lambda^s \rho)\|_{L^2}\|\Lambda^s \rho\|_{L^2} \notag\\
&\leq C \left( \| \nabla (K \ast \bar \Vort) \|_{L^{8/\gamma}}   \|  \Lambda^{s} \rho  \|_{L^{8/(4-\gamma)}} +   \| \Lambda^s (K \ast \bar \Vort)   \|_{L^{8/\gamma}}  \| \nabla \rho \|_{L^{8/(4-\gamma)}}   \right) \|\Lambda^s \rho\|_{L^2}  \notag\\
&\leq C \left( \| \ \bar \Vort \|_{L^{8/\gamma}}   \|  \Lambda^{s+\gamma/4} \rho \|_{L^{2}} +   \| \Lambda^{s-1}  \bar \Vort   \|_{L^{8/\gamma}}  \|  \Lambda^{\gamma/4} \nabla \rho\|_{L^{2}}   \right) \|\Lambda^s \rho\|_{L^2}  \notag\\
&\leq C \| \Lambda^{s+1} \bar{\Vort} \|_{L^2} \|\Lambda^{s + \gamma/2} \rho\|_{L^2}  \|\Lambda^s \rho\|_{L^2} \notag\\
&\leq \frac{1}{2}  \|\Lambda^{s + \gamma/2} \rho\|_{L^2}^2 + C\| \Lambda^{r + \alpha} \bar{\Vort}\|_{L^2}^2 \|\Lambda^s \rho\|_{L^2}^2.
\label{eq:T2:est:cont:dep}
\end{align}
Combining \eqref{eq:T1:est:cont:dep} and \eqref{eq:T2:est:cont:dep} with \eqref{eq:lambda:Hs:eqn} and applying the Gr\"onwall lemma
yields the desired result \eqref{eq:cont:est}.
\end{proof}

With Proposition~\ref{prop:cont:dep} now, in hand we now turn to establish the Feller property.  Note that, since growth of the distance between two solutions 
is controlled by the growth of \emph{each} of the individual solutions in \eqref{eq:cont:est}, we need to make more careful use of stopping time arguments to
establish Feller property than for the 2D Navier-Stokes equations on the $L^2$ phase space.  

\begin{proposition}
\label{prop:Feller}
The Markov semigroup is Feller on $H^r$ for any $r > 2$, i.e.
\begin{align*}
  P_T : C_b(H^r) \to C_b(H^r)
\end{align*}
for any $T \geq 0$.
\end{proposition}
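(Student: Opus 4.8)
The plan is to prove that for any bounded continuous $\phi$ on $H^r$ the map $\Vort_0 \mapsto P_T \phi(\Vort_0) = \E \phi(\Vort(T,\Vort_0))$ is continuous. Fix $\Vort_0 \in H^r$ and a sequence $\Vort_0^{(n)} \to \Vort_0$ in $H^r$. The naive approach — using the continuous dependence estimate \eqref{eq:cont:est} together with dominated convergence — fails directly because the exponential on the right-hand side of \eqref{eq:cont:est} involves the integral of $\|\Vort(t,\Vort_0)\|_{H^{r+\alpha(t)}}^2$, and we only have \emph{polynomial} moments for this quantity from Theorem~\ref{thm:smoothing}, not exponential ones. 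Thus the random variable controlling the rate of convergence is a.s. finite but not uniformly integrable in an obvious way, so I cannot pass the limit inside the expectation without extra care. This is precisely where the stopping time and density argument becomes necessary, and it is the main obstacle.

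First I would apply Proposition~\ref{prop:cont:dep} with $m \geq r$ and a small $T_m \leq T$, so that $\alpha(T) = m \geq r$ and in particular $r - 1 + \alpha(T) \geq r$; this gives, for the smoothing time fully elapsed, control of the $H^r$ (indeed $H^{r-1+m}$) distance of the two solutions at time $T$ in terms of the $H^{r-1}$ distance of the initial data times the exponential factor. For each $R > 0$ introduce the stopping time
\begin{align*}
  \tau_R^{(n)} = \inf\left\{ t \geq 0 \colon \int_0^t 1 + \|\Vort(s,\Vort_0)\|_{H^{r+\alpha(s)}}^2 + \|\Vort(s,\Vort_0^{(n)})\|_{H^{r+\alpha(s)}}^2 \, ds \geq R \right\},
\end{align*}
and split the expectation $\E |\phi(\Vort(T,\Vort_0^{(n)})) - \phi(\Vort(T,\Vort_0))|$ into the event $\{\tau_R^{(n)} > T\}$ and its complement. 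On $\{\tau_R^{(n)} > T\}$ the integral in \eqref{eq:cont:est} is bounded by $R$, so the exponential factor is deterministically at most $e^{CR}$; consequently $\|\Vort(T,\Vort_0^{(n)}) - \Vort(T,\Vort_0)\|_{H^r} \leq \|\Vort_0^{(n)} - \Vort_0\|_{H^{r-1}} e^{CR/2} \to 0$ as $n \to \infty$, and since $\phi$ is continuous and bounded this part of the expectation tends to $0$ by dominated convergence for each \emph{fixed} $R$.

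Second, I would control the complementary event. On $\{\tau_R^{(n)} \leq T\}$ the two terms differ by at most $2\|\phi\|_\infty$, so the contribution is bounded by $2\|\phi\|_\infty \, \Prb(\tau_R^{(n)} \leq T)$. By Chebyshev's inequality and the polynomial Sobolev moment bound \eqref{eq:Smoothing:moments} of Theorem~\ref{thm:smoothing} (applied to both $\Vort(\cdot,\Vort_0)$ and $\Vort(\cdot,\Vort_0^{(n)})$, with $q=2$),
\begin{align*}
  \Prb(\tau_R^{(n)} \leq T) \leq \frac{1}{R}\, \E \int_0^T 1 + \|\Vort(s,\Vort_0)\|_{H^{r+\alpha(s)}}^2 + \|\Vort(s,\Vort_0^{(n)})\|_{H^{r+\alpha(s)}}^2 \, ds \leq \frac{C(T)}{R}\bigl(1 + \PP(\E\|\Vort_0\|_{H^r}) + \PP(\|\sigma\|)\bigr),
\end{align*}
where the right-hand side is bounded uniformly in $n$ because $\Vort_0^{(n)} \to \Vort_0$ in $H^r$ keeps $\E\|\Vort_0^{(n)}\|_{H^r}$ bounded. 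Thus this contribution is $\leq C/R$ uniformly in $n$. The argument is then completed by the standard two-parameter limit: given $\eps > 0$, first choose $R$ large enough that the tail term is below $\eps/2$ for all $n$, then choose $n$ large enough that the main term on $\{\tau_R^{(n)} > T\}$ is below $\eps/2$. This shows $P_T\phi(\Vort_0^{(n)}) \to P_T\phi(\Vort_0)$, establishing the Feller property. The one technical point to verify carefully is that the almost-sure continuous dependence bound \eqref{eq:cont:est} holds with $\|\Vort_0^{(n)}-\Vort_0\|_{H^{r-1}}$ rather than the $H^r$ norm, which is exactly what Proposition~\ref{prop:cont:dep} provides and which is crucial so that the gain of regularity from smoothing compensates the one-derivative loss in the linearized nonlinearity.
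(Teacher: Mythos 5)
Your argument is correct and follows the same overall strategy as the paper's proof: a stopping time tied to the quantity appearing in the exponential of the continuous dependence estimate \eqref{eq:cont:est}, a Chebyshev bound on the bad event via the linear-in-time moment bound \eqref{eq:Smoothing:moments}, and a two-parameter limit. The one place you genuinely diverge is in how the continuity of $\phi$ is exploited on the good event $\{\tau_R^{(n)} > T\}$: the paper replaces $\phi$ by a Lipschitz approximation $\tilde\phi$ (uniformly close to $\phi$ on a ball of $H^{r+1}$, which is compact in $H^r$, via Rellich and Stone--Weierstrass) and then uses the Lipschitz constant to convert the pathwise $H^r$ bound into a quantitative $\delta$--$\eps$ modulus of continuity for $P_T\phi$. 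You instead observe that on $\{\tau_R^{(n)}>T\}$ the distance $\|\Vort(T,\Vort_0^{(n)})-\Vort(T,\Vort_0)\|_{H^r}$ is bounded \emph{deterministically} by $\|\Vort_0^{(n)}-\Vort_0\|_{H^{r-1}}e^{CR/2}\to 0$, so pointwise continuity of $\phi$ at $\Vort(T,\Vort_0)(\omega)$ together with the bound $2\|\phi\|_\infty$ gives convergence of that piece of the expectation by dominated convergence; this is valid and dispenses with the density/approximation step entirely, at the cost of yielding only sequential continuity rather than an explicit modulus. Two small points in your favor worth noting: your stopping time only monitors the time integral of the $H^{r+\alpha(t)}$ norms, which is all that \eqref{eq:cont:est} actually requires (the paper's $\tau_K$ also tracks the supremum and the $H^{r+\gamma/2+\alpha}$ integral, which is harmless but not needed), and your choice $m\geq r$ literally matches the stated hypothesis of Proposition~\ref{prop:cont:dep}, whereas the paper's proof invokes it with $m=1$ (which suffices for the estimate but sits outside the proposition as stated).
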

\begin{proof}[Proof of Proposition~\ref{prop:Feller}]
Let $\phi \in C_b$ and $T> 0$ be given.  Fix $\Vort_0 \in H^r$.
For any $\bar{\Vort}_0$ in $H^r$ and $K > 0$ we define the stopping times
\begin{align*}
  &\tau_K(\bar{\Vort}_0) := \inf_{t \geq 0} \left\{ \sup_{s \in [0,t]} \|\Vort(s, \bar{\Vort}_0)\|^2_{H^{r +\alpha(s)}} 
     + \int_0^t \|\Vort(t, \bar{\Vort}_0)\|^2_{H^{r + \gamma/2 +\alpha(t)}} dt  \geq \kappa_1 t + K\right\},
\end{align*}
where $\alpha$ is defined precisely as in \eqref{eq:alpha:growth:for:T} with $m = 1$ and $T_m = T$.  Here $\kappa_1 = C(T) \poly(\|\sigma\|_{\HH^{r+m}})$ is the constant appearing in \eqref{eq:Smoothing:moments}
corresponding to $T_m = T$ and $m = 1$.  In particular we emphasize that $\kappa_1$ is independent of $\bar{\Vort}_0$. We let \[ \tau_K(\Vort_0,\bar{\Vort}_0) = \tau_K(\Vort_0) \wedge \tau_K(\bar{\Vort}_0).\] 
Observe that, for any fixed $K > 0$, it immediately follow from Proposition~\ref{prop:cont:dep}, \eqref{eq:cont:est} that
 \begin{align} 
    \| \Vort(\tau_K(\Vort_0, \bar{\Vort}_0) \wedge T, \Vort_0) &- \Vort(\tau_K(\Vort_0, \bar{\Vort}_0) \wedge T, \bar\Vort_0) \|_{H^{r-1+\alpha(\tau_K(\Vort_0, \bar{\Vort}_0) \wedge T)}}^2 
    \notag\\
     &\leq \| \Vort_0 - \bar \Vort_0\|_{H^{r-1}}^2 \exp \left(C(T) \left( T(1+2\kappa_1) + 2K\right)\right).
     \label{eq:cont:est:tau_K}
\end{align}
On the other hand, making use of the estimate \eqref{eq:Smoothing:moments} we have
\begin{align}
  \Prb( \tau_K(\Vort_0, \bar{\Vort}_0) < T) 
  \leq&  \Prb( \tau_K(\Vort_0) < T) +  \Prb( \tau_K(\bar{\Vort}_0) < T)
  \notag\\
  \leq&   \Prb\left( \sup_{s \in [0,T]} \|\Vort(s, \Vort_0)\|^2_{H^{r +\alpha(s)}}  + \int_0^T \|\Vort(t, \Vort_0)\|_{H^{r+\alpha(t)}}^2 dt \geq \kappa_1T+ K \right) 
  \notag\\
           &+ \Prb \left( \sup_{s \in [0,T]} \|\Vort(s, \bar{\Vort}_0)\|^2_{H^{r +\alpha(s)}}  +\int_0^T \|\Vort(t,\bar\Vort_0)\|_{H^{r+\alpha(t)}}^2 dt \geq \kappa_1T+ K \right)
           \notag\\
  \leq& \frac{C(T) \poly(\|\Vort_0\| + \|\bar{\Vort}_0 \|)}{K}
  \label{eq:growth:est:tau_K}
\end{align}
where we note that $C$ is in particular independent of $K > 0$.
 Finally, for any $\bar\Vort_0 \in H^r$,
 \begin{align}
  |P_T \phi(\Vort_0) - P_T \phi(\bar\Vort_0)|
  =&   |\E \phi(\Vort(T, \Vort_0)) - \E \phi(\Vort(T,\bar\Vort_0))|
  \notag\\
  \leq&   |\E \phi(\Vort(T, \Vort_0)) - \E \phi(\Vort(T,\bar\Vort_0)) \indFn{\tau_K(\Vort_0, \bar{\Vort}_0) \geq T}|
  + 2\|\phi\|_\infty \Prb( \tau_K(\Vort_0, \bar{\Vort}_0) < T) 
  \notag\\
    \leq&   |\E \left(\phi(\Vort(T \wedge \tau_K(\Vort_0, \bar{\Vort}_0), \Vort_0)) - \E \phi(\Vort(T \wedge \tau_K(\Vort_0, \bar{\Vort}_0),\bar\Vort_0))\right)
    \indFn{\tau_K(\Vort_0, \bar{\Vort}_0) \geq T}|
    \notag\\
  &+ 2\|\phi\|_\infty \Prb( \tau_K(\Vort_0, \bar{\Vort}_0) < T).
  \label{eq:Feller:est:1}
\end{align}

Using estimates \eqref{eq:cont:est:tau_K}--\eqref{eq:Feller:est:1}, we now establish the desired continuity as follows.
Let $\eps >0$ be given. In view of \eqref{eq:growth:est:tau_K} we may choose $K$ such that 
\begin{align}
  2 \|\phi\|_\infty \Prb( \tau_K(\Vort_0, \bar{\Vort}_0) < T) \leq \epsilon/4
   \label{eq:eps:stopping:time}
\end{align}
for any $\bar{\Vort}_0 \in B_{H^r}(\Vort_0,1)$.
Having fixed $K$, we next use the Relich and Stone theorems and pick $\tilde{\phi} \in C_b(H^r)$, Lipschitz continuous, such that
\begin{align}
  \sup_{\bar{\Vort} \in B_{H^{r+1}}(\kappa_1T+K, 0) } |\tilde{\phi}(\bar{\Vort}) - \phi(\bar{\Vort})| \leq \eps/4.
     \label{eq:eps:lip:approx}
\end{align}
With these choices we apply the observations in \eqref{eq:eps:stopping:time}, \eqref{eq:eps:lip:approx} to \eqref{eq:Feller:est:1}
and using \eqref{eq:cont:est:tau_K}, we obtain, 
\begin{align*}
  |P_T &\phi(\Vort_0) - P_T \phi(\bar\Vort_0)| 
\notag\\
  \leq& 
    \frac{3\eps}{4} 
  + \|\nabla \tilde\phi\|_\infty
   \E \left(\| \Vort(T \wedge \tau_K(\Vort_0, \bar{\Vort}_0), \Vort_0) -\Vort(T \wedge \tau_K(\Vort_0, \bar{\Vort}_0),\bar\Vort_0)\|_{H^r}
    \indFn{\tau_K(\Vort_0, \bar{\Vort}_0) \geq T} \right)
    \notag\\
  \leq& \frac{3\eps}{4} 
  + \|\nabla \tilde\phi\|_\infty
 \| \Vort_0 - \bar \Vort_0\|_{H^{r}}^2 \exp \left(C(T)\left( T(1+2\kappa_1) + 2K\right)\right).
\end{align*}
for any $\bar{\Vort}_0 \in B_{H^r}(\Vort_0,1)$ and where we denote the Lipschitz constant 
associated with $\tilde \phi$ by 
$\|\nabla \tilde \phi\|_\infty$.  Thus, by choosing 
\begin{align*}
  \delta = \|\nabla \tilde\phi\|_\infty^{-1} \exp \left(-C(T)\left( T(1+2\kappa_1) + 2K\right)\right) \wedge 1
\end{align*}
we infer that $|P_T \phi(\Vort_0) - P_T \phi(\bar\Vort_0)| < \eps$ whenever
$\bar{\Vort}_0 \in B_{H^r}(\delta, \Vort_0)$.    Since $\eps>0$ was arbitrary,
the proof of Proposition~\ref{prop:Feller} is now complete. 
\end{proof}

Proposition~\ref{prop:Feller} is now used to establish the existence of an ergodic invariant measure
with classical arguments.
\begin{proof}[proof of Theorem~\ref{thm:existence}]
The existence of an invariant measure follows from Theorem~\ref{thm:smoothing} by showing that
the sequence of time average measures
\begin{align*}
  \mu_{T}(\cdot) := \frac{1}{T} \int_{0}^{T} P_{t}(0, \cdot) dt = \frac{1}{T} \int_{0}^{T} \Prb( \Vort(t, 0) \in \cdot) dt
\end{align*}
is tight in $Pr(H^{r})$.   For this point the linear time growth bound in \eqref{eq:Smoothing:moments} is crucial. The weak sub-sequential limit is then easily seen to be invariant with the aid of the Feller property.

Having shown that the set of invariant measures $\II$ is non-empty the existence of an \emph{ergodic} measure now follows from the following general argument. It is clear from linearity that $\II$ is convex, and due to the Feller property $\II$ is seen to be closed. 
Due to the Proposition~\ref{prop:higher} we can see that the set of invariant measures is tight, and hence $\II$ is compact. By Krein-Millman, $\II$ has an extremal point. Recalling that an invariant measure is ergodic if and only if it is an extremal point of $\II$  
(cf.~\cite{ZabczykDaPrato1996}), we hence conclude the proof of existence of an ergodic invariant measure.

The proof of Theorem~\ref{thm:existence} is therefore complete once the higher regularity properties of $\mu$ 
established.  This is carried out immediately below in Proposition~\ref{prop:higher}.
\end{proof}

\begin{remark}
The above strategy works for multiplicative noise as well.  Indeed, the strategy of proof in
Proposition~\ref{prop:Feller} has also been used recently in \cite{GHKVZ13} to establish the Feller property
for the 3D stochastic primitive equations.
\end{remark}

\subsection{Higher Regularity of Invariant Measures}

We next show, again with the aide of Theorem~\ref{thm:smoothing}
that any invariant measure $\mu$ of \eqref{eq:frac:SNSE} must be supported
on $C^\infty$.  
\begin{proposition}[\bf Higher Regularity] \label{prop:higher}
Fix $r > 0$ and consider an invariant measure $\mu$
for $\{P_t\}_{t \geq 0}$ defined as a semigroup on $H^r$ then, for
any $q \geq 2$,
\begin{align}
   \int_{H^r} \| \Vort\|^q_{H^s} d \mu(\Vort)  < \infty, \quad \mbox{ for every } s \geq r.
   \label{eq:Hs:Reg:Int}
\end{align}
In particular $\mu$ is supported on $H^s$ for every $s \geq r$.
\end{proposition}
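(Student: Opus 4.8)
The plan is to establish the moment bound \eqref{eq:Hs:Reg:Int} for an arbitrary invariant measure $\mu$ by combining the invariance $P_t^\ast \mu = \mu$ with the instantaneous smoothing estimate \eqref{eq:Smoothing:moments} of Theorem~\ref{thm:smoothing}. The key point is that $\mu$ is a priori supported only on $H^r$, but an invariant measure must, for each fixed $t > 0$, be reproduced by the smoothing flow; since the flow gains $m$ derivatives after time $T_m$, integrating the smoothing estimate against $\mu$ will force finiteness of the $H^{r+m}$-moment. Since $m \geq 0$ and $T_m > 0$ are arbitrary in Theorem~\ref{thm:smoothing}, running the argument for every $m$ yields the bound in every $H^s$ with $s \geq r$, and hence $\mu(C^\infty) = 1$.

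The first step is to fix $s \geq r$, $q \geq 2$, and choose $m = s - r \geq 0$ together with some fixed $T_m > 0$; set $T = T_m$ in Theorem~\ref{thm:smoothing} so that $\alpha(T) = m$ and $s(T) = r + m = s$. By invariance we may write, for a test functional obtained by truncating the norm,
\begin{align*}
\int_{H^r} \|\Vort\|_{H^s}^q \, d\mu(\Vort) = \int_{H^r} \E \|\Vort(T, \Vort_0)\|_{H^s}^q \, d\mu(\Vort_0),
\end{align*}
where on the right we have used $P_T^\ast \mu = \mu$ to push the measure forward by time $T$. The inner expectation is controlled pointwise in $\Vort_0$ by \eqref{eq:Smoothing:moments}: dropping the positive integral term and the supremum, we get
\begin{align*}
\E \|\Vort(T, \Vort_0)\|_{H^s}^q \leq \E \sup_{t \in [0,T]} \|\Lambda^{r+\alpha(t)} \Vort(t)\|_{L^2}^q \leq C \,\PP(\|\Vort_0\|_{H^r}) + C T \,\PP(\|\sigma\|_{\HH^{r+m}}),
\end{align*}
with $C = C(q, T_m, m)$ independent of $\Vort_0$. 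Integrating this in $\Vort_0$ against $\mu$ reduces the claim to showing that $\int_{H^r} \PP(\|\Vort_0\|_{H^r}) \, d\mu(\Vort_0) < \infty$, i.e. that $\mu$ has finite polynomial moments in its own phase space $H^r$.

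The second step is therefore to bootstrap the finiteness of $H^r$-polynomial moments of $\mu$. Here the natural route is again invariance combined with \eqref{eq:Smoothing:moments} applied with $m = 0$ (pure moment bound, no derivative gain): one obtains a bound of the form $\E \|\Vort(T, \Vort_0)\|_{H^r}^q \leq C\,\PP(\|\Vort_0\|_{H^r}) + CT\,\PP(\|\sigma\|_{\HH^r})$, but this alone is circular since the right side already contains the quantity we wish to bound. The standard remedy is to exploit the \emph{dissipative} structure: when $m=0$ the smoothing estimate should be refined to a genuine contraction, yielding for large enough norms an estimate $\E \|\Vort(T,\Vort_0)\|_{H^r}^q \leq \eta \|\Vort_0\|_{H^r}^q + C$ with $\eta < 1$, after which invariance $\int \|\Vort\|_{H^r}^q d\mu = \int \E\|\Vort(T,\Vort_0)\|_{H^r}^q d\mu \leq \eta \int \|\Vort\|_{H^r}^q d\mu + C$ closes to give $\int \|\Vort\|_{H^r}^q d\mu \leq C/(1-\eta)$. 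A cleaner alternative, avoiding the contraction, is to first establish finiteness of the $L^p$ moments of $\mu$ from the dissipative $L^p$ estimate \eqref{eq:pk:kth:moment} (whose right side, after integrating in $\mu$ and using invariance on a suitable truncation, closes because of the coercive $\int_0^T \E\|\Vort\|_{L^p}^p$ term), and then invoke \eqref{eq:Smoothing:moments} with $m=0$ to transfer these into the $H^r$-moments of $\mu$, the $\PP(\|\Vort_0\|_{H^r})$ on the right being majorized using the already-controlled $L^p$ quantities as in the proof of Theorem~\ref{thm:smoothing}.

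I expect the \textbf{main obstacle} to be precisely the circularity in this last bootstrap: a priori $\mu$ is only known to live on $H^r$ and could in principle have infinite $H^r$-moments, so one cannot naively integrate \eqref{eq:Smoothing:moments} against $\mu$. The rigorous fix requires a truncation/monotone-convergence argument — replacing $\|\Vort\|_{H^s}^q$ by $\|\Vort\|_{H^s}^q \wedge M$, applying invariance and the a priori estimate to the bounded functional, and letting $M \to \infty$ via monotone convergence — together with the dissipative contraction (or the $L^p \to H^r$ route) to break the self-reference. Once finiteness of the $H^r$-moments of $\mu$ is secured, the passage to all $s \geq r$ via arbitrary $m$ is immediate, and $\mu(C^\infty) = 1$ follows since $\bigcap_{s} H^s = C^\infty$ on $\TT$.
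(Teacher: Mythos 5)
Your first step (push $\mu$ forward by time $T$, apply \eqref{eq:Smoothing:moments} with $m=s-r$, and reduce to finiteness of the $H^r$-moments of $\mu$) is sound, and you correctly identify the circularity as the main obstacle. The gap is in your second step, and neither of your two proposed resolutions closes it as written. The ``cleaner alternative'' ($L^p \to H^r$ transfer) does not work: the polynomial $\PP(\E\|\Vort_0\|_{H^r})$ on the right of \eqref{eq:Smoothing:moments} --- see \eqref{eq:KB:moment:3} --- contains the term $C\,\E\|\Lambda^r\Vort_0\|_{L^2}^q$ itself with a large constant $C$, and this top-order term cannot be ``majorized using the already-controlled $L^p$ quantities''; smoothing up from $H^1$ is also unavailable since the well-posedness and commutator machinery require data in $H^r$ with $r>2$. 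The contraction route is in principle viable (the dissipative term in \eqref{eq:Ito:Hr:3} together with Poincar\'e does yield an $e^{-qT/8}$ prefactor on the initial data), but it is not what Theorem~\ref{thm:smoothing} states, the ``constant'' in $\eta V + C$ is really a polynomial in the $H^1$ and $L^p$ norms of $\Vort_0$ so one must run a full Lyapunov hierarchy $L^p \to H^1 \to H^r$ with an iteration and dominated convergence at each level, and you have not carried any of this out. Note also that the naive truncation $(\eta x + C)\wedge M \le \eta(x\wedge M)+C$ fails for $x\ge M > C/(1-\eta)$, so even the truncated contraction needs the $n$-fold iterate $P_{nT}V \le \eta^n V + C/(1-\eta)$ before passing to the limit.

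The paper avoids all of this with a Krylov--Bogoliubov time-averaging device that is absent from your proposal: one tests the truncated functional $\phi_{R,N}(\Vort)=\|P_N\Vort\|_{H^s}^q\wedge R$ against $\frac{1}{T-1}\int_1^T P_t(\cdot)\,dt$, splits the $\Vort_0$-integral into $B_{H^r}(\rho)$ and its complement, and uses that in \eqref{eq:Smoothing:moments} the initial-data term $C\,\PP(\rho)$ is \emph{independent of $T$} while only the $\sigma$-term grows linearly in $T$. Dividing by $T-1$ and letting $T\to\infty$ annihilates the initial-data contribution entirely, the complement contributes at most $R\,\mu(B_{H^r}(\rho)^c)\to 0$ as $\rho\to\infty$, and monotone convergence in $R,N$ finishes. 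This requires no contraction and no prior knowledge of any moments of $\mu$ beyond $\mu(H^r)=1$; you should either adopt this averaging argument or fully develop the Lyapunov hierarchy, since as it stands your step two does not yield \eqref{eq:Hs:Reg:Int}.
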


It follows from the proof of this proposition that if we let $\Vort_{0,S}$ such that $\Vort_S(t,\Vort_{0,S})$ be a stationary $H^r$ solution of \eqref{eq:frac:SNSE}
then
\begin{align}
\E \|\Vort_S\|_{H^{s}}^q \leq C \PP (\|\sigma\|_{\HH^s}) 
 \label{eq:KB:moment:3:stat}.
\end{align}
for any $q$ and any $s > r$ holds.

\begin{proof}[Proof of Proposition~\ref{prop:higher}]
For any $R > 0$ and any integer $N$ we define
\begin{align*}
  \phi_{R,N}(\Vort) = \| P_N \Vort\|_{H^s}^q \wedge R
\end{align*}
where $P_N$ is the project operator onto $H_N$.  Clearly
$\phi_{R,N} \in C_b(H^r)$ so that by invariance
\begin{align*}
  \int_{H^r} \phi_{R,N}(\Vort) d\mu(\Vort) =  \frac{1}{T-1}\int_1^{T}\int_{H^r} P_t \phi_{R,N}(\Vort) d\mu(\Vort)dt.
\end{align*}
for any $T > 1$. 

Applying Theorem~\ref{thm:smoothing} with $m$ the $s$ given here and 
with $T_m = 1$ we infer, for any $\rho > 0$
\begin{align}
\frac{1}{T-1}\int_1^{T}\int_{B_{H^r}(\rho)} P_t \phi_{R,N}(\Vort_0) d \mu(\Vort_0)dt 
\leq& \frac{1}{T-1}\int_1^{T}\int_{B_{H^r}(\rho)} \E \|\Vort(t, \Vort_0)\|_{H^s}^q d \mu(\Vort_0)dt  
\notag\\
\leq& \frac{1}{T-1}\int_{B_{H^r}(\rho)} \int_0^{T} \E \|\Vort(t, \Vort_0)\|_{H^{r + \alpha(t)}}^q dt d \mu(\Vort_0) 
\notag\\
\leq& \frac{C \PP(\rho) + C T \PP(\|\sigma\|_{\HH^{s}})}{T-1}. 
\label{eq:smoothing:App:good:set}
\end{align}
Here $\alpha$ is defined as in \eqref{eq:alpha:def}.  
On the other hand we have that 
\begin{align}
\frac{1}{T-1}\int_1^{T}\int_{B_{H^r}(\rho)^c} P_t \phi_{R,N}(\Vort) d\mu(\Vort)dt \leq R \mu(B_{H^r}(\rho)^c). 
\label{eq:brute:est:bad:set}
\end{align}
Combining \eqref{eq:smoothing:App:good:set} and \eqref{eq:brute:est:bad:set} we infer that for any
$R, \rho > 0$
\begin{align*}
  \int_{H^r} \phi_{R,N}(\Vort) d\mu(\Vort)  \leq \frac{C \PP(\rho) + C T \PP(\|\sigma\|_{\HH^{s}})}{T-1} + R \mu(B_{H^r}(\rho)^c)
\end{align*}
Since $T > 1$ was arbitrary to begin with 
\begin{align*}
  \int_{H^r} \phi_{R,N}(\Vort) d\mu(\Vort)  \leq  C \PP(\|\sigma\|_{\HH^{s}})+ R \mu(B_{H^r}(\rho)^c),
\end{align*}
so that finally, taking $\rho \to \infty$ we conclude that
\begin{align*}
   \int_{H^r} \phi_{R,N}(\Vort) d\mu(\Vort)  \leq  C \PP(\|\sigma\|_{\HH^{s}}).
\end{align*}
The desired result,  \eqref{eq:Hs:Reg:Int}, now follows from the monotone convergence theorem.
\end{proof}

\section{Gradient Estimates On the Markovian Semigroup}
\label{sec:ASF}
\setcounter{equation}{0}

In this section we carry out certain estimates on the gradient of the Markov semigroup. These estimates are used to establish certain time-asymptotic smoothing properties of these Markov operators, introduced in \cite{HairerMattingly06}. To set up the gradient estimates, we recall the following topological notions from
\cite{HairerMattingly06}.  

Let us define pseudo-metrics
on $H^r$ according to
\begin{align}
  d_\eps(\Vort^1, \Vort^2) := 1 \wedge  \eps^{-1}   \| \Vort^1 - \Vort^2 \|_{H^r},
  \label{eq:distDef}
\end{align}
and take
\begin{align*}
  \mbox{Lip}_\eps := \{ \phi \in C_b(H^r):  \| \phi \|_{d_\eps}  <  \infty\},
\end{align*}
where
\begin{align*}
  \| \phi \|_{d_\eps} 
  := \sup_{\Vort^1 \not = \Vort^2  \in H^r} \frac{|\phi(\Vort^1) - \phi(\Vort^2)|}{d_\eps(\Vort_1, \Vort_2)}.
\end{align*}
Then, for a finite signed Borelian measure $\mu$, we define:
\begin{align*}
  \|\mu\|_{d_\eps} := \sup_{ \|\phi\|_{d_\eps} = 1, \phi \in {\rm Lip}_\eps} \int_{H^r} \phi(\Vort) \mu(d \Vort).
\end{align*}
This is known in some of the literature as the Kantorovich distance associated to $d_{\eps}$. Recall cf.~\cite[Definition 3.8, Remark 3.9]{HairerMattingly06}
\begin{definition}[\bf Asymptotic Strong Feller]\label{def:ASF}
We say that $\{P_t\}_{t \geq 0}$ is asymptotically strong Feller (ASF) at $\tilde{\Vort}_0 \in H^r$ if
\begin{align}
	\lim_{\eta \to 0} \limsup_{n \to \infty}  \sup_{\Vort_0 \in B_\eta (\tilde{\Vort}_0)}  
	\|   P_{t_n}(\Vort_0, \cdot) - P_{t_n}(\tilde{\Vort}_0, \cdot)  \|_{d_{\eps_n}} = 0,
	\label{eq:ASF:top:def}
\end{align}
for some increasing sequence $t_n$ and some $\eps_n \to 0$.
\end{definition}

The goal of this section is to prove that the Markov semigroup $P_t$ associated to \eqref{eq:frac:SNSE} is ASF. Instead of working directly with Definition~\ref{def:ASF} above, it was shown in \cite[Proposition 3.12]{HairerMattingly06} that a sufficient condition for establishing the ASF property of $P_t$, are suitable gradient estimates for $P_t$. These gradient estimates are established in the next proposition, which is the main result of this section.

\begin{proposition}[\bf Asymptotic Strong Feller for sufficiently many forced modes] 
\label{thm:ASF:easy}
Let $r>2$, $\gamma>0$. Then there exists $N = N(\|\sigma\|_{\LL^{6/\gamma}}, \gamma, r)$ such that if 
the ball of radius $N$ in ${\mathbb Z}_0^2$ is fully contained in $\ZZF$, then
\begin{align}
\| \nabla P_t \phi (\Vort_0)\|_{{\mathcal L}(H^r)} \leq C  \frac{\poly(\|\Vort_0\|_{H^r}, \|\sigma\|_{\HH^{r+2}})}{\min_{k \in \ZZF} |q_k|^2 } \expo(\kappa_\gamma^{1/2}, \|\Vort_0\|_{L^{6/\gamma}}) \left( \| \phi\|_{\infty} + \delta(t) \| \nabla \phi \|_{\infty} \right),\label{eq:ASF:easy}
\end{align}
for all $t\geq \gamma^{-1}$,  any $\phi \in C_b^1(H^r)$. Here 
\begin{align*}
\delta(t) = \exp\left( - \frac{t \lambda_N^{\gamma/2}}{16} \right) \to 0 \mbox{ as }t \to \infty,
\end{align*}
and $\kappa_\gamma = ( C \PP(\|\sigma\|_{L^{6/\gamma}}))^{-1}$ is chosen to obey \eqref{eq:kappa:gamma:def} below.
\end{proposition}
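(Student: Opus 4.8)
The plan is to prove the gradient bound \eqref{eq:ASF:easy} directly, via the Malliavin-calculus ``control'' argument of Hairer--Mattingly; the asymptotically strong Feller property then follows from \cite[Proposition 3.12]{HairerMattingly06}. Fix a direction $\xi \in H^r$ with $\|\xi\|_{H^r} = 1$ and write the directional derivative as $\langle \nabla P_t \phi(\Vort_0), \xi\rangle = \E\langle \nabla \phi(\Vort(t,\Vort_0)), J_{0,t}\xi\rangle$, where $J_{0,t}\xi$ is the linearization of the solution map, solving $\partial_t J + \Lambda^\gamma J + B(J,\Vort) + B(\Vort, J) = 0$ with $J(0) = \xi$. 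For an adapted control $v = (v^k)_{k \in \ZZF}$ valued in the forced modes, let $\mathcal{D}^v \Vort(t)$ denote the associated Malliavin variation, solving the same linearized equation with forcing $\sigma v = \sum_{k \in \ZZF} \sigma_k v^k$ and zero data, and set $\rho_t := J_{0,t}\xi - \mathcal{D}^v \Vort(t)$, which solves $\partial_t \rho + \Lambda^\gamma \rho + B(\rho, \Vort) + B(\Vort, \rho) = - \sigma v$, $\rho(0) = \xi$. The Malliavin integration-by-parts formula then gives the decomposition
\[
\langle \nabla P_t \phi(\Vort_0), \xi\rangle = \E\left[ \phi(\Vort(t)) \int_0^t v\, dW\right] + \E\langle \nabla \phi(\Vort(t)), \rho_t\rangle ,
\]
so the first term is controlled by $\|\phi\|_\infty\, \E (\int_0^t |v_s|^2\, ds)^{1/2}$ and the second by $\|\nabla\phi\|_\infty\, \E\|\rho_t\|_{H^r}$. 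These yield, respectively, the $\|\phi\|_\infty$ and the $\delta(t)\|\nabla\phi\|_\infty$ contributions, so the whole task reduces to (a) driving $\rho_t \to 0$ exponentially in $H^r$, and (b) controlling the cost $\E (\int_0^t |v|^2\, ds)^{1/2}$.

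Following the Foias--Prodi philosophy, I would choose the control to act only on the forced ball. Let $P_N$ denote the projection onto the Fourier modes with $|k| \leq N$, which lie in $\ZZF$, and $Q_N = I - P_N$. I pick $v$, supported on these modes, so that the low-frequency residual $P_N \rho$ is exponentially damped, while the high modes $Q_N \rho$ receive no forcing (as $\sigma$ is supported in $|k| \le N$) and decay under the spectral gap $\lambda_N^\gamma$ of $\Lambda^\gamma$. Inverting $\sigma$ on the forced modes, where $\sigma_k = q_k e_k$, produces the factor $(\min_{k \in \ZZF}|q_k|^2)^{-1}$.

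The technical heart is the decay of $\rho_t$, and here I would follow the two-stage strategy announced in the introduction. Since exponential moments are unavailable for $\|\Vort\|_{H^s}$ with $s \geq 1$, I first run the energy estimate for $\rho$ in the weak norm $H^{-1}$: pairing the residual equation with $\Lambda^{-2}\rho$ and estimating $B(\rho,\Vort)$ and $B(\Vort,\rho)$ by Sobolev embedding in terms of a single low Lebesgue norm of $\Vort$ --- precisely $\|\Vort\|_{L^{6/\gamma}}$, which is why this norm enters the statement --- leads to a differential inequality of the form
\[
\frac{d}{dt}\|\rho\|_{H^{-1}}^2 \leq \left(- c\,\lambda_N^\gamma + C\,\|\Vort\|_{L^{6/\gamma}}^2\right)\|\rho\|_{H^{-1}}^2
\]
on the high modes, with $P_N\rho$ damped by the control. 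Choosing $N = N(\gamma, r, \|\sigma\|_{\LL^{6/\gamma}})$ large enough that $\lambda_N^\gamma$ beats $C\|\Vort\|_{L^{6/\gamma}}^2$ in an exponentially-averaged sense, Gr\"onwall together with the exponential moment bound \eqref{eq:expo:mom:II} of Theorem~\ref{thm:exponential} yields $\E \|\rho_t\|_{H^{-1}}^2 \lesssim e^{-c \lambda_N^\gamma t}\,\expo(\kappa_\gamma^{1/2}, \|\Vort_0\|_{L^{6/\gamma}})$. I would then bootstrap this weak decay to $H^r$ using the parabolic smoothing of the linearized flow, in the spirit of Theorem~\ref{thm:smoothing} and Proposition~\ref{prop:cont:dep}, interpolating the $H^{-1}$ decay against polynomial-in-time $H^{r+1}$ bounds for $\rho$; this interpolation accounts both for the halved exponent $\lambda_N^{\gamma/2}$ in $\delta(t)$ and for the requirement $t \ge \gamma^{-1}$ of a short transient smoothing time.

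Finally, the cost term is handled by bounding $|v_s|^2 \lesssim (\min_{k \in \ZZF}|q_k|^2)^{-1}\,\poly(\|\Vort\|_{H^r})\,\|\rho_s\|^2$ from the definition of $v$, and then applying Cauchy--Schwarz in $(s,\omega)$ together with the $H^{-1}$ decay of $\rho$ and the polynomial and exponential moments of $\Vort$ from Theorems~\ref{thm:smoothing} and~\ref{thm:exponential}; the exponential decay in $s$ secures the time-integrability and produces the $\poly(\|\Vort_0\|_{H^r}, \|\sigma\|_{\HH^{r+2}})\,\expo(\kappa_\gamma^{1/2}, \|\Vort_0\|_{L^{6/\gamma}})$ prefactor. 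The main obstacle is precisely the norm mismatch built into the problem: the clean Foias--Prodi damping of the residual is only available in the weak $H^{-1}$ norm, where the $L^p$ exponential moments suffice to dominate the nonlinear coupling, whereas the gradient estimate must be expressed in the phase space $H^r$. Reconciling the two --- upgrading the $H^{-1}$ exponential decay to $H^r$ through smoothing and interpolation without sacrificing either the exponential rate or the exponential-moment integrability of the prefactors --- is the delicate point on which the whole argument turns.
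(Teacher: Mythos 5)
Your proposal follows essentially the same route as the paper: the Malliavin integration-by-parts decomposition into a cost term and a residual term, a control supported on the low modes (the paper takes $\Kon = -\lambda_N^{\gamma/2}\sigma_* P_N\rho$), exponential decay of the residual first in $H^{-1}$ (equivalently, an $L^2$ estimate for the velocity $K\ast\rho$) using the $L^{6/\gamma}$ exponential moments of Theorem~\ref{thm:exponential}, a bootstrap to $H^r$ via the time-dependent-index parabolic smoothing and interpolation against the polynomial $H^{s+1}$ moments of Theorem~\ref{thm:smoothing}, and the It\=o isometry plus the $H^{-1}$ decay for the cost of the adapted control. All the key ingredients and the order in which they are deployed match the paper's proof in Sections~\ref{sec:control:H:-1}--\ref{sec:control:H:r}.
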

Recall that in \eqref{eq:ASF:easy} we have
\begin{align*}
\| \Psi (\Vort_0) \|_{{\mathcal L}(H^r)} = \sup_{\| \xi \|_{H^r}=1} | \nabla \Psi(\Vort_0) \cdot \xi |
\end{align*}
for any $\Psi \in C_b^1(H^r)$, and where $\nabla \Psi (\Vort_0) \cdot \xi$ represents its Frechet derivative at $\Vort_0$ in the direction $\xi$. 

Before turning to the proof of Proposition~\ref{thm:ASF:easy}, we show its connection to establishing the uniqueness of the invariant measure.

\begin{proof}[Proof of Theorem~\ref{thm:uniqueness}]
By Proposition~3.12 in \cite{HairerMattingly06}, the gradient estimate obtained in Proposition~\ref{thm:ASF:easy} implies the asymptotic strong Feller property (cf.~Definition~\ref{def:ASF} above). On the other hand, the weak irreducibly property established in Proposition~\ref{prop:WI} below, shows that $\Vort = 0$ lies in the support of every invariant measure.  Recalling that the collection of invariant measures for $P_t$ is closed, convex, and compact, all of whose extremal elements are ergodic, the uniqueness now follows from Theorem~3.16 in~\cite{HairerMattingly06}.
\end{proof}

The remainder of this section is devoted to the proof of Proposition~\ref{thm:ASF:easy}. We begin by relating \eqref{eq:ASF:easy} to a certain control problem and recall some needed aspects of Malliavin calculus.

\subsection{Some aspects of Malliavin Calculus and the Derivation of the Control Problem}

Observe that for any $\Vort_{0}, \xi \in H^{r}$, $t > 0$ and any $\phi \in C_b^1(H^r)$
we have
\begin{align}
 \nabla P_{t}\phi(\Vort_0)\cdot \xi = \E \nabla \phi(\Vort(t, \Vort_{0})) \cdot \LinI_{0,t} \xi
 \label{eq:MSGInitD}
\end{align}
where $\LinI_{s,t} \xi$ solves
\begin{align}
& \partial_t \rho + \Lambda^\gamma \rho + \nabla B (\Vort(t,\Vort_0) ) \rho = 0,
\notag\\
& \rho(s) = \xi, \label{eq:ASF:pertIc}
\end{align}
and we denote
\begin{align*}
\nabla B (\Vort) \rho = (K\ast \rho) \cdot \nabla \Vort + (K \ast \Vort) \cdot \nabla \rho =  B(\rho, \Vort) + B(\Vort,  \rho)
\end{align*}
with $K = \nabla^\perp (-\Delta)^{-1}$ being the Biot-Savart kernel.

Let us now very briefly recall some elements of Malliavin calculus 
in our setting. See \cite{Nualart2009} (and also, \cite{Nualart2006,Malliavin1997}) for further details.  
One of the central objects of the theory is
the Malliavin derivative
$\DM: L^2(\Omega) \rightarrow L^2(\Omega \times L^2(0,T; L_{2}))$
which acts according to 
\begin{align*}
   \DM F = \sum_{k =1}^{N} \pd{x} f \left(\int_{0}^{T}g_{1}(s) dW, \ldots, \int_{0}^{T}g_{n}(s)dW \right) g_{k}
   \label{eq:SimpleFunctions}
\end{align*}
for ``simple functions'' $\mathcal{S}$ of the form
\begin{align*}
   F = f\left(\int_{0}^{T}g_{1}(s) dW, \ldots, \int_{0}^{T}g_{n}(s)dW \right)
\end{align*}
where $f: \RR^{N} \rightarrow \RR$ is any Schwartz class function, and $g_{1} \ldots g_{n}$ are deterministic
elements in $L^{2}(0,T;L_{2})$.   
 
Similarly, we may extend $\DM$ to operate on vector valued
random variables.  In particular we have
$\DM: L^2(\Omega; H^{r}) \rightarrow L^2(\Omega \times L^2([0,T]; L_{2}); H^{r}) =
L^2(\Omega ; L^2([0,T]; L_{2}( H^{r}))$ acting on simple functions
$\mathcal{S}(H^{r})$ of the form
\begin{align*}
   F = \sum_{k=1}^{M} F_{k} \Vort_{k}; \quad F_{k} \in \mathcal{S}, \Vort_{k} \in H^{r}.
\end{align*}
We may close this operator $\DM$ in the space of such simple function $\mathcal{S}(H^{r})$ under the norm
\begin{align*}
   \| F \|_{1,2}  = \E \| F\|_{H^{r}}^{2} +  \E \| \DM F \|^{2}_{H^{r} \times L^2(0,T; L_{2}) }
                        =\E \| F\|_{H^{r}}^{2} +  \E \| \DM F \|^{2}_{L^2(0,T; L_{2}(H^{r})) }
\end{align*}
and define the space Malliavin-Sobolev space $\DDM^{1,2}(H^{r})$.    

Two central ingredients in the Malliavin calculus are the chain rule
\begin{align}
   \DM \phi(F) = \nabla \phi(F) \cdot \DM F \quad \mbox{ for any } F \in \DDM^{1,2}(H^{r}), \phi \in C^{1}_{b}(H^{r}).
\end{align}
and the Malliavin integration by parts formula
\begin{align*}
   \E \langle \DM F , \Kon \rangle_{L^{2}(0,T;{L_{2}})} =  \E \left( F \int_{0}^{T}\Kon dW \right)
\end{align*}
which holds for any $\Kon \in  L^{2}(\Omega; L^{2}_{loc}([0, T); L_{2})$ and any $F \in \DDM^{1,2}(H^{r})$.  Here
$\int_{0}^{T}\Kon dW$ is the Skorohod integral which in fact is \emph{defined}
by this duality relation.  It coincides with the more classical It\=o
integral when $\Kon$ is adapted to $\{\mathcal{F}_{t}\}_{t \geq 0}$.  See \cite{Nualart2009} 
for further details.

We now define the operator
\begin{align*}
  \LinN_{0,t} \Kon = \lim_{\eps \to 0} \frac{\Vort(t, \Vort_0, \sigma( W + \eps \Kon)) -\Vort(t, \Vort_0, \sigma W) }{\eps}.
\end{align*}
for any $\Kon \in  L^{2}(\Omega; L^{2}_{loc}([0, \infty); L_{2}))$. 
On the other hand (cf. \cite{Nualart2006}) we have
\begin{align*}
  \langle \DM \Vort, \Kon \rangle_{L^{2}(0,t;{L_{2}})} = \LinN_{0,t} \Kon.
\end{align*}
Thus, according to the Malliavin chain rule and integration by parts formula
\begin{align}
  \E ( \nabla\phi( \Vort(t, \Vort_0)) \cdot \LinN_{0,t} \Kon)
  =& \E( \langle \DM (\phi(\Vort(t,\Vort_0)), \Kon \rangle_{L^2(0,t;L_2)})
  = \E\left( \phi(\Vort(t,\Vort_0)) \int_0^t \Kon dW \right).
       \label{eq:NoisePerturbMSG}
\end{align}

With these preliminaries in hand we now return to  \eqref{eq:MSGInitD} and compute
\begin{align*}
\nabla P_t \phi(\Vort_0) \cdot \xi &= 
 \E  \left( \nabla \phi(\Vort(t, \Vort_{0})) \cdot \LinN_{0,t}\Kon \right) + \E \left( \nabla \phi(\Vort(t, \Vort_{0})) \cdot (\LinI_{0,t} \xi - \LinN_{0,t}\Kon)\right)
 \notag\\
&= \E \left( \phi( \Vort(t,\Vort_0)) \int_0^t \Kon  dW \right)  + \E \left( \nabla \phi(\Vort(t,\Vort_0)) \rho(t,\xi,\Kon,\Vort(t,\Vort_0)) \right)
\end{align*}
which holds for any $\Kon \in L^2(\Omega;L^2_{\rm loc}([0,\infty);L_2))$ and  $\rho$ is the solution of the ``control problem''
\begin{align}
& \partial_t \rho + \Lambda^\gamma \rho + \nabla B (\Vort(t,\Vort_0) ) \rho = - \sigma \Kon \label{eq:ASF:control}\\
& \rho(0) = \xi. \label{eq:ASF:control:IC}
\end{align}

In order to prove Proposition~\ref{thm:ASF:easy} we need a procedure to assign to every $\xi \in H^r$, with $\| \xi\|_{H^r} = 1$ an element  $\Kon^\xi \in L^{2}(\Omega; L^{2}_{loc}([0, \infty); L_{2}))$ such that 
\begin{align}
	\lim_{t\to \infty} \left( \sup_{\| \xi\|_{H^r} = 1} 
	\E \| \rho(t,\xi,\Kon^\xi,\Vort(t,\Vort_0)) \|_{H^r} \right) 
	=: \lim_{t\to \infty} \delta(t) = 0
	\label{eq:ASF:control:2}
\end{align}
and
\begin{align}
	\sup_{t \geq 0, \|\xi\|_{H^r} = 1} \E  \left| \int_0^t \Kon^\xi(s) dW  \right|
		\leq C < \infty. 
		\label{eq:ASF:control:1}
\end{align}
The propose of Sections~\ref{sec:control:H:-1} and \ref{sec:control:H:r} is to establish \eqref{eq:ASF:control:2}, \eqref{eq:ASF:control:1} for a suitable control $\Kon^\xi$ which we will define next.

For this purpose we introduce the classical projection operators: $P_N$ is the projection from $H^r$ to 
$H_N = {\rm span} \{ e_k \colon k\in{\mathbb Z}_0^2, |k|\leq N\}$; and 
$Q_N = 1 - P_N$ is the projection on the orthogonal complement of $H_N$.
Define $\sigma_{*}: H^r \to L_2$ according to
\begin{align*}
   (\sigma^*\Vort)_k  = 
   \begin{cases}
   \frac{1}{q_k} \langle \Vort, e_k \rangle & k \in \ZZF\\
   0 & \textrm{otherwise}.
   \end{cases}
\end{align*}
As such that $\sigma \sigma_{*}= 1$ on $H_N$  and let 
\begin{align}
\Kon = - \lambda_{N}^{\gamma/2} \sigma_* P_N \rho\label{eq:ASF:control:v:choice}
\end{align}
with $N$ to be determined below.
The above choice of $\Kon$ implies that \eqref{eq:ASF:control} is equivalent to 
\begin{align}
\partial_t \rho + \Lambda^\gamma \rho +   \nabla B(\Vort(t,\Vort_0)) \rho  = - \lambda_{N}^{\gamma/2}  P_N \rho \label{eq:ASF:control:3}
\end{align}

The first observation is that even when attempting to show that $\E \|\rho(t)\|_{L^2} \to 0$ as $t\to \infty$, for $\gamma \in (0,1)$ we obtain an equation of the type
\begin{align*}
\frac{d}{dt} \| \rho\|_{L^2} \leq C \|\rho \|_{L^2} \left(\| \Vort \|_{H^1}^{1+\eps} - \lambda_N^{\gamma/2}  \right) 
\end{align*}
for some small $0 < \eps < 1$. Upon applying the Gr\"onwall inequality and taking expected values, it seems that the desired (exponential) decay on $\rho$ in $L^2$ may be obtained {\em if} we had that
\begin{align}
\E \exp\left( \int_0^T \|\Vort(t) \|_{H^1}^{1+\eps} dt \right) \label{eq:H1:exp:mom}
\end{align}
grows at most exponentially in $T$. The main difficulty is that a bound on the exponential moment of the $H^1$ norm in \eqref{eq:H1:exp:mom} is {\em not available}. Indeed, we are only able to prove exponential moments for the $L^p$ norm of $\Vort$, cf.~Section~\ref{sec:Vort:exp} above. 

To overcome this difficulty, the key step is to first show that the the expected value of the $H^{-1}$ norm of $\rho$ decays, and then bootstrap this information to $H^r$ norms.

\subsection{Estimates on the Control Equation in \texorpdfstring{$H^{-1}$}{H -1}}
\label{sec:control:H:-1}

Let $v = K \ast \rho$ and $u = K \ast \Vort$. Convolving \eqref{eq:ASF:control:3} with $K$, we thus obtain
\begin{align}
\partial_t v + \Lambda^\gamma v + \lambda_{N}^{\gamma/2} P_N v + u \cdot \nabla v + \nabla \pi + v \cdot \nabla u = 0, \qquad \nabla \cdot v = 0,
\label{eq:ASF:control:4}
\end{align}
for some suitable mean-zero pressure $\pi$. Multiplying \eqref{eq:ASF:control:4} by $v$ and integrating over the torus yields
\begin{align}
\frac 12 \frac{d}{dt} \|v\|_{L^2}^2 + \| \Lambda^{\gamma/2} v \|_{L^2}^2 + \lambda_{N}^{\gamma/2} \| P_N v \|_{L^2}^2 = - \int (v \cdot \nabla u) \cdot v \leq \| v\|_{L^{2+\eps}}^2 \|\nabla u\|_{L^{\frac{2+\eps}{\eps}}}
\label{eq:ASF:control:5}
\end{align}
where we let $\eps = \eps(\gamma) = 2\gamma/(6-\gamma)$. This choice of $\eps$, in view of \eqref{eq:sobolev:abstract} gives that
\begin{align}
\| v\|_{L^{2+\eps}}^2 \|\nabla u\|_{L^{\frac{2+\eps}{\eps}}} 
= \| v\|_{L^{\frac{12}{6-\gamma}}}^2 \|\nabla u\|_{\frac{6}{\gamma}}
&\leq C \|\Lambda^{\gamma/6} v \|_{L^2}^2 \|\Vort\|_{L^{\frac{6}{\gamma}}} \notag\\
&\leq C  \|\Lambda^{\gamma/2} v \|_{L^2}^{2/3}  \| v \|_{L^2}^{4/3} \|\Vort\|_{L^{\frac{6}{\gamma}}} \notag\\
& \leq \frac 12 \| \Lambda^{\gamma/2}v\|_{L^2}^2 + C \|v\|_{L^2}^2 \|\Vort\|_{L^\frac{6}{\gamma}}^{3/2} \notag\\
&\leq \frac 12 \| \Lambda^{\gamma/2}v\|_{L^2}^2 + \frac{\kappa_\gamma}{2} \|v\|_{L^2}^2 \|\Vort\|_{L^\frac{6}{\gamma}}^{2} + C \kappa_\gamma^{-3} \|v\|_{L^2}^2
\label{eq:ASF:control:6}
\end{align}
where $0 < \kappa_\gamma  \ll 1$ is to be chosen later. Inserting \eqref{eq:ASF:control:6} into \eqref{eq:ASF:control:5}, along with the standard lower bound for $\|\Lambda^{\gamma/2} Q_N v\|_{L^2}^2$, yields
\begin{align}
\frac{d}{dt} \|v\|_{L^2}^2 +    \lambda_N^{\gamma/2}   \|v \|_{L^2}^2 
&\leq \frac{d}{dt} \|v\|_{L^2}^2 +    \lambda_N^{\gamma/2}   \|P_N v \|_{L^2}^2 + \|\Lambda^{\gamma/2} Q_N v\|_{L^2}^2 \notag\\
&\leq \kappa_\gamma  \| v\|_{L^2}^2 \|\Vort\|_{L^{6/\gamma}}^2 + C \kappa_\gamma^{-3} \|v\|_{L^2}^2
\label{eq:ASF:control:7}
\end{align}
for some universal positive constant $C$.
Assuming $N$ is chosen sufficiently large so that 
\begin{align}
\frac{\lambda_N^{\gamma/2}}{2} \geq C \kappa_\gamma^{-3}
\label{eq:N:cond:1}
\end{align}
where $C$ is the constant in \eqref{eq:ASF:control:7}, we thus obtain
\begin{align}
\frac{d}{dt} \|v\|_{L^2}^2 \leq \left(  \kappa_\gamma  \|\Vort\|_{L^{6/\gamma}}^2 - \frac{\lambda_N^{\gamma/2}}{2} \right) \| v\|_{L^2}^2  \label{eq:ASF:control:8}
\end{align}
where $\kappa_\gamma$ is yet to be chosen. Upon applying the Gr\"onwall inequality and taking expected values, we thus obtain
\begin{align*}
\E \|\rho(T)\|_{H^{-1}}^2 = \E \|v(t)\|_{L^2}^2  \leq \|\xi\|_{H^{-1}}^2 \E \exp \left(\kappa_\gamma \int_0^T \|\Vort(t)\|_{L^{6/\gamma}}^2 dt -  \frac{\lambda_N^{\gamma/2}T }{2} \right) 
\end{align*}
for any $T>0$. Now, we choose 
\begin{align}
\kappa_\gamma = \frac{1}{C(1 + \|\sigma\|_{\LL^{6/\gamma}}^4)} 
\label{eq:kappa:gamma:def}
\end{align}
to be sufficiently small so that \eqref{eq:expo:mom:II} holds with $ \kappa = \kappa_\gamma^{1/2}$ (see also \eqref{eq:eps:kappa} with $\eps=\kappa=\kappa_\gamma^{1/2}$). This allows us to apply the estimate \eqref{eq:expo:mom:II} with $p = 6/\gamma$, and obtain that
\begin{align*}
\E \|\rho(T)\|_{H^{-1}}^2 \leq \|\xi\|_{H^{-1}}^2  \exp\left(T -  \frac{\lambda_N^{\gamma/2}T }{2} \right) C  \E \exp\left( \kappa_\gamma^{1/2} \|\Vort_0\|_{L^{6/\gamma}}^2 \right).
\end{align*}
Therefore, if we ensure that $N$ is sufficiently large so that
\begin{align}
\frac{\lambda_N^{\gamma/2}}{4} \geq 1 \vee C\poly(\|\sigma\|_{L^{6/\gamma}})\label{eq:N:cond:2}
\end{align}
we have established the exponential decay of the $H^{-1}$ norm of the control
\begin{align}
\E \|\rho(T)\|_{H^{-1}}^2 \leq \|\xi\|_{H^{-1}}^2  \exp\left(-  \frac{\lambda_N^{\gamma/2}T }{4} \right) C  \E \exp\left(\kappa_\gamma^{1/2} \|\Vort_0\|_{L^{6/\gamma}}^2   \right)
\label{eq:ASF:control:9}
\end{align}
where $\kappa_\gamma$ is a sufficiently small constant, that depends on $\gamma$ and  on  $\| \sigma\|_{\LL^{6/\gamma}}$.

\subsection{Estimates on the Control in \texorpdfstring{$H^{r}$}{Hr}}
\label{sec:control:H:r}

Next, we bootstrap the decay obtained in \eqref{eq:ASF:control:9} for the $H^{-1}$ norm, to a decay for the $H^r$ norm of the control.
As in Section~\ref{sec:Vort:Hr} we need to appeal to the smoothing effect encoded in the equations.
This time we consider
\begin{align} 
s(t)  = 
\begin{cases} r - 1 + t \gamma  , & t \in [0, T_\gamma],\\
r , & t > T_\gamma,
\end{cases}
\label{eq:alpha:control}
\end{align}
where we let $T_\gamma = \gamma^{-1}$. Note that for $x>0$ and $\gamma>0$ we have $4 \gamma \log x \leq x^{2\gamma}$ which is the reason why we let $\dot{s}(t) = \gamma $ on $[0,T_\gamma]$. More precisely, this choice of slope in $s(t)$ yields
\begin{align*} 
2 \dot{s}(t) \| (\log \Lambda)^{1/2} \Lambda^{s(t)} \rho \|_{L^2}^2 \leq \frac 12 \|\Lambda^{s(t) + \gamma/2} \rho \|_{L^2}^2.
\end{align*}
In view of the above discussion, the $H^{s(t)}$ energy estimate for the control equation yields
\begin{align} 
\frac 12 \frac{d}{dt} \|\Lambda^s \rho\|_{L^2}^2 &+ \frac 14 \|\Lambda^{s + \gamma/2} \rho\|_{L^2}^2 + \frac{\lambda_N^{\gamma/2}}{4} \|\Lambda^s \rho \|_{L^2}^2 \notag\\
&\leq \left| \int \Lambda^s B(\rho,\Vort) \Lambda^s \rho dx \right| + \left| \int \Lambda^s B(\Vort,\rho) \Lambda^s \rho dx \right| =: T_1 + T_2.
\label{eq:ASF:Hs:1}
\end{align}

Note that $s(t) \geq r- 1 > 1$. Thus, $H^s$ is an algebra, and we have a direct bound for $T_1$ as
\begin{align}
T_1 &\leq C \|\Lambda^{s} \rho\|_{L^2} \|K \ast \Lambda^{s}  \rho \|_{L^2} \| \nabla \Vort\|_{H^s}  \leq   C \|\Vort\|_{H^{s+1}} \|\Lambda^s \rho\|_{L^2}^2.
\label{eq:ASF:T1}
\end{align}
Here we also used the Poincar\'e inequality.
To estimate the $T_2$ term in \eqref{eq:ASF:Hs:1} we note that $\int B(\Vort,\Lambda^s \rho) \Lambda^s\rho dx =0$, and appeal to the commutator
\eqref{eq:comm:abstract}. The Sobolev embedding \eqref{eq:sobolev:abstract}, and the Poincar\'e inequality, letting $0< \eps \ll 1$ we get
\begin{align} 
T_2 
&\leq C \|\Lambda^{s-1} \Vort\|_{L^{\frac{4+2\eps}{\eps}}} \|\nabla \rho\|_{L^{2+\eps}} \|\Lambda^s \rho\|_{L^2} + C \|\Vort\|_{L^{\frac{8}{\gamma}}} \|\Lambda^{s} \rho\|_{L^{\frac{8}{4-\gamma}}} \|\Lambda^s\rho \|_{L^2} \notag\\
&\leq C \|\Lambda^s\Vort\|_{L^2} \|\Lambda^{s} \rho\|_{L^{2}}^2 + C \|\Vort\|_{H^1} \|\Lambda^{s+\gamma/4} \rho\|_{L^2} \|\Lambda^s \rho\|_{L^2} \notag\\
&\leq \frac{1}{8} \|\Lambda^{s+\gamma/2}\rho \|_{L^2}^2 + C \left( \|\Lambda^s\Vort\|_{L^2} + \| \Vort\|_{H^1}^2 \right) \| \Lambda^s \rho\|_{L^2}^{2}.
\label{eq:ASF:T2}
\end{align}
Therefore, inserting the estimates \eqref{eq:ASF:T1}--\eqref{eq:ASF:T2} into \eqref{eq:ASF:Hs:1} and appealing to the Poincar\'e inequality we obtain
\begin{align} 
 \frac{d}{dt} \|\Lambda^s \rho\|_{L^2}^2  + \frac 18 \|\Lambda^{s + \gamma/2} \rho\|_{L^2}^2 +& \frac{\lambda_N^{\gamma/2}}{2} \|\Lambda^s \rho \|_{L^2}^2 \notag\\
 &\leq C \left(1 +  \|\Vort\|_{H^{s+1}}^2 \right) \|\Lambda^{s}\rho\|_{L^2}^{2}  \notag \\
 & \leq C \left(1 +  \|\Vort\|_{H^{s+1}}^2 \right) \|\Lambda^{-1}\rho\|_{L^2}^{\delta} \|\Lambda^s \rho\|_{L^2}^{\delta}  \|\Lambda^{s+\gamma/2} \rho\|_{L^2}^{2-2\delta} \notag\\
 &\leq \frac 18 \|\Lambda^{s + \gamma/2} \rho\|_{L^2}^2  + C (1 + \|\Vort\|_{H^{s+1}}^{2/\delta} ) \|\Lambda^{-1} \rho\|_{L^2} \|\Lambda^s \rho\|_{L^2}
\label{eq:ASF:Hs:2}
\end{align}
where
\begin{align*}
\delta = \frac{\gamma}{s+1+\gamma}.
\end{align*}
Note that by \eqref{eq:alpha:control} we have $\gamma/(r+1+\gamma) \leq  \delta \leq \gamma/(r+\gamma)$.
After canceling the dissipative terms, we divide both sides of \eqref{eq:ASF:Hs:2} by $\|\Lambda^s \rho\|_{L^2} $ and obtain
\begin{align} 
&\frac{d}{dt} \|\Lambda^s \rho\|_{L^2}  + \frac{ \lambda_N^{\gamma/2}}{4} \|\Lambda^s \rho \|_{L^2}    \leq C \left(1 +     \| \Vort\|_{H^{s+1}}^{\frac{2(r+1+\gamma)}{\gamma}} \right) \|\Lambda^{-1}\rho\|_{L^2} 
\label{eq:ASF:Hs:4}.
\end{align}
Using Duhamel's formula, and taking expected values, we thus obtain that 
\begin{align} 
& \E \| \Lambda^{s(t)} \rho(t)\|_{L^2}  \notag\\
&\leq \| \Lambda^{r-1} \xi \|_{L^2}  \exp\left(-\frac{t   \lambda_N^{\gamma/2}}{4} \right)\notag\\ 
&\ + C \E \int_0^t \exp\left(-\frac{(t-\tau) \lambda_N^{\gamma/2}}{4} \right) \|\rho(\tau)\|_{H^{-1}}    \left(1 +    \|\Lambda^{s(\tau)+1}\Vort(\tau)\|_{L^{2}}^{\frac{2(r+1+\gamma)}{\gamma} } \right) d\tau\notag\\
&\leq \| \Lambda^{r-1} \xi \|_{L^2} \exp\left(-\frac{t \lambda_N^{\gamma/2}}{4} \right)  \notag\\
&\ + C \left( \E \int_0^t \exp\left(-  \frac{(t-\tau) \lambda_N^{\gamma/2}}{2} \right) \|\rho(\tau)\|_{H^{-1}}^{2} d\tau\right)^{\frac{1}{2}}   \left( \E \int_0^t 1 + \|\Lambda^{s(\tau)+1}\Vort(\tau)\|_{L^{2}}^{\frac{4(r+1+\gamma)}{\gamma} }  d\tau \right)^{\frac{1}{2}}.
\label{eq:ASF:Hs:5}
\end{align}
To conclude, we use estimate \eqref{eq:ASF:control:9} which gives us exponential decay of $\E \|\rho(\tau) \|_{H^{-1}}^2$,  and estimate \eqref{eq:KB:moment:3}, which gives us a control\footnote{Note that the $s(t)$ in \eqref{eq:KB:moment:3} is not the same as the $s(t)$ in \eqref{eq:alpha:control}. The former is larger by $1$ than the latter.}
 of $\int_0^t \E \|\Lambda^{s(\tau)+1}  \Vort(\tau)\|_{L^2}^q d\tau$, with any $q\geq 2$. Therefore, from \eqref{eq:KB:moment:3}, \eqref{eq:ASF:control:9}, and \eqref{eq:ASF:Hs:5} we obtain
 \begin{align} 
&\E\| \Lambda^{s(t)} \rho(t)\|_{L^2}  \notag\\
&\quad \leq \| \Lambda^{r-1} \xi \|_{L^2} \exp\left(-\frac{t \lambda_N^{\gamma/2}}{4} \right) \notag\\
&\qquad + C \|\xi\|_{H^{-1}}  \exp\left(-\frac{t \lambda_N^{\gamma/2}}{8} \right)  \Bigl(  \expo(\kappa_\gamma, \|\Vort_0\|_{L^{6/\gamma}}) \Bigr)^{1/2}   \Bigl(  \poly(\|\Vort_0\|_{H^r}) + C t  \poly(\|\sigma\|_{\HH^{r+2}}) \Bigr)^{1/2}.
\label{eq:ASF:Hs:6}
\end{align}
The degree of the polynomial $\PP$ above may be computed explicitly\footnote{In \eqref{eq:ASF:Hs:6}, we have $\PP(x) = 1+ x^n$, where $n$ is the smallest integer larger than $\frac{16 (r+1+\gamma) ( (4+\gamma) (r+2+\gamma) - 4)(1+\gamma)}{\gamma^3 (6+\gamma)}$, but this explicit value is not important.} solely in terms of  $r$ and $\gamma$. The coefficient $\kappa_\gamma$ in the exponential function depends on $\gamma$ and $\|\sigma\|_{\LL^{6/\gamma}}$. Here we also used that the initial data $\Vort_0$ is in fact deterministic.

To conclude the argument, we need to wait let $t \geq T_\gamma = \gamma^{-1}$, so that $s(t) = r$,  and obtain
\begin{align} 
\E \|\rho(t)\|_{H^r} 
&\leq C \|\xi\|_{H^{r-1}} \exp\left(-\frac{t \lambda_N^{\gamma/2}}{8} \right)   \Bigl(   \expo(\kappa_\gamma, \|\Vort_0\|_{L^{6/\gamma}}) \Bigr)^{1/2}   \Bigl(    \poly(\|\Vort_0\|_{H^r}) +  t  \poly(\|\sigma\|_{\HH^{r+2}}) \Bigr)^{1/2}  \notag\\
&\leq C \|\xi\|_{H^{r-1}} \exp\left(-\frac{t \lambda_N^{\gamma/2}}{8} \right)  (1+t)^{1/2}    \expo(\kappa_\gamma, \|\Vort_0\|_{L^{6/\gamma}})     \poly(\|\Vort_0\|_{H^r} , \|\sigma\|_{\HH^{r+2}})  \notag\\
&\leq C \|\xi\|_{H^{r-1}} \exp\left(-\frac{t \lambda_N^{\gamma/2}}{16} \right)      \expo(\kappa_\gamma, \|\Vort_0\|_{L^{6/\gamma}})  \poly(\|\Vort_0\|_{H^r} , \|\sigma\|_{\HH^{r+2}})  
\label{eq:ASF:decay:final}
\end{align}
whenever $N$ is sufficiently large so that \eqref{eq:N:cond:1} and \eqref{eq:N:cond:2} holds. 

\subsection{Estimates on the Stochastic Integral Term}

It finally remains to verify \eqref{eq:ASF:control:1} for the choice
$\Kon$ given in \eqref{eq:ASF:control:v:choice}.  Let us first note that $\Kon$ is adapted. As such, with the It\=o isometry we obtain that 
\begin{align}
\E \left|\int_0^t \Kon(s) dW_s \right|^2 
&= \E \int_0^t|\lambda_{N}^{\gamma/2} \sigma_*  P_N \rho(s) |_{L_2}^2 ds\notag\\
&\leq  \lambda_{N}^{\gamma} \|\sigma_*\|_{{\mathcal L}(H^r,L_2)}^2 \E \int_0^\infty \|P_N \rho\|_{H^r}^2 ds \notag\\
& \leq  \lambda_{N}^{2r + 2+\gamma} \|\sigma_* \|_{{\mathcal L}(H^r,L_2)}^2 \E \int_0^\infty \| \rho\|_{H^{-1}}^2 ds
\label{eq:ASF:SIT:1}
\end{align}
To prove \eqref{eq:ASF:control:1}, we combine \eqref{eq:ASF:SIT:1} with \eqref{eq:ASF:control:9} and obtain
\begin{align} 
\sup_{t \geq 0, \|\xi\|_{H^r} = 1} \E  \left| \int_0^t \Kon^\xi(s) dW_s  \right| 
&\leq C \lambda_{N}^{2r + 2+\gamma/2} \|\sigma_* \|_{{\mathcal L}(H^r,L_2)}^2   \E \expo(\kappa_\gamma^{1/2}, \|\Vort_0\|_{L^{6/\gamma}}^2) \notag\\
&\leq C \frac{\PP(\|\sigma\|_{\LL^{6/\gamma}}) }{\min_{k \in \ZZF} |q_k|^2} \E \expo(\kappa_\gamma^{1/2}, \|\Vort_0\|_{L^{6/\gamma}}^2) \label{eq:ASF:SIT:2}
\end{align}
where we have used that $\Lambda_N$ is chosen to proportional to a polynomial in $\|\sigma\|_{\LL^{6/\gamma}}$.
The emphasis here is that the above bound is independent of $t$ and $\xi$.

\begin{remark}
\label{rmk:ASF:Girsanov:Thm}
A different approach may be used to establish the asymptotic strong Feller property, 
which does not require exponential moment estimates nor the use of Mallivan Calculus, but which retains some of the spirit of the above estimates.
Here one couples nearby solutions using the Girsanov theorem and the Foias-Prodi estimates (see, e.g.~\cite{KuksinShirikian12}). Unfortunately, this framework appears to be ill-suited to establish ergodic properties and mixing in the hypoelliptic forcing regime.
\end{remark}

\section{Weak Irreducibility}
\label{sec:WI}
\setcounter{equation}{0}

Let us denote by $\BB(R)$ the ball of radius $R$ about the origin in $H^r$.

\begin{proposition}[\bf Weak Irreducibility]\label{prop:WI}
Let $\mu \in \Pr(H^r)$ be invariant for $\{ P_t \}_{t \geq 0}$. Then, for any $\eps>0$, we have $\mu (\BB(\eps)) >0$, 
i.e. $0 \in {\rm supp}(\mu)$.
\end{proposition}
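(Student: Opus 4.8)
The plan is to show that an arbitrarily small $H^r$-neighborhood of $0$ is reached with positive probability from a set of initial data of positive $\mu$-measure, and then to close the argument by invariance. By Proposition~\ref{prop:higher}, $\int_{H^r} \|\Vort\|_{H^{r+2}}^2 \, d\mu(\Vort) < \infty$, so by the Chebyshev inequality there is $R>0$ with $\mu(G_R) \geq 1/2$, where $G_R := \{\Vort \in H^r : \|\Vort\|_{H^{r+2}} \leq R\}$. Since $\mu$ is invariant, for every fixed $T>0$ we have $\mu(\BB(\eps)) = \int_{H^r} P_T(\Vort_0, \BB(\eps)) \, d\mu(\Vort_0) \geq \tfrac12 \inf_{\Vort_0 \in G_R} P_T(\Vort_0, \BB(\eps))$, so it suffices to exhibit a single time $T = T(\eps,R)$ for which $\inf_{\Vort_0 \in G_R} P_T(\Vort_0, \BB(\eps)) > 0$.

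Next I would use the decay of the unforced dynamics. Let $\Vort_d(t,\Vort_0)$ solve \eqref{eq:frac:SNSE} with $\sigma = 0$. Testing against $\Vort_d$ and using $\nabla \cdot (K \ast \Vort_d) = 0$ together with the normalization $\lambda_1 = 1$ gives $\tfrac{d}{dt}\|\Vort_d\|_{L^2}^2 \leq -2\|\Vort_d\|_{L^2}^2$, hence $\|\Vort_d(t)\|_{L^2} \leq e^{-t}\|\Vort_0\|_{L^2}$. On the other hand, Theorem~\ref{thm:smoothing} with $\sigma = 0$ furnishes a time-uniform bound $\sup_{t \geq 0} \|\Vort_d(t)\|_{H^{r+1}} \leq C\PP(R)$ for $\Vort_0 \in G_R$ (the forcing term in \eqref{eq:Smoothing:moments} vanishes, and the prefactor is independent of $T$). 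Interpolating between these two bounds shows $\|\Vort_d(t,\Vort_0)\|_{H^r} \leq \|\Vort_d(t)\|_{L^2}^{1/(r+1)} \|\Vort_d(t)\|_{H^{r+1}}^{r/(r+1)} \to 0$ as $t \to \infty$, uniformly over $G_R$, so there is $T = T(\eps,R)$ with $\sup_{\Vort_0 \in G_R} \|\Vort_d(T,\Vort_0)\|_{H^r} < \eps/2$.

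Finally I would compare the stochastic solution to $\Vort_d$ on a small-noise event. Since $\ZZF$ is finite and the $\sigma_k$ are smooth, the event $E_\delta := \{ \sup_{t \in [0,T]} \sum_{k \in \ZZF} |W^k(t)| \leq \delta \}$ has probability $\Prb(E_\delta) > 0$ for every $\delta>0$ (small-ball estimate for finitely many independent Brownian motions), and $\Prb(E_\delta)$ does not depend on $\Vort_0$. On $E_\delta$ the path $z := \sigma W$ is small in $C([0,T];H^{r+2})$. Working pathwise with $\bar\Vort = \Vort - z$, the difference $\eta = \bar\Vort - \Vort_d$ satisfies a linearized equation with $\eta(0) = 0$ and a forcing controlled by $z$; the transport and stretching contributions $B(\eta,\Vort) + B(\Vort_d,\eta)$ are estimated exactly as the terms $T_1, T_2$ in the proof of Proposition~\ref{prop:cont:dep}, closing the $H^r$ energy balance by means of the uniform $H^{r+1}$ bounds on $\Vort$ and $\Vort_d$ that hold on $E_\delta$. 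The Gr\"onwall lemma then yields $\sup_{[0,T]}\|\eta\|_{H^r} \leq C(T,R)\, \|z\|_{C([0,T];H^{r+2})}$, so by choosing $\delta$ small (depending on $\eps, R, T$) we obtain $\|\Vort(T,\Vort_0) - \Vort_d(T,\Vort_0)\|_{H^r} \leq \|\eta(T)\|_{H^r} + \|z(T)\|_{H^r} < \eps/2$ uniformly over $G_R$. Hence on $E_\delta$ we have $\|\Vort(T,\Vort_0)\|_{H^r} < \eps$, so $P_T(\Vort_0, \BB(\eps)) \geq \Prb(E_\delta) > 0$ uniformly in $\Vort_0 \in G_R$, and the reduction above gives $\mu(\BB(\eps)) \geq \tfrac12 \Prb(E_\delta) > 0$.

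The hard part is the continuous-dependence-on-forcing estimate in $H^r$: a naive $H^r$ energy estimate for $\eta$ loses a derivative through $B(\eta,\Vort)$, precisely the obstruction met in establishing the Feller property. The resolution is that the higher regularity built into $G_R$ and propagated by the smoothing of Theorem~\ref{thm:smoothing} keeps $\Vort, \Vort_d \in H^{r+1}$ uniformly on $[0,T]$ and on $E_\delta$, which absorbs this loss. Equally essential is that every constant ($T$, the Gr\"onwall constant, $\delta$, and $\Prb(E_\delta)$) be made uniform over the positive-measure set $G_R$, since only then does invariance close the argument.
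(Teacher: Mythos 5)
Your proof is correct in outline and reaches the conclusion by the same two-step skeleton as the paper --- a positive-mass ball of regular data obtained from the moment bounds on invariant measures (your $G_R$ versus the paper's \eqref{eq:WI:i}), followed by a uniform lower bound on $P_T(\Vort_0,\BB(\eps))$ over that ball (the paper's \eqref{eq:WI:ii}) --- but the second step is carried out by a genuinely different mechanism. The paper never introduces the deterministic flow: it shifts by the Ornstein--Uhlenbeck process $Z$ of \eqref{eq:Z}, notes via Proposition~\ref{prop:Z:small} that on $\Omega_{\delta,T}$ the coefficients of \eqref{eq:shifted:SNSE} are $O(\eps_{\delta,T})$ in $H^{r+2}$, and then shows \emph{directly} that the dissipation drives $\bar\Vort$ into a small $H^r$-ball, running the same $L^p \to H^1 \to H^r$ cascade (cancellation of the nonlinearity in $L^p$, then the commutator Lemma~\ref{lem:commutator}) as in the moment estimates; this only ever requires $H^r$ data and avoids any continuous-dependence estimate. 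You instead prove decay for the \emph{unforced} solution $\Vort_d$ by interpolating the exponential $L^2$ decay against a time-uniform $H^{r+1}$ bound, and then control $\Vort-\Vort_d$ on the small-noise event by a continuous-dependence-on-the-forcing estimate in the style of Proposition~\ref{prop:cont:dep}. Your route is conceptually cleaner (it isolates ``the unforced dynamics go to zero'' from ``small noise is a small perturbation''), at the price of needing one extra derivative of regularity on the data (hence the $H^{r+2}$ ball $G_R$, which Proposition~\ref{prop:higher} happily supplies) and a long-time Gr\"onwall constant that $\delta$ must then beat; the paper's route is more economical because the decay and the perturbation are absorbed into a single set of pathwise estimates.

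The one step you must actually supply is the \emph{pathwise} bound $\sup_{t\in[0,T]}\|\Vort(t,\Vort_0)\|_{H^{r+1}}\leq C(R,T)$ on $E_\delta$, uniformly over $\Vort_0\in G_R$, which your Gr\"onwall argument for $\eta$ silently invokes. The bound of Theorem~\ref{thm:smoothing} is an expectation bound and cannot be inserted into a pathwise Gr\"onwall exponent (and exponential moments are unavailable in $H^{r+1}$, which is precisely the obstruction the paper faces elsewhere). The fix is to derive this bound pathwise on the small-noise event by an energy estimate for $\bar\Vort=\Vort-z$ at level $H^{r+1}$, running the $L^{p_\gamma}\to H^1\to H^{r+1}$ cascade with the commutator lemma exactly as in the proof of Proposition~\ref{prop:decay:shifted} (which does this at level $H^r$); with $\Vort_0\in H^{r+2}$ this closes. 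Once that is in place your argument is complete.
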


\begin{proof}[Proof of Proposition~\ref{prop:WI}]
The proof is based on establishing two properties.
First, we show that there exists $\lambda>0$, and $\delta_1>0$ such that 
\begin{align}
\mu \left(\BB(\lambda) \right) \geq \delta_1 \label{eq:WI:i}
\end{align}
for every invariant measure $\mu$. Secondly, we prove that
for any $R>0$, $\eta>0$, there exist $T = T(R,\eta)>0$ and $\delta_2=\delta_2(R,\eta)>0$ such that 
\begin{align}
\inf_{\|\Vort_0\|_{H^r} \leq R} P_T(w_0, \BB(\eta)) \geq \delta_2 \label{eq:WI:ii}
\end{align}
where we recall that $P_T(\Vort_0, \BB(\eta)) = \Prb( \| \Vort(T,\Vort_0)\|_{H^r} \leq \eta)$.

To see that \eqref{eq:WI:i} and \eqref{eq:WI:ii} give the proof of the proposition, let $\mu \in \Pr(H^r)$ be invariant, and hence 
\begin{align*}
\mu(A) = P_t^\ast \mu(A) = \int_{H^r} P_t(\Vort_0, A) d\mu(\Vort_0)
\end{align*}
for any $A$ in the Borelians on $H^r$.
For $\eta >0$ arbitrary,  let $T$ such that \eqref{eq:WI:ii} holds, and let $\lambda$ be as in \eqref{eq:WI:i}. Thus we obtain by letting $R=\lambda$  that
\begin{align*}
\mu(\BB(\eta)) = \int_{H^r} P_T(\Vort_0, \BB(\eta)) d\mu(\Vort_0) \geq \int_{\BB(\lambda)}P_T(\Vort_0, \BB(\eta)) d\mu(\Vort_0) \geq \delta_1 \delta_2 >0.
\end{align*}
It thus remains to establish \eqref{eq:WI:i} and \eqref{eq:WI:ii}. 

In order to prove \eqref{eq:WI:i}, let $\mu$ be invariant, and let $\Vort_S$ be an associated stationary solution of \eqref{eq:frac:SNSE}. From the estimate \eqref{eq:KB:moment:3:stat}, and the Poincar\'e inequality we conclude that
\begin{align*}
\E \|\Vort_S\|_{H^{r}}^2 \leq C \PP(\| \sigma\|_{\HH^r}) <\infty.
\end{align*} 
Therefore, we have 
\begin{align*}
\mu(\BB(\lambda)^c) \leq \frac{1}{\lambda^2} \int_{H^r} \| \Vort\|_{H^r}^2 d \mu(\Vort) = \frac{1}{\lambda^2} \E \|\Vort_S\|_{H^r}^2 \leq \frac{C \PP(\|\sigma\|_{\HH^r})}{\lambda^2},
\end{align*}
and letting $\lambda$ be sufficiently large (independently of $\mu$), we obtain \eqref{eq:WI:i} with $\delta_1 = 1/2$, for example.

To establish \eqref{eq:WI:ii}, we consider the Ornstein-Uhlenbeck process $Z$ given by
\begin{align}
d Z + \Lambda^\gamma Z dt = \sigma dW, \quad Z(0) = 0 \label{eq:Z}
\end{align}
and consider the change of variables
\begin{align*}
\bar \Vort = \Vort - Z
\end{align*}
that obeys the PDE with random coefficients
\begin{align}
\partial_t \bar \Vort + \Lambda^\gamma \bar \Vort + B(\bar \Vort + Z,\bar \Vort) + B(\bar \Vort + Z, Z) = 0, \quad \bar \Vort(0) = \Vort_0.
\label{eq:shifted:SNSE}
\end{align}
For $\delta,T>0$, we introduce the set 
\begin{align}
\Omega_{\delta,T} = \left\{ w \in \Omega \colon | W_s^j (w) | \leq \delta, \mbox{ for all } s\in [0,T] \mbox{ and all } j \in  \ZZF \right\}.
\label{eq:Omega:dT}
\end{align}
Using standard properties of Brownian motion, since $|\ZZF|<\infty$ we know that for any $\delta,T>0$, there exists $\delta_2 = \delta_2(\delta,T)>0$ such that
\begin{align}
\Prb( \Omega_{\delta,T}) \geq \delta_2 > 0.
\label{eq:prb:Omega:dT}
\end{align}
On the set where the Brownian motions stay close to the origin, one may use the representation of $Z$ as a stochastic convolution to establish the classical fact.
\begin{proposition} \label{prop:Z:small}
Let $r>2$, $\gamma>0$, and $|\ZZF| < \infty$. For any $\delta, T >0$, there exists a deterministic constant $\eps_{\delta,T}  >0$ such that 
$\eps_{\delta,T} \to 0$ as $\delta \to 0$ for $T$ fixed, and such that
\begin{align}
\sup_{t \in [0,T]} \| Z(t,w) \|_{H^{r+2}} \leq \eps_{\delta,T}, \quad \mbox{ for all } w \in \Omega_{\delta,T},
\label{eq:Z:small}
\end{align}
where $\Omega_{\delta,T}$ is as defined in \eqref{eq:Omega:dT}.
\end{proposition}
Proposition \eqref{prop:Z:small} implies that for trajectories starting in $\Omega_{\delta,T}$, the coefficients of the nonlinear PDE \eqref{eq:shifted:SNSE} are small in $H^{r+1}$. Therefore, using the decay in time given by the dissipative operator $\Lambda^\gamma$, we may expect that after waiting a sufficient amount of time, the shifted vorticity $\bar \Vort$ is also small. More precisely, we have:
\begin{proposition}[\bf Decay for the shifted equations] \label{prop:decay:shifted}
Let $r>2,\gamma>0$, and $R,\eta>0$ be arbitrary. Then there exist $\delta,T >0$ such that for any $\Vort_0 \in \BB(R)$,
\begin{align}
\|\bar \Vort(T,\Vort_0)\|_{H^r} \leq \eta/2 \quad \mbox{on } \Omega_{\delta,T},
\label{eq:decay:shifted}
\end{align}
where $\Omega_{\delta,T}$ is as in \eqref{eq:Omega:dT}. 
\end{proposition}
Assuming Proposition~\ref{prop:decay:shifted} holds, we may now easily complete the proof of \eqref{eq:WI:ii}. Indeed, for $R,\eta>0$ given, we may find $T=T(R,\eta)$ sufficiently large, and $\delta = \delta(R,\eta)$ sufficiently small such that \eqref{eq:decay:shifted} holds. 
In addition, since $\eps_{\delta,T} \to 0$ as $\delta \to 0$, upon possibly further shrinking $\delta$ we can ensure that in \eqref{eq:Z:small} we have $\eps_{\delta,T} \leq \eta/2$.
Then, it follows that
\begin{align}
\inf_{\|\Vort_0\|_{H^r} \leq R} \Prb( \| \Vort(t,\Vort_0)\|_{H^r} \leq \eta)
&\geq  \inf_{\|\Vort_0\|_{H^r} \leq R} \Prb( \|\bar \Vort(t,\Vort_0)\|_{H^r} + \| Z(t) \|_{H^r} \leq \eta) \geq \Prb(\Omega_{\delta,T}) \geq \delta_2
\end{align}
by using \eqref{eq:prb:Omega:dT}. This concludes the prof of Proposition~\ref{prop:WI}, modulo the proof of Proposition~\ref{prop:decay:shifted}, which we establish next.
\end{proof}

\begin{proof}[Proof of Proposition~\ref{prop:decay:shifted}] Fix some $\Vort_0 \in \BB(R)$. Throughout this proof we will work pathwise on the set $\Omega_{\delta,T}$,  where $\delta$ and $T$ will be chosen suitably at the end of the proof. This ensures in view of Proposition~\ref{prop:Z:small} that for $w \in \Omega_{\delta,T}$ we have $\| Z(\cdot,w) \|_{L^\infty(0,T;H^{r+2})} \leq \eps =\eps_{\delta,T}$.

We first obtain a decay estimate on for high $L^p$ norms of the shifted vorticity $\bar \Vort$.  Let $p\geq 2$ be even. Multiplying \eqref{eq:shifted:SNSE} with $\bar \Vort^{p-1}$, integrating over $\TT$, and using \eqref{eq:CC:lower:bound} we obtain
\begin{align} 
\frac{d}{dt} \|\bar \Vort\|_{L^{p}}^{p} + \frac{1}{C_\gamma} \|  \bar \Vort \|_{L^p}^p 
&\leq C \| K \ast \bar \Vort\|_{L^{p}} \|\nabla Z\|_{L^\infty} \|\bar \Vort\|_{L^{p}}^{p-1} + C \|K\ast Z\|_{L^\infty} \|\nabla Z\|_{L^{p}} \|\bar \Vort\|_{L^{p}}^{p-1}\notag\\
&\leq C \eps \|\bar \Vort\|_{L^{p}}^{p} + C \eps^2 \|\bar \Vort\|_{L^{p}}^{p-1} \leq 2C \eps \|\bar \Vort\|_{L^{p}}^{p} +  C\eps^{p+1} 
\label{eq:w:bar:Lp:decay:1}
\end{align}
with $C_\gamma \geq 1$. Therefore, 
if $\delta$ is chosen so that
\begin{align} 
2 C \eps \leq \frac{1}{2 C_\gamma}, \label{eq:delta:cond:2}
\end{align}
we obtain from \eqref{eq:w:bar:Lp:decay:1} that
\begin{align} 
\frac{d}{dt} \|\bar \Vort\|_{L^{p}}^{p} + \frac{1}{2C_\gamma} \|\bar \Vort \|_{L^{p}}^{p} \leq  \eps^{p} 
\label{eq:w:bar:Lp:decay:2}.
\end{align}
Gr\"onwall and  \eqref{eq:w:bar:Lp:decay:2} thus yield
\begin{align} 
\| \bar \Vort(t)\|_{L^{p}}^{p} 
&\leq \|\Vort_0\|_{L^{p}}^{p} \exp \left( -\frac{t}{2C_\gamma}  \right)+  2 C(p) \eps^p . \label{eq:w:bar:Lp:decay}
\end{align}

Next, similarly to Section~\ref{sec:Vort:H1}, we multiply \eqref{eq:shifted:SNSE} with $\Delta \bar \Vort$ and integrate over $\TT$ to obtain
\begin{align} 
\frac{1}{2} \frac{d}{dt} \| \nabla \bar \Vort\|_{L^2}^{2} + \|\Lambda^{\gamma/2} \nabla \bar \Vort \|_{L^2}^2 
&= \int  B(\bar \Vort,\bar \Vort) \Delta \bar \Vort  + \int B(Z,\bar \Vort) \Delta \bar \Vort + \int B(\bar \Vort, Z) \Delta \bar \Vort + \int B(Z,Z) \Delta\bar \Vort \notag\\
&:= T_1 + T_2 +T_3 + T_4.\label{eq:w:bar:H1:decay:1}
\end{align}
Similarly to \eqref{eq:KB:step2:non:2} we estimate
\begin{align*}
|T_1| \leq \int |\nabla K \ast \bar \Vort| |\nabla \bar \Vort|^2 \leq \frac 12 \|\Lambda^{\gamma/2} \nabla \bar \Vort\|_{L^2}^2 + C  \|\bar \Vort\|_{L^{p_\gamma}}^{ p_\gamma},
\end{align*}
where 
\begin{align}
p_\gamma =  4 + \frac{4}{\gamma}.
\label{eq:p:gamma:2}
\end{align}
On the other hand, upon integrating by parts, using the Poincar\'e inequality, the Sobolev embedding, and estimate \eqref{eq:Z:small}, we obtain
\begin{align*} 
|T_2| + |T_3| + |T_4| \leq C \eps \|\nabla \bar \Vort\|_{L^2}^2 + C \eps^2 \|\nabla \bar \Vort\|_{L^2} \leq 2 C \eps \|\nabla \bar \Vort\|_{L^2}^2 + C \eps^3.
\end{align*}
Combining the above estimates yields
\begin{align} 
\frac{d}{dt} \| \nabla \bar \Vort\|_{L^2}^{2} + \| \nabla \bar \Vort \|_{L^2}^2 \leq C \eps \|\nabla \bar \Vort\|_{L^2}^2 + C \eps^3 + C \|\bar \Vort\|_{L^{p_\gamma}}^{p_\gamma} \label{eq:w:bar:H1:decay:2}
\end{align}
for some positive constant $C$. Thus, if we assume that $\eps$ obeys \eqref{eq:delta:cond:2}, with possibly a larger universal constant $C$, we obtain from \eqref{eq:w:bar:Lp:decay} and \eqref{eq:w:bar:H1:decay:2} that
\begin{align} 
\|\nabla \bar \Vort(t) \|_{L^2}^2 
&\leq \|\nabla \Vort_0\|_{L^2}^2 \exp \left(- \frac t 2\right) + C \eps^2 +C(p_\gamma)   \| \Vort_0\|_{L^{p_\gamma}}^{p_\gamma}  \exp\left( - \frac{t}{2C_\gamma} \right)\notag\\
&\leq C(p_\gamma) (1+R^{p_\gamma}) \exp\left( - \frac{t}{2 C_\gamma} \right) + C \eps^2
\label{eq:w:bar:H1:decay}
\end{align}
for all $t\geq 0$, since $\Vort_0$ lies in the ball of radius $R$ around the origin in $H^r$.

We complete the proof of the proposition using estimates that are similar to those in Section~\ref{sec:Vort:Hr}. Taking an $L^2$ inner product of \eqref{eq:shifted:SNSE} with $\Lambda^{2r} \bar \Vort$, and using a commutator estimate for the term corresponding to $T_2$, we obtain
\begin{align} 
\frac{1}{2} \frac{d}{dt} \|\bar \Vort\|_{H^r}^2 + \frac 34 \|\bar \Vort\|_{H^{r+\gamma/2}}^2
&\leq \left| \int [\Lambda^r, (K\ast \bar \Vort)\cdot \nabla]\bar \Vort \Lambda^r \bar \Vort \right| + C (\eps+\eps^2) \|\bar \Vort\|_{H^r}^2 + C \eps^2 \|\bar \Vort\|_{H^r},
\label{eq:w:bar:Hr:decay:1}
\end{align}
where $[\cdot,\cdot]$ denotes the usual commutator, and we have appealed to \eqref{eq:Hs:product}, the Poincar\'e inequality, the Sobolev embedding, and estimate \eqref{eq:Z:small}. Using the commutator estimate in Lemma~\ref{lem:commutator}, we conclude that there exists 
$ q = q(r,\gamma) \geq 2$, such that \eqref{eq:w:bar:Hr:decay:1} becomes
\begin{align} 
\frac{d}{dt} \|\bar \Vort\|_{H^r}^2 + \|\bar \Vort\|_{H^{r+\gamma/2}}^2 \leq C \|\bar \Vort\|_{H^1}^q + C \eps \|\bar \Vort\|_{H^r}^2 + C \eps^3
\label{eq:w:bar:Hr:decay:2}.
\end{align}
Again, in view of the smallness condition \eqref{eq:delta:cond:2} on $\eps$, with a possibly larger constant $C$, we conclude from \eqref{eq:w:bar:Hr:decay:2} that 
\begin{align} 
\|\bar \Vort(t) \|_{H^r}^2 
&\leq \| \Vort_0 \|_{H^r}^2 \exp\left(-\frac t2\right) + C \eps^3 + C (1+ R^{p_\gamma})^{q/2} \int_0^t \exp\left(-\frac{t-s}{2} \right) \exp\left(-\frac{s q}{4C(p_\gamma)} \right) ds \notag\\
&\leq C (1 + R^{p_\gamma})^{q/2} \exp\left(-\frac{tq}{4C_\gamma} \right) + C \eps^3.
\label{eq:w:bar:Hr:decay}
\end{align}
where $q$ is as given in Lemma~\ref{lem:commutator}, and $p_\gamma$ is given by \eqref{eq:p:gamma:2}.

The proof of \eqref{eq:decay:shifted} is now complete by letting $\delta$ be sufficiently small such that  \eqref{eq:delta:cond:2}, and $C \eps_{\delta,T}^3 \leq \eta/4$ hold, and then letting $T$ be large enough so that $C (1+R^{p_\gamma})^{q/2} \exp(-Tq/4C_\gamma) \leq \eta/4$.
\end{proof}

\appendix

\section{Lower bound for the fractional Laplacian in \texorpdfstring{$L^p$}{Lp}}
\setcounter{equation}{0}
\label{app:Poincare}

Let ${\mathbb{T}}^d = (-\pi,\pi]^d$, and let $\theta(x)$ be a smooth enough scalar, and have zero mean, that is $\int_{{\mathbb{T}}^d} \theta(x) dx = 0$. 
We recall (see e.g. \cite{CordobaCordoba2004,RoncalStinga12} and references therein) the definition of the fractional Laplacian on the torus. For $\gamma \in (0,2)$ we have
\begin{align*}
\Lambda^\gamma \theta(x) = P.V. \int_{{\mathbb{T}}^d} \left(  \theta (x)- \theta (y) \right) K_\gamma(x-y) dy
\end{align*}
where for $z\neq 0$ the kernel $K_\gamma$ is defined as
\begin{align*}
K_\gamma(z) = c_{d,\gamma} \sum_{k \in \ZZ^d} \frac{1}{|z- 2\pi k|^{d+\gamma}}
\end{align*}
and the normalization constant is 
\begin{align*}
c_{d,\gamma} = \frac{2^\gamma \Gamma( (n+\gamma)/2)}{|\Gamma(-\gamma/2)| \pi^{d/2}}.
\end{align*}

Let $p\geq 2$ be even. The goal of this appendix is to prove:
\begin{proposition}[\bf Fractional $L^p$ Poincar\'e]\label{prop:Poincare}
Let $p\geq 2$ be even, $0 \leq \gamma \leq 2$, and let $\theta$ have zero mean on ${\mathbb T}^d$, where $d\geq 1$. Then 
\begin{align}
\int_{{\mathbb{T}}^d} \theta^{p-1}(x) \Lambda^\gamma \theta(x) dx \geq \frac{1}{C_{d,\gamma}} \|\theta\|_{L^p}^p + \frac{1}{p} \| \Lambda^{\gamma/2}  ( \theta^{p/2}) \|_{L^2}^2 
\label{eq:Poincare}
\end{align}
holds, with an explicit constant $C_{d,\gamma} \geq 1$ given by \eqref{eq:poincare:constant} below. 
\end{proposition}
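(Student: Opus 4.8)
The plan is to rewrite the left side of \eqref{eq:Poincare} as the nonnegative symmetric form generated by the kernel $K_\gamma$ and then to play two lower bounds against each other: the Cordoba--Cordoba pointwise inequality will reproduce the $H^{\gamma/2}$ seminorm of $\theta^{p/2}$ but \emph{with a constant strictly larger than $1/p$}, while the uniform positivity of the periodic kernel---together with the fact that $\theta$, and not $\theta^{p/2}$, has zero mean---will produce the missing $L^p$ term. Throughout I take $p\geq 2$ even and $\gamma\in(0,2)$; the endpoint $\gamma=0$ ($\Lambda^0=\mathrm{Id}$) is immediate, and the local endpoint $\gamma=2$ ($\Lambda^2=-\Delta$) I would treat separately by integrating by parts and invoking a classical constrained Poincar\'e estimate.

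First I would symmetrize. Inserting the singular integral definition of $\Lambda^\gamma$, using $K_\gamma(z)=K_\gamma(-z)$ and relabelling $x\leftrightarrow y$ gives
\begin{align*}
\int_{{\mathbb T}^d}\theta^{p-1}\Lambda^\gamma\theta\,dx
=\frac12\intint G(\theta(x),\theta(y))\,K_\gamma(x-y)\,dx\,dy,\qquad G(a,b):=(a-b)(a^{p-1}-b^{p-1}).
\end{align*}
Since $p$ is even, $t\mapsto t^{p-1}$ is increasing, so $G\geq0$ pointwise; for smooth $\theta$ and $\gamma<2$ the factor $G$ vanishes quadratically on the diagonal, so $GK_\gamma$ is genuinely integrable, no principal value survives, and every manipulation below is an honest inequality between nonnegative integrals. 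I will also use the companion identity $\|\Lambda^{\gamma/2}(\theta^{p/2})\|_{L^2}^2=\frac12\intint(\theta(x)^{p/2}-\theta(y)^{p/2})^2K_\gamma(x-y)\,dx\,dy$.

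The two bounds are as follows. For the seminorm, writing $a^{p/2}-b^{p/2}=\frac p2\int_b^a t^{p/2-1}dt$ and $a^{p-1}-b^{p-1}=(p-1)\int_b^a t^{p-2}dt$ and applying Cauchy--Schwarz on the interval (the exponent $p-2$ is even, so $t^{p-2}\geq0$ and the sign of $a,b$ causes no trouble) yields the pointwise algebraic inequality $G(a,b)\geq c_p\,(a^{p/2}-b^{p/2})^2$ with $c_p:=4(p-1)/p^2$, valid for all real $a,b$; integrating against $\frac12K_\gamma$ gives $\int\theta^{p-1}\Lambda^\gamma\theta\,dx\geq c_p\|\Lambda^{\gamma/2}(\theta^{p/2})\|_{L^2}^2$, and the crucial point is $c_p>1/p$ for every $p\geq2$. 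For the Lebesgue term, on ${\mathbb T}^d$ I retain only the $k=0$ summand and bound $|x-y|$ by the diameter to get the uniform lower bound $K_\gamma(x-y)\geq c_{d,\gamma}(2\pi\sqrt d)^{-d-\gamma}=:\kappa_{d,\gamma}>0$; since $G\geq0$ this gives $\int\theta^{p-1}\Lambda^\gamma\theta\,dx\geq\frac{\kappa_{d,\gamma}}2\intint G(\theta(x),\theta(y))\,dx\,dy$, and expanding $G=a^p-ab^{p-1}-a^{p-1}b+b^p$ and using $\int_{{\mathbb T}^d}\theta\,dx=0$ to annihilate the two cross terms leaves exactly $\frac12\intint G=(2\pi)^d\|\theta\|_{L^p}^p$, whence $\int\theta^{p-1}\Lambda^\gamma\theta\,dx\geq\kappa_{d,\gamma}(2\pi)^d\|\theta\|_{L^p}^p$.

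Finally I would combine the two estimates by a convex combination. With $I:=\int_{{\mathbb T}^d}\theta^{p-1}\Lambda^\gamma\theta\,dx$ and the choice $\lambda=1/(pc_p)=p/(4(p-1))\in(1/4,1/2]$, one obtains
\begin{align*}
I=\lambda I+(1-\lambda)I
\geq\lambda c_p\|\Lambda^{\gamma/2}(\theta^{p/2})\|_{L^2}^2+(1-\lambda)\kappa_{d,\gamma}(2\pi)^d\|\theta\|_{L^p}^p
=\frac1p\|\Lambda^{\gamma/2}(\theta^{p/2})\|_{L^2}^2+(1-\lambda)\kappa_{d,\gamma}(2\pi)^d\|\theta\|_{L^p}^p,
\end{align*}
which is exactly \eqref{eq:Poincare}; since $1-\lambda\geq1/2$ uniformly in $p$, one may take $C_{d,\gamma}=2\big(\kappa_{d,\gamma}(2\pi)^d\big)^{-1}$ in \eqref{eq:poincare:constant}. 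The conceptual crux---and the one place the argument genuinely departs from \cite{CordobaCordoba2004}---is the second bound: because $\theta^{p/2}$ is not mean-zero for $p\geq4$, the $L^p$ term cannot be recovered by applying a spectral gap to $\theta^{p/2}$, and is instead produced by the uniform positivity of the torus kernel acting on the genuinely mean-zero function $\theta$. I expect the only real technical care to be the rigorous justification of these singular-integral manipulations for smooth $\theta$ and the separate handling of the local endpoint $\gamma=2$, where the positive-kernel mechanism is unavailable.
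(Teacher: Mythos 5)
Your argument is correct for $\gamma\in(0,2)$, but it reaches \eqref{eq:Poincare} by a genuinely different route than the paper. The paper uses the \emph{exact} decomposition $p(a^{p-1}-b^{p-1})(a-b)=f_p(a,b)+2(a^{p/2}-b^{p/2})^2$, isolating a nonnegative remainder $f_p$, and then extracts the $L^p$ term from $f_p$ alone via the pointwise inequality $f_p(a,b)\geq(p-2)(a-b)^2a^{p-2}$ (estimate \eqref{eq:trick}), whose verification is a somewhat laborious polynomial analysis; the Lebesgue norm is then recovered by combining the uniform positivity of the periodic kernel with the expansion of $(a-b)^2a^{p-2}$ and the zero mean of $\theta$. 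You instead prove two independent global lower bounds on the same symmetric form $\frac12\intint G\,K_\gamma$ --- the Stroock--Varopoulos inequality $G(a,b)\geq \frac{4(p-1)}{p^2}(a^{p/2}-b^{p/2})^2$ with its sharp constant strictly exceeding $1/p$ (a one-line Cauchy--Schwarz, valid for all real $a,b$ since $p-2$ is even), and the kernel positivity bound applied directly to $G$, whose cross terms both vanish by the zero mean so that $\frac12\intint G=|\mathbb{T}^d|\,\|\theta\|_{L^p}^p$ exactly --- and you interpolate between them by a convex combination calibrated so that the seminorm coefficient lands exactly on $1/p$. Both proofs exploit the same two structural facts (the surplus over the constant $1/p$, and kernel positivity plus the mean-zero condition on $\theta$ itself rather than on $\theta^{p/2}$), but your version buys a cleaner algebraic core: it replaces the proof of \eqref{eq:trick} entirely, treats $p=2$ on the same footing as $p\geq 4$ (the paper must handle $p=2$ separately since $f_2\equiv 0$), and yields an explicit constant of the same quality as \eqref{eq:poincare:constant}. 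One caveat: your deferral of $\gamma=2$ to ``a classical constrained Poincar\'e estimate'' is no more than a gesture --- the needed inequality $\|\nabla(\theta^{p/2})\|_{L^2}^2\gtrsim\|\theta\|_{L^p}^p$ for mean-zero $\theta$ does not follow from the standard Poincar\'e inequality applied to $\theta^{p/2}$ (which is not mean zero) nor, in $d\geq 2$, from the mere fact that $\theta^{p/2}$ vanishes somewhere --- but the paper's own one-sentence dismissal of this endpoint (``follows upon integration by parts'') has exactly the same gap, so this is not a defect relative to the source.
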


\begin{proof}[Proof of Proposition~\ref{prop:Poincare}]
Of course, unless $\theta$ has zero mean, we cannot expect \eqref{eq:Poincare} to hold, as can be seen by letting $\theta=1$. Also, when $p=2$, inequality \eqref{eq:Poincare} trivially holds (it's just the Poincar\'e inequality) by staring at the Fourier series. Hence for the rest of the proof we let $p \geq 4$ be even. Lastly, the case $\gamma=0$ trivially holds, while the case $\gamma=2$ is follows upon integration by parts.

For $0<\gamma<2$ we have 
\begin{align}
& \int \theta^{p-1}(x) \Lambda^\gamma \theta(x) dx \notag\\
&= P.V. \intint \theta^{p-1}(x) \left( \theta(x) - \theta(y) \right) K_\gamma(x-y) dy dx\notag\\
&= \frac 12 P.V. \intint \left(\theta^{p-1}(x) -\theta^{p-1}(y) \right) \left( \theta(x) - \theta(y) \right) K_\gamma(x-y) dy dx \notag\\
&= \frac{1}{2p} P.V. \intint \left( p\left(\theta^{p-1}(x) -\theta^{p-1}(y) \right) \left( \theta(x) - \theta(y) \right)  - 2 \left( \theta^{p/2}(x) -\theta^{p/2}(y) \right)^2  \right) K_\gamma(x-y) dy dx \notag\\
&\qquad + \frac 1p P.V. \intint \left( \theta^{p/2}(x) -\theta^{p/2}(y) \right)^2   K_\gamma(x-y) dy dx \notag\\
&= \frac{1}{2p} P.V. \intint f_p(\theta(x),\theta(y)) K_\gamma(x-y) dy dx + \frac{1}{p} \| \Lambda^{\gamma/2}  ( \theta^{p/2}) \|_{L^2}^2 
=: \frac{1}{2p} {\mathcal T} +  \frac{1}{p} \| \Lambda^{\gamma/2}  ( \theta^{p/2}) \|_{L^2}^2 
\label{eq:identity}
\end{align}
where the double integral is over ${\mathbb{T}}^{2d}$, and we have defined
\begin{align*}
f_p(a,b) = p(a^{p-1} - b^{p-1}) (a-b) -2 (a^{p/2} - b^{p/2})^2.
\end{align*}
it can be easily seen that $f_p(a,b) \geq 0$ on $\RR^2$ when $p$ is even, and so the term ${\mathcal T}$ is positive. Usually the term ${\mathcal T}$ is dropped in establishing lower bounds. The {trick} is that exactly ${\mathcal T}$ gives the lower bound \eqref{eq:Poincare}.

We next claim that for $p\geq 4$ even, and $a,b \in \RR$ we have
\begin{align}
f_p(a,b) \geq (p-2) (a-b)^2 a^{p-2}. \label{eq:trick}
\end{align}
Assuming for the moment that \eqref{eq:trick} holds, let us prove \eqref{eq:Poincare}. Since $K_\gamma$ is positive, we have
\begin{align}
{\mathcal T} &\geq (p-2)P.V. \intint (\theta(x) - \theta(y))^2 \theta(x)^{p-2} K_\gamma(x-y) dy dx \notag\\
&\geq (p-2)  c_{d,\gamma} \intint (\theta(x) - \theta(y))^2 \theta(x)^{p-2} \frac{1}{|x-y-2\pi|^{d+\gamma}} dy dx\notag\\
&\geq \frac{(p-2) c_{d,\gamma} }{(2 \pi + |{\rm diam}({\mathbb{T}}^d)|)^{d+\gamma}} \intint (\theta(x) - \theta(y))^2 \theta(x)^{p-2} dy dx \notag\\
&=  \frac{(p-2) c_{d,\gamma} }{(2 \pi + |{\rm diam}({\mathbb{T}}^d)|)^{d+\gamma}} \intint \left( \theta^p(x) - 2 \theta^{p-1}(x) \theta(y)+ \theta^{p-2}(x) \theta^2(y) \right) dy dx\notag\\
&\geq  \frac{(p-2) c_{d,\gamma} }{(2 \pi + |{\rm diam}({\mathbb{T}}^d)|)^{d+\gamma}} \int_{{\mathbb{T}}^d} \left( \theta^p(x) |{\mathbb{T}}^d| - 2 \int_{{\mathbb{T}}^d} \theta^{p-1}(x) \theta(y) dy \right)  dx.
\label{eq:lower}
\end{align}
At this point we use that $\theta$ has zero mean, as it implies
\begin{align*}
\int_{{\mathbb{T}}^d} \theta^{p-1}(x) \theta(y) dy = 0
\end{align*}
for every $x$. It then follows from \eqref{eq:lower},  that 
\begin{align}
{\mathcal T} \geq \frac{(p-2) c_{d,\gamma}  |{\mathbb{T}}^d|}{(2 \pi + |{\rm diam}({\mathbb{T}}^d)|)^{d+\gamma}}  \| \theta\|_{L^p}^p.
\label{eq:poincare:constant}
\end{align}
This proves \eqref{eq:Poincare} with the constant 
\begin{align*}
\frac{(p-2) 2^\gamma \Gamma( (n+\gamma)/2) |{\mathbb{T}}^d|}{2 p (2 \pi + |{\rm diam}({\mathbb{T}}^d)|)^{d+\gamma}|\Gamma(-\gamma/2)| \pi^{d/2}} \geq \frac{2^\gamma \Gamma( (n+\gamma)/2) |{\mathbb{T}}^d|}{4 (2 \pi + |{\rm diam}({\mathbb{T}}^d)|)^{d+\gamma}|\Gamma(-\gamma/2)| \pi^{d/2}} = \frac{1}{C_{d,\gamma}}
\end{align*}
for any $p\geq 4$. It remains to prove the inequality \eqref{eq:trick}, which we do next.
\end{proof}

\begin{proof}[Proof of Estimate \eqref{eq:trick}]
First let $b=0$. Then \eqref{eq:trick} holds, with equality. Next, let $r = a/b$. Since $p\geq 4$ is even, checking \eqref{eq:trick} is equivalent to verifying 
\begin{align}
g_p(r):= p (r^{p-1}-1) (r-1) - 2 (r^{p/2} - 1)^2 \geq (p-2) (r-1)^2 r^{p-2} =: h_p(r).
\label{eq:check}
\end{align}
Next, note that $h_p(r) = h_p(-r)$, and that when $r>0$ we have
\begin{align*}
g_p(-r) = p(r^{p-1}+1)(r+1) - 2 (r^{p/2}-1)^2 
&= p (r^p + r^{p-1} + r + 1) - 2 (r^p - 2 r^{p/2} + 1)\notag\\
&\geq (p-2) (r^p + r^{p-2} ) \geq h_p(r) = h_p(-r).
\end{align*}
This shows that we just need to check \eqref{eq:check} for $r \geq 0$. It clearly holds at $r=0$, and also for $r \gg 1$ since it's a $p^{th}$ degree polynomial with leading coefficient $p-2 \geq 2$. Letting
\begin{align*} 
m(r) = r^{p/2-2}  + \ldots + 1 = \frac{r^{p/2-1}-1}{r-1}
\end{align*}
we can explicitly write
\begin{align}
\frac{g_p(r) - h_p(r)}{(r-1)^2} 
&= p  m(r) (r^{p/2-1}+1)  - 4  r^{p/2-1} m(r)  - 2 m(r)^2 \notag\\
& = m(r) \left( (p-4) r^{p/2-1} + p - 2 m(r) \right).\label{eq:check:2}
\end{align}
If $r \leq 1$ we are done, since $m(r) \leq m(1) = p/2 - 1$, and hence $p - 2 m(r) \geq 2$. On the other hand, if $r> 1$, we have
\begin{align} 
(p-4) r^{p/2-1} + p - 2 m(r) 
&= \frac{1}{r-1} \left( (p-4)r^{p/2} - (p-2)r^{p/2-1} + p r - (p-2)  \right) = \frac{q(r)}{r-1}.\label{eq:check:3}
\end{align}
We have $q(1)= 0$, and 
\begin{align}
2 q'(r) = p(p-4) r^{p/2-1} - (p-2)^2 r^{p/2-2} + 2p \geq 2 q'(1) = p(p-4) - (p-2)^2 + 2p = 2 (p-2) >0.
\notag
\end{align}
This proves the right side of \eqref{eq:check:3} is positive for $r \geq 1$, and thus the right side of \eqref{eq:check:2} is non-negative for $r\geq 1$ as well.
This concludes the proof of \eqref{eq:check} for all $r$, and hence of \eqref{eq:trick}.
\end{proof}

\section{Bound on the nonlinear term in Sobolev spaces}
\setcounter{equation}{0}
\label{app:nonlinear:Sobolev}

Recall the following classical commutator estimate.
Let $s >1$, $p \in (1,\infty)$, $f$ and $g$ be smooth zero-mean functions on $\TT$. Then we have the (Kenig-Ponce-Vega) commutator estimate
\begin{align}
\| \Lambda^s (f \cdot \nabla g) - f \cdot \nabla \Lambda^s g \|_{L^p} 
\leq C \left( \|\nabla f \|_{L^{p_1}} \|\Lambda^s g\|_{L^{p_2}} + \|\Lambda^s f\|_{L^{p_3}} \|\nabla g\|_{L^{p_4}} \right) \label{eq:comm:abstract}
\end{align}
where $1/p = 1/p_1 + 1/p_2 = 1/p_3 + 1/p_4$, and $p_i \in (1,\infty)$, for a sufficiently large constant $C$ that depends only on $s,p,p_i$ and the size of the periodic box. Similarly, we also make use of the fractional calculus (Kato-Ponce) inequality
\begin{align}
\|\Lambda^s (fg)\|_{L^{p}} \leq C \big(\|\Lambda^s f\|_{L^{p_1}}\|g\|_{L^{p_2}}+\|\Lambda^s g\|_{L^{p_3}}\|f\|_{L^{p_4}}\big),
\label{eq:Hs:product}
\end{align}
which is valid for sufficiently regular $f,g$, for a constant $C$ independent of $f,g$, and  for any
choice of $s \geq 0$, $1<p<\infty$,  $1<p_i \leq \infty$ and $1/p = 1/p_1 + 1/p_2 = 1/p_3 + 1/p_4$.
See, e.g. \cite{Taylor91,MuscaluSchlag13}.

For $p \in [2, \infty)$ and $f$ as above we have the Sobolev embedding
\begin{align}
\| f\|_{L^p} \leq C \|\Lambda^{1 -\frac{2}{p}} f\|_{L^2} \label{eq:sobolev:abstract}
\end{align}
for a sufficiently large constant $C$ that depends only on $p$ and the size of the periodic box.

The purpose of this appendix is to prove:
\begin{lemma}[\bf Commutator estimate]
\label{lem:commutator}
 Let $s> 1$, $\gamma \in (0,2]$, and $\Vort$ be smooth of zero-mean on $\TT$.
 Then, for any $\eps \in (0,1)$ we have
 \begin{align}
{\mathcal T} = \left| \int [\Lambda^s, u \cdot \nabla] \Vort \Lambda^s \Vort dx \right| \leq C \|\Vort\|_{H^1}^{q} + \eps \|\Vort\|_{H^{s+\gamma/2}}^{2} \label{eq:comm:sobolev}
\end{align}
where  
\begin{align*}
q = \frac{ 4 ( (4+\gamma) (s+\gamma) - 4 )}{\gamma (6 + \gamma)}
\end{align*}
for a sufficiently large constant $C$ that depends on $\eps,s,\gamma$, and the size of the box.
\end{lemma}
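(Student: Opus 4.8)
The plan is to estimate the commutator $[\Lambda^s, u\cdot\nabla]\Vort$ in $L^2$ via the Kenig-Ponce-Vega inequality \eqref{eq:comm:abstract}, then pair it against $\Lambda^s\Vort$ by Cauchy-Schwarz, and finally interpolate all the resulting Sobolev norms between the lowest norm $\|\Vort\|_{H^1}$ and the highest dissipation-controlled norm $\|\Vort\|_{H^{s+\gamma/2}}$, absorbing the top-order piece into $\eps\|\Vort\|_{H^{s+\gamma/2}}^2$ by Young's inequality. The exponent $q$ should then fall out of bookkeeping the interpolation weights.

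First I would apply \eqref{eq:comm:abstract} with $f=u=K\ast\Vort$ and $g=\Vort$, choosing the integrability exponents $p_i$ so that each factor lands in a space embeddable (via \eqref{eq:sobolev:abstract}) into a fractional $L^2$-based Sobolev norm with total smoothness budget $s+\gamma/2$. Concretely I would take the $L^2$ pairing $\mathcal{T}\le \|[\Lambda^s,u\cdot\nabla]\Vort\|_{L^2}\|\Lambda^s\Vort\|_{L^2}$, and split the commutator using \eqref{eq:comm:abstract} at $p=2$, with dual Lebesgue pairs $1/2=1/p_1+1/p_2$ tuned to exploit that $u=K\ast\Vort$ gains one derivative over $\Vort$. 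A natural balanced choice, mirroring the $\eps=2\gamma/(4-\gamma)$ trick used in \eqref{eq:KB:step2:non:1}, is to put both $\nabla u$ (or $\Lambda^s u$) and $\Lambda^s g$ (or $\nabla g$) into an $L^{2+\delta}$ space with $\delta$ chosen so that $H^{\gamma/4}\hookrightarrow L^{2+\delta}$, so that each factor can be written as $\|\Vort\|_{H^{\,\cdot\,+\gamma/4}}$ up to Riesz-transform boundedness.

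Next I would convert every factor into the two anchor norms. Writing each intermediate Sobolev index $s'\in(1,s+\gamma/2)$ as a convex combination $s' = \theta\cdot 1 + (1-\theta)(s+\gamma/2)$, the Gagliardo-Nirenberg interpolation $\|\Vort\|_{H^{s'}}\le C\|\Vort\|_{H^1}^{\theta}\|\Vort\|_{H^{s+\gamma/2}}^{1-\theta}$ turns the product bound into something of the form $\|\Vort\|_{H^1}^{a}\|\Vort\|_{H^{s+\gamma/2}}^{b}$. I would then invoke $\eps$-Young with conjugate exponents $2/b$ and $2/(2-b)$ to extract exactly $\eps\|\Vort\|_{H^{s+\gamma/2}}^2$ and leave $C\|\Vort\|_{H^1}^{2a/(2-b)}$; matching $2a/(2-b)=q$ should reproduce the stated $q=4((4+\gamma)(s+\gamma)-4)/(\gamma(6+\gamma))$ after substituting the specific interpolation weights dictated by the $\gamma/4$ budget and the Sobolev exponent $\tfrac{6+\gamma}{\ldots}$ appearing in the denominator.

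The main obstacle I anticipate is the delicate exponent accounting: one must verify that the chosen Lebesgue triple in \eqref{eq:comm:abstract} is admissible ($p_i\in(1,\infty)$), that the total derivative count across the two factors never exceeds $s+\gamma/2$ (so that only a single $\eps$-absorption is needed and no uncontrolled norm appears), and that the resulting Young exponent $b$ satisfies $b<2$ strictly so the absorption is legitimate. Getting the precise combinatorics to collapse to the advertised closed-form $q$—in particular tracking the factor $(6+\gamma)$ in the denominator, which signals a specific Sobolev embedding threshold tied to $\gamma/4$ smoothing against an $L^{(6+\gamma)/\ldots}$ norm—is the step most likely to require care rather than any conceptual difficulty. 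I expect no cancellation structure is needed here (unlike the $T_2$ term in Proposition~\ref{prop:cont:dep}), since the commutator already removes the top-order derivative from $\Vort$ in the transport slot.
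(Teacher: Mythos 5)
Your overall strategy --- H\"older, the Kenig--Ponce--Vega commutator bound \eqref{eq:comm:abstract}, Sobolev embedding \eqref{eq:sobolev:abstract}, interpolation between $H^1$ and $H^{s+\gamma/2}$, and an $\eps$-Young absorption --- is exactly the family of arguments the paper uses, and your variant does close. The difference is where the Lebesgue exponents are placed. You pair in $L^2\times L^2$ and push the excess integrability into the commutator factors; the paper instead pairs in $L^{2-\delta}\times L^{(2-\delta)/(1-\delta)}$ with $\delta$ chosen so that $\delta/(2-\delta)=\gamma/2$, so that after Sobolev embedding the pairing factor $\|\Lambda^s\Vort\|_{L^{(2-\delta)/(1-\delta)}}$ \emph{is} the top norm $\|\Lambda^{s+\gamma/2}\Vort\|_{L^2}$, the commutator is measured in $L^{2-\delta}$ (below $L^2$, so its factors cost fewer derivatives), and only one intermediate norm $\|\Lambda^{s-\gamma/(4+\gamma)}\Vort\|_{L^2}$ needs to be interpolated. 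Both routes yield a bound of the form $C\|\Vort\|_{H^1}^{q}+\eps\|\Vort\|_{H^{s+\gamma/2}}^2$, and yours is no harder to run.

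Two concrete issues, though. First, the exponent choice you sketch is not H\"older-admissible as written: you cannot put \emph{both} factors of a product into $L^{2+\delta}$ with $H^{\gamma/4}\hookrightarrow L^{2+\delta}$ (i.e.\ $2+\delta=8/(4-\gamma)$) and land the product in $L^2$, since $2/(2+\delta)=1/2$ forces $2+\delta=4$, consistent only at $\gamma=2$; the admissible version is one factor in $L^{8/(4-\gamma)}$ and its partner in $L^{8/\gamma}$. Second, once the exponents are fixed admissibly, the bookkeeping does \emph{not} reproduce the stated $q$: for instance the leading term carries total derivative count $2s+1+\gamma/4$ over three factors, giving $b=(2s-2+\gamma/4)/(s-1+\gamma/2)$ and $q=2a/(2-b)=(8s-8+10\gamma)/(3\gamma)$, which differs from $4((4+\gamma)(s+\gamma)-4)/(\gamma(6+\gamma))$ (e.g.\ $6$ versus $44/7$ at $s=2$, $\gamma=1$). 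The factor $(6+\gamma)$ in the paper comes from interpolating the single intermediate index $s-\gamma/(4+\gamma)$, via $\gamma/2+\gamma/(4+\gamma)=\gamma(6+\gamma)/(2(4+\gamma))$, not from a Sobolev threshold as you conjecture. Since your $q$ comes out smaller and the lemma is only invoked downstream through bounds of the form $C(1+\|\Vort\|_{H^1}^{p})$ with unspecified degree, this is harmless for the application; but as a proof of the lemma \emph{as stated} you must either accept a different $q$ or pass to the larger stated exponent, which costs an additive constant when $\|\Vort\|_{H^1}<1$.
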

\begin{proof}[Proof of Lemma~\ref{lem:commutator}]
 Let $0 < \delta \ll 1$ to be chosen precisely below, and $p= 2-\delta$. The H\"older inequality and the commutator estimate \eqref{eq:comm:abstract} yield 
 \begin{align*}
{\mathcal T}
&\leq \| [\Lambda^s, u \cdot \nabla] \Vort \|_{L^p} \| \Lambda^s \Vort\|_{L^{\frac{p}{p-1}}} \notag\\
&\leq C \| \Lambda^s \Vort\|_{L^{\frac{2-\delta}{1-\delta}}} \left( \| \Vort\|_{L^{\frac{(2-\delta)(4-\delta)}{\delta}}} \| \Lambda^s \Vort\|_{L^{\frac{4-\delta}{2}}}  +  \| \Lambda^{s-1} \Vort\|_{L^{\frac{2 (2-\delta)}{\delta}}} \| \nabla \Vort\|_{L^2}  \right) 
\end{align*}
by setting $p_1= (2-\delta)(4-\delta)/\delta$, $p_2 = (4- \delta)/2 $, $p_3 = 2(2-\delta)/\delta$, and $p_4 = 2$. Using the Sobolev embedding \eqref{eq:sobolev:abstract} and the Poincar\'e inequality we obtain
\begin{align*}
{\mathcal T} 
&\leq C \| \Lambda^{s + \frac{\delta}{2-\delta}} \Vort\|_{L^2}  
		\| \Lambda^{1 - \frac{2\delta}{(2-\delta)(4-\delta)}} \Vort\|_{L^2} 
		\| \Lambda^{s - \frac{\delta}{4-\delta}} \Vort\|_{L^2}  
	+ C \| \Lambda^{s + \frac{\delta}{2-\delta}} \Vort\|_{L^2} 
		 \| \Lambda^{s - \frac{\delta}{2-\delta}} \Vort\|_{L^2}  
		 \| \Lambda \Vort\|_{L^2}  \notag\\
& \leq C \| \Lambda^{s + \frac{\delta}{2-\delta}} \Vort\|_{L^2}  
		\| \Lambda^{s - \frac{\delta}{4-\delta}} \Vort\|_{L^2}
		\| \Lambda \Vort\|_{L^2}. 		
\end{align*}
Letting $\delta$ be such that
\[ 
\frac{\delta}{2-\delta} = \frac{\gamma}{2},
\] 
and interpolating, we further bound
\begin{align*}
{\mathcal T}
&\leq C \| \Lambda^{s + \frac{\gamma}{2}} \Vort\|_{L^2}^{2 - \alpha} \| \Lambda \Vort\|_{L^2}^{1+\alpha} 
\end{align*}
for a positive constant $C$ that depends on $\gamma, s$, and the size of the domain, where 
\begin{align*}
\alpha = \frac{ \gamma (6+\gamma)}{( 4 + \gamma)(2s- 2 + \gamma)} .
\end{align*}
We conclude the proof of the lemma with the $\eps$-Young inequality, and letting $q =  2(1+\alpha)/\alpha$.
\end{proof}

\section*{Acknowledgements}
The work of PC was supported in part by NSF grants DMS-1209394, DMS-1265132,  and DMS-1240743.
NGH gratefully acknowledges the support of the Institute for Mathematics and its Applications (IMA) at the University of Minnesota. 
The work of VV was supported in part by the NSF grant DMS-1211828.

\newcommand{\etalchar}[1]{$^{#1}$}

\end{document}